\DeclareFontFamily{U}{txsyc}{}
\DeclareFontShape{U}{txsyc}{m}{n}{
   <-> txsyc%
}{}
\DeclareFontShape{U}{txsyc}{bx}{n}{
   <-> txbsyc%
}{}
\DeclareFontShape{U}{txsyc}{l}{n}{<->ssub * txsyc/m/n}{}
\DeclareFontShape{U}{txsyc}{b}{n}{<->ssub * txsyc/bx/n}{}
\DeclareSymbolFont{symbolsC}{U}{txsyc}{m}{n}
\DeclareMathSymbol{\df}{\mathrel}{symbolsC}{"42}
\DeclareMathSymbol{\fd}{\mathrel}{symbolsC}{"43}
\DeclareMathSymbol{\lJoin}{\mathrel}{symbolsC}{"58}
\DeclareMathSymbol{\rJoin}{\mathrel}{symbolsC}{"59}
\newcommand{\cD}{{\cal D}}
\newcommand{\cP}{{\cal P}}
\newcommand{\cT}{{\cal T}}
\newcommand{\CC}{\mathbb{C}}
\newcommand{\LL}{\mathbb{L}}
\newcommand{\NN}{\mathbb{N}}
\newcommand{\PP}{\mathbb{P}}
\newcommand{\RR}{\mathbb{R}}
\newcommand{\ZZ}{\mathbb{Z}}
\newcommand{\iy}{\infty}
\newcommand{\lt}{\left}
\newcommand{\me}{\medskip}
\newcommand{\pa}{\partial}
\newcommand{\ri}{\rightarrow}
\newcommand{\rt}{\right}
\newcommand{\sm}{\smallskip}
\newcommand{\wi}{\widetilde}
\newcommand{\wit}{\widehat}
\newcommand{\lve}{\lt\vert}
\newcommand{\lVe}{\lt\Vert}
\newcommand{\rve}{\rt\vert}
\newcommand{\rVe}{\rt\Vert}
\newcommand{\vvv}{\vert\!\vert\!\vert}
\newcommand{\bq}{\begin{eqnarray*}}
\newcommand{\bqn}[1]{\begin{eqnarray}\label{#1}}
\newcommand{\eq}{\end{eqnarray*}}
\newcommand{\eqn}{\end{eqnarray}}
\newtheorem{thm}{Theorem}[section]
\newtheorem{lem}[thm]{Lemma}
\newtheorem{cor}[thm]{Corollary}
\newtheorem{prop}[thm]{Proposition}
\newtheorem{rem}[thm]{Remark}
\newtheorem{example}{Example}
\newenvironment{proof}{\paragraph{Proof} }{\hfill$\Box$\bigskip}
\theoremstyle{break}
\newtheorem{lembr}[thm]{Lemma}
\newcommand{\lin}{\llbracket}
\newcommand{\rin}{\rrbracket}
\renewcommand{\thepro}{\arabic{pro}}
\title{\bf On Dirichlet eigenvectors for neutral two-dimensional Markov chains}
\author{Nicolas Champagnat\footnote{TOSCA project-team, INRIA Nancy -- Grand Est, IECN, UMR
    7502, Nancy-Universit\'e, Campus Scientifique, B.P.\ 70239, 54506 Vand\oe uvre-l\`es-Nancy
    Cedex, France, E-mail: \texttt{Nicolas.Champagnat@inria.fr}}, Persi
  Diaconis\footnote{Department of Mathematic and Statistics, Sequoia Hall, 390 Serra Mall,
    Stanford University, Stanford, CA 94305-4065s}, Laurent Miclo\footnote{Institut de
    Math\'ematiques de Toulouse, Universit\'e Paul Sabatier, 118 route de Narbonne, F-31062
    Toulouse Cedex 9, France}}
\date{}
\begin{document}

\maketitle

\begin{abstract}
  We consider a general class of discrete, two-dimensional Markov chains modeling the dynamics
  of a population with two types, without mutation or immigration, and neutral in the sense
  that type has no influence on each individual's birth or death parameters. We prove that all
  the eigenvectors of the corresponding transition matrix or infinitesimal generator $\Pi$ can
  be expressed as the product of ``universal'' polynomials of two variables, depending on each
  type's size but not on the specific transitions of the dynamics, and functions depending
  only on the total population size. These eigenvectors appear to be Dirichlet eigenvectors
  for $\Pi$ on the complement of triangular subdomains, and as a consequence the corresponding
  eigenvalues are ordered in a specific way. As an application, we study the quasistationary
  behavior of finite, nearly neutral, two-dimensional Markov chains, absorbed in the sense
  that $0$ is an absorbing state for each component of the process.
\end{abstract}

\noindent\emph{MSC 2000 subject classification:} Primary: 60J10, 60J27; secondary: 15A18,
39A14, 47N30, 92D25.

\noindent\emph{Keywords and phrases:}  Hahn polynomials; two-dimensional
difference equation; neutral Markov chain; multitype population dynamics; Dirichlet
eigenvector; Dirichlet eigenvalue; quasi-stationary distribution; Yaglom limit; coexistence.

\section{Introduction}
\label{sec:intro}

This paper studies spectral properties of two-dimensional discrete Markov processes in continuous and discrete
time, having the \emph{neutrality} property, in the sense of population genetics (see
e.g.~\cite{kimura-83}). Considering two populations in interaction, corresponding to two different types of
individuals (typically a mutant type and a resident type), one says that the types are neutral (or the mutant
type is neutral w.r.t.\ the resident type, or more simply the mutation is neutral) if individuals of both
types are indistinguishable in terms of the total population dynamics. In other words, the mutant population
has no selective advantage (or disadvantage) with respect to the rest of the population.

More formally, we say that a two-dimensional Markov process $(X_n,Y_n)_{n\in {\cal T}}$ (where ${\cal
  T}=\ZZ_+ \df \{0,1,\ldots\}$ or $\RR_+$) with values in $\RR_+^2$ or $\ZZ_+^2$ is \textbf{neutral} if
$Z_n=X_n+Y_n$ is a Markov process. In particular, the law of the process $Z$ depends on $Z_0$, but not on
$X_0$ or $Y_0$.%  Although such a formal definition is not classical, it corresponds to the biological notion of
% neutrality.

If the process $Z$ is a birth and death continuous-time chain, the class of neutral processes we consider is
the following: the birth and death rates of the Markov process $(Z_t)_{t\in\RR_+}$ when $Z$ is in state $k\geq
0$ are of the form $k\lambda_k$ and $k\mu_k$, respectively. Note that $0$ is an absorbing state for $Z$. With
this notation, the parameters $\lambda_k$ and $\mu_k$ can be interpreted as birth and death rates \emph{per
  individual}. Then the process $(X_t,Y_t)_{t\in\RR_+}$ is the birth and death process where both types of
individuals have birth and death rates per individual $\lambda_k$ and $\mu_k$, when the total population has
$k$ individuals. This leads to the following transition rates for the Markov process $(X_t,Y_t)_{t\in\RR_+}$:
for all $(i,j)\in\ZZ_+^2$,
\begin{equation*}
  \begin{array}{lll}
    \mbox{from\ }(i,j)\mbox{\ to\ }(i+1,j) & \mbox{with rate} & i\:\lambda_{i+j} \\
    \mbox{from\ }(i,j)\mbox{\ to\ }(i,j+1) & \mbox{with rate} & j\:\lambda_{i+j} \\
    \mbox{from\ }(i,j)\mbox{\ to\ }(i-1,j) & \mbox{with rate} & i\:\mu_{i+j} \\
    \mbox{from\ }(i,j)\mbox{\ to\ }(i,j-1) & \mbox{with rate} & j\:\mu_{i+j}. \\
  \end{array}
\end{equation*}
Note that the sets $\{0\}\times\ZZ_+$, $\ZZ_+\times\{0\}$ and $\{(0,0)\}$ are absorbing for this process. In
other words, we only consider neutral two-dimensional processes \textbf{without mutation and
  immigration}.%  Note that other types of neutral two-dimensional models can be considered, like the neutral
% Wright-Fisher model with three types (which is a two-dimensional process since the total population size is
% constant). We are not going to study these models here.

In the case of discrete time, we consider two-dimensional birth and death processes constructed in a similar
way: assume that the birth and death probabilities of the process $(Z_n)_{n\in\ZZ_+}$ when in state $k$ are
$p_k$ and $q_k$, respectively, with $p_k+q_k\leq 1$. Then, when a birth or a death occurs in the population,
the individual to which this event applies is chosen uniformly at random in the population. This leads to the
transition probabilities
\begin{equation*}
%   \label{eq:neutral-trans}
  \begin{array}{lll}
    \mbox{from\ }(i,j)\mbox{\ to\ }(i+1,j) & \mbox{with probability} & \frac{i}{i+j}\:p_{i+j} \\
    \mbox{from\ }(i,j)\mbox{\ to\ }(i,j+1) & \mbox{with probability} & \frac{j}{i+j}\:p_{i+j} \\
    \mbox{from\ }(i,j)\mbox{\ to\ }(i-1,j) & \mbox{with probability} & \frac{i}{i+j}\:q_{i+j} \\
    \mbox{from\ }(i,j)\mbox{\ to\ }(i,j-1) & \mbox{with probability} & \frac{j}{i+j}\:q_{i+j} \\
    \mbox{from\ }(i,j)\mbox{\ to\ }(i,j) & \mbox{with probability} & r_k,
  \end{array}
\end{equation*}
where $r_k \df 1-p_k-q_k$. Note that this construction requires assuming that $r_0=1$ (i.e.\ that $0$ is
absorbing for $Z$).

In~\cite{karlin-mcgregor-75}, Karlin and McGregor studied two families of neutral multitype
population processes (branching processes and Moran model), but only in the case of nonzero
mutation or immigration, for which the set of states where one population (or more) is extinct
is not absorbing. They could express the eigenvectors of the corresponding infinitesimal
generators in terms of Hahn polynomials. We focus here on neutral processes without mutation
and immigration, which are singular for the approach of~\cite{karlin-mcgregor-75}, and we
apply our study to a much bigger class of neutral population processes, containing the birth
and death processes described above, but also non-birth and death models.

% More precisely we study extensions of the previous process to non-birth and death cases.
% We choose to present all our results in the discrete-time case. The way these results can be transposed to
% the continuous-time case is explained in remarks.
Our main result is the characterization of all eigenvalues and right eigenvectors of the transition matrix of
neutral processes without mutation and immigration. To this aim, we first consider the (easier) continuous state space case in
Section~\ref{sec:cont} to introduce some tools used in the sequel. Next, we construct a particular family of
polynomials of two variables in Section~\ref{sec:poly}, using linear algebra arguments. In
Section~\ref{sec:diago}, we prove that the eigenvectors of the transition matrix of neutral two-dimensional
Markov processes can be decomposed as the product of ``universal'' polynomials (in the sense that they do not
depend on the specific transition rates of the Markov chain) with functions depending only on the total
population size. % In particular, this spectral decomposition explains why these polynomials appeared
% in~\cite{champagnat-lambert-07} when computing first order expansions of survival probabilities in the
% neighborhood of neutrality.
We then relate these eigenvectors with Dirichlet eigenvalue problems in subdomains
of $\ZZ_+^2$ of the form $\{(i,j)\in\NN^2:i+j\geq k\}$ for $k\geq 2$ (Section~\ref{sec:Dirichlet}), where
$\NN=\{1,2,\ldots\}$.

The last section (Section~\ref{sec:QSD}) is devoted to the application of the previous results to the study of
quasi-stationary distributions. A probability distribution $\nu$ on $\ZZ_+^2\setminus\{0\}$ is called
\emph{quasi-stationary} if it is invariant conditionally on the non-extinction of the whole population, i.e.\
if
$$
\PP_\nu((X_1,Y_1)=(i,j)\mid Z_1\not=0)=\nu_{i,j},\quad\forall (i,j)\in\ZZ_+^2\setminus\{0\},
$$
where $\PP_\nu$ denotes the law of the process $(X,Y)$ with initial distribution $\nu$. This
question is
related to the notion of \emph{quasi-limiting distribution} (also called ``Yaglom limit'', in reference to
Yaglom's theorem on the same convergence for Galton-Watson processes), defined as
$$
\nu_{i,j} \df \lim_{n\rightarrow+\infty}\PP((X_n,Y_n)=(i,j)\mid Z_n\not=0),\quad\forall
(i,j)\in\ZZ_+^2\setminus\{0\}.
$$
These notions are relevant in cases where extinction occurs almost surely in finite time, to describe the
``stationary behaviour'' of the process before extinction when the extinction time is large. This is typically
the case in many population dynamics models, where ecological interactions in the population produce high
mortality only when the population size is large (one speaks of density-dependent models, see
e.g.~\cite{murray-93}, or~\cite{champagnat-lambert-07} for discrete stochastic models).

These questions have been extensively studied in the case where the transition matrix restricted to the
non-extinct states is irreducible (which is not true in our two-dimensional case). The first paper of
Darroch and Seneta~\cite{darroch-seneta-65} studies the discrete-time, finite case. Several extensions of
these results to continuous-time and/or infinite denumerable state spaces have then been considered
in~\cite{seneta-verejones-66,darroch-seneta-67,flaspohler-74}. The case of population dynamics in dimension 1
have been studied by many authors% , either from a general point of view
(e.g.~\cite{cavender-78,vandoorn-91,kijima-seneta-91,ferrari-martinez-al-92,kesten-95,hognas-97,nasell-99,gosselin-01}).% , or from
% the point of view of specific ecological models and motivations (e.g.~\cite{}).
More
recently, the quasi-stationary behaviour of one-dimensional diffusion models has been studied
in~\cite{cattiaux-collet-al-09}. As far as we know, the two-dimensional case has only been studied in the
continuous state space (diffusion) case~\cite{cattiaux-meleard-10}. An extensive bibliography
on quasi-stationary distributions can be found in~\cite{pollett-11}

In Subsection~\ref{sec:Yaglom-general}, we first give the quasi-limiting distribution for general finite
two-dimensional Markov chains in terms of the maximal Dirichlet eigenvalues of the transition matrix in
several subdomains. Finally, in Subsection~\ref{sec:Yaglom-neutral}, we apply our previous results  to prove
that coexistence in the quasi-limiting distribution is impossible for two-dimensional finite Markov chains
which are close to neutrality.  

The paper ends with a glossary of all the notation used in Sections~\ref{sec:diago}
to~\ref{sec:QSD}, which may appear at different places in the paper.

\section{Preliminary: continuous case}
\label{sec:cont}

In this section, we consider the continuous state space case, where computations are easier, in order to
introduce some of the tools needed in the discrete case.

Fix $p$ and $q$ two measurable functions from $\mathbb{R}_+$ to $\mathbb{R}_+$, and consider the system of
stochastic differential equations
\begin{subequations}
  \label{eq:SDE}
  \begin{equation}
    \label{eq:SDE-1}
      dX_t=\sqrt{2X_tp(X_t+Y_t)}dB^1_t+X_tq(X_t+Y_t)dt
  \end{equation}
  \begin{equation}
    \label{eq:SDE-2}
    dY_t=\sqrt{2Y_tp(X_t+Y_t)}dB^2_t+Y_tq(X_t+Y_t)dt,    
  \end{equation}
\end{subequations}
where $(B^1,B^2)$ is a standard two-dimensional Brownian motion. Such SDEs are sometimes called branching
diffusions, and are biologically relevant extensions of the classical Feller
diffusion~\cite{cattiaux-collet-al-09}. If $p$ and $q$ satisfy appropriate growth and regularity assumptions,
the solution to this system of SDEs is defined for all positive times and can be obtained as scaling limits of
two-dimensional birth and death processes (we refer to~\cite{cattiaux-collet-al-09} for the one-dimensional
case; the extension to higher dimensions is easy).

This process is neutral in the sense defined in the introduction since $Z_t=X_t+Y_t$ solves the SDE
\begin{equation}
  \label{eq:EDS-Z}
  dZ_t=Z_tq(Z_t)dt+\sqrt{2Z_tp(Z_t)}dB_t,
\end{equation}
where 
$$
B_t=\int_0^t\sqrt{\frac{X_s}{Z_s}}dB^1_s+\int_0^t\sqrt{\frac{Y_s}{Z_s}}dB^2_s
$$
is a standard Brownian motion. Note also that $\RR_+\times\{0\}$, $\{0\}\times\RR_+$ and $\{(0,0)\}$ are
absorbing states as soon as there is uniqueness in law for the system~(\ref{eq:SDE}).

For any $\varphi\in C^2(\RR_+^2)$, the infinitesimal generator $A$ of the process $(X_t,Y_t)_{t\geq 0}$ is
given by
\begin{align}
  A\varphi(x,y) & =\left(x\frac{\partial^2\varphi}{\partial x^2}(x,y)
    +y\frac{\partial^2\varphi}{\partial y^2}(x,y)\right)p(x+y) \notag \\
  & +\left(x\frac{\partial\varphi}{\partial x}(x,y)
    +y\frac{\partial\varphi}{\partial y}(x,y)\right)q(x+y).
  \label{eq:continu}
\end{align}
We first observe in the following proposition that $A$ admits a symmetric measure, but only on a subset of
$C^2(\RR_+^2)$.

\begin{prop}
  \label{prop:mu-SDE}
  Assume that $1/p$ and $q/p$ belong to $\LL^1_{\text{loc}}((0,+\infty))$. Let us define the measure
  $\mu$ on $\RR_+^2$ as
  \begin{equation}
    \label{eq:def-mu-SDE}
    \mu(dx,dy)=\frac{\exp\left(\int_{1}^{x+y}\:\frac{q(s)}{p(s)}ds\right)}{x\:y\:p(x+y)}\:dx\:dy.
  \end{equation}
  Then, the restriction $\wi{A}$ of the operator $A$ to $C^2_c((0,+\infty)^2)$ is symmetric for the
  canonical inner product $\langle\cdot,\cdot\rangle_\mu$ in $\LL^2(\RR_+^2,\mu)$, and, hence, so is its
  closure in $\LL^2(\RR^2_+,\mu)$.
\end{prop}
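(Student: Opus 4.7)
The natural approach is to put $A$ in divergence form with respect to $\mu$ and then apply integration by parts. Concretely, I would look for a diagonal ``diffusion tensor'' $\mathbf{a}(x,y)=\mathrm{diag}(a(x,y),b(x,y))$ such that
\begin{equation*}
 A\varphi(x,y)\ =\ \frac{1}{\rho(x,y)}\Bigl[\partial_x\bigl(\rho(x,y)\,a(x,y)\,\partial_x\varphi\bigr)+\partial_y\bigl(\rho(x,y)\,b(x,y)\,\partial_y\varphi\bigr)\Bigr],
\end{equation*}
where $\rho$ is the density of $\mu$ with respect to Lebesgue measure. Reading off the second-order part of $A$ forces $a(x,y)=xp(x+y)$ and $b(x,y)=yp(x+y)$, and the first-order part then dictates the compatibility conditions
\begin{equation*}
 \partial_x\bigl(\rho(x,y)\,xp(x+y)\bigr)=\rho(x,y)\,xq(x+y),\qquad \partial_y\bigl(\rho(x,y)\,yp(x+y)\bigr)=\rho(x,y)\,yq(x+y).
\end{equation*}

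The first step is thus to verify these two identities directly from the explicit formula for $\rho$. Writing $F(u)=\int_1^u q(s)/p(s)\,ds$, we have $\rho(x,y)\,xp(x+y)=e^{F(x+y)}/y$, whose $x$-derivative is $e^{F(x+y)}q(x+y)/(y\,p(x+y))=\rho(x,y)\,xq(x+y)$; the second identity is symmetric. The hypothesis $1/p,\,q/p\in L^1_{\text{loc}}((0,+\infty))$ is exactly what is needed to guarantee that $F$, hence $\rho$, is well-defined and locally integrable on $(0,+\infty)^2$, so that $\mu$ is a genuine Radon measure on the open quadrant and $L^2(\RR_+^2,\mu)$ contains $C^2_c((0,+\infty)^2)$.

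Second, for $\varphi,\psi\in C^2_c((0,+\infty)^2)$, I would integrate by parts twice. Because both test functions are supported in a compact subset of the open quadrant, no boundary terms arise and one obtains
\begin{equation*}
 \langle A\varphi,\psi\rangle_\mu\ =\ -\int_{(0,+\infty)^2}\bigl[xp(x+y)\,\partial_x\varphi\,\partial_x\psi+yp(x+y)\,\partial_y\varphi\,\partial_y\psi\bigr]\,\rho(x,y)\,dx\,dy,
\end{equation*}
an expression manifestly symmetric in $(\varphi,\psi)$. This proves that $\widetilde A$ is symmetric.

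Finally, symmetry of the closure follows from the abstract fact that any symmetric densely defined operator is closable and its closure remains symmetric; the density of $C^2_c((0,+\infty)^2)$ in $L^2(\RR_+^2,\mu)$ is standard (truncate and mollify, using local integrability of $\rho$). No step here is genuinely hard; the only subtlety is bookkeeping the boundary, which is why the symmetry is only asserted for test functions vanishing near the axes. The potential issue one should keep in mind — and which will be relevant in later sections — is that $\mu$ may fail to be finite or even $\sigma$-finite near the axes or at infinity, so $\widetilde A$ is in general only symmetric, not self-adjoint on its minimal domain.
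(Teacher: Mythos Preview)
Your proposal is correct and follows essentially the same route as the paper: the paper computes $\langle f,Ag\rangle_\mu$ by integration by parts, reads off the two compatibility conditions $\partial_x(x\mu p)=xq\mu$ and $\partial_y(y\mu p)=yq\mu$, and then checks them directly from the explicit formula for $\mu$---exactly your divergence-form conditions with $\rho=\mu$. Your presentation is slightly more streamlined (you verify the conditions first via $\rho\,xp=e^{F}/y$ and then display the symmetric Dirichlet form), but the argument is the same.
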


Note that, because of the singularities in $\mu$ when $x$ or $y$ vanish, if $p\geq c>0$ in the
neighborhood of 0, any continuous function in $\LL^2(\RR_+^2,\mu)$ must vanish at the boundary of
$\RR_+^2$. Therefore, $\LL^2(\RR_+^2,\mu)\subset\LL^{2,0}_{\text{loc}}(\RR_+^2)$, where
$\LL^{2,0}_{\text{loc}}(\RR_+^2)$ is
%  the set of $\LL^2_{\text{loc}}(\RR_+^2)$ functions vanishing at the
% boundary of $\RR_+^2$ (formally
defined as the closure of $C_c((0,+\infty)^2)$ in
$\LL^2_{\text{loc}}(\RR_+^2)$.%\:).

\begin{proof}
For all $f,g\in C^2_c((0,+\infty)^2)$, we have (formally)
\begin{align*}
  \langle f,Ag\rangle_\mu & =-\int_{\RR_+^2}p(x+y)\left(x\frac{\partial f}{\partial x}(x,y)\frac{\partial
      g}{\partial y}(x,y)+y\frac{\partial f}{\partial y}(x,y)\frac{\partial
      g}{\partial y}(x,y)\right)\mu(x,y)\:dx\:dy \\
  & -\int_{\RR_+^2}f(x,y)\left(\frac{\partial(x\mu p)}{\partial x}(x,y)\frac{\partial g}{\partial x}(x,y)
    +\frac{\partial(y\mu p)}{\partial y}(x,y)\frac{\partial g}{\partial y}(x,y)\right)\:dx\:dy \\
  & +\int_{\RR_+^2}f(x,y)q(x+y)\left(x\frac{\partial g}{\partial x}(x,y)
    +y\frac{\partial g}{\partial y}(x,y)\right)\mu(x,y)\:dx\:dy.
\end{align*}
Therefore, $\langle f,Ag\rangle_\mu=\langle Af,g\rangle_\mu$ if
$$
xq(x+y)\mu(x,y)=\frac{\partial (x\mu p)}{\partial x}(x,y),\quad\forall x,y>0
$$
and
$$
yq(x+y)\mu(x,y)=\frac{\partial (y\mu p)}{\partial y}(x,y),\quad\forall x,y>0.
$$
Conversely, these equalities can be directly checked from the formula~(\ref{eq:def-mu-SDE}), which implies
that $\partial(x\mu p)/\partial x$ and $\partial(y\mu p)/\partial y$ exist in a weak sense.
\end{proof}

Before studying the eigenvectors of $A$, we need the following result.
\begin{prop}
  \label{prop:poly-ortho}
  For all $\lambda\in\RR$, the problem
  \begin{equation}
    \label{eq:ODE-h}
    (1-x^2)h''(x)=-\lambda h(x)
  \end{equation}
  has no (weak) non-zero solution $h\in C^1([-1,1])$ except when $\lambda=d(d-1)$ for some $d\in\NN$. For
  $d=1$, the vector space of solutions has dimension 2 and is spanned by the two polynomials $h(x)=1$ and
  $h(x)=x$.  For all $d\geq 2$,
  \begin{equation}
    \label{eq:ODE-Hd}
    (1-x^2)h''(x)+d(d-1)h(x)=0
  \end{equation}
  has a one-dimensional vector space of solutions in $C^1([-1,1])$, spanned by a polynomial $H_d$ of degree
  $d$, which can be chosen such that the family $(H_d)_{d\geq 2}$ is an orthonormal basis of
  $\LL^2([-1,1],\frac{dx}{1-x^2})$. In addition,
  % all the roots of $H_d$ belong to $[-1,1]$ and are simple, the
  % interlacing property holds for roots in $(-1,1)$, and
  $H_d$ has the same parity as $d$ (all powers of $x$ that appear are even if $d$ is even and
  odd if $d$ is odd).
\end{prop}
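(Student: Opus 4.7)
The plan is to first construct the polynomial solutions via a power-series ansatz, then settle all remaining claims with an orthogonality/density argument in $\LL^2([-1,1],dx/(1-x^2))$. Inserting $h(x)=\sum_{k\geq 0}a_k x^k$ into the ODE and matching coefficients gives the two-term recurrence
\begin{equation*}
  (k+2)(k+1)\,a_{k+2}=\bigl(k(k-1)-\lambda\bigr)\,a_k,\qquad k\geq 0,
\end{equation*}
which decouples even and odd indices. A non-trivial polynomial solution thus arises if and only if $\lambda=d(d-1)$ for some $d\in\NN$, and it then has degree exactly $d$ and the parity of $d$. For $d=1$ (i.e.\ $\lambda=0$) the equation reduces to $h''=0$ on $(-1,1)$, giving the two-dimensional solution space spanned by $1$ and $x$.

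For $\lambda\neq 0$, I would show that any $C^1$ solution on $[-1,1]$ must vanish at $\pm 1$: if $h(\pm 1)\neq 0$ the ODE forces $h''(x)\sim -\lambda h(\pm 1)/((1-x)(1+x))$ near the endpoint, whose antiderivative contains a $\log(1\mp x)$ term, contradicting the continuity of $h'$. Since $h(\pm 1)=0$ then implies $|h(x)|^2/(1-x^2)=O(1-|x|)$, we get $h\in\LL^2([-1,1],dx/(1-x^2))$. Using the ODE for $h$ and integrating by parts yields $\lambda\int_{-1}^{1} h H_d\,dx/(1-x^2)=\int_{-1}^{1} h' H_d'\,dx$; similarly, using the ODE for $H_d$ and integrating by parts the other way, $d(d-1)\int_{-1}^{1} h H_d\,dx/(1-x^2)=\int_{-1}^{1} h' H_d'\,dx$, the boundary terms vanishing because $h(\pm 1)=H_d(\pm 1)=0$. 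Thus $h\perp H_d$ in $\LL^2(dx/(1-x^2))$ whenever $\lambda\neq d(d-1)$.

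It remains to prove that $\Vect\{H_d:d\geq 2\}$ is dense in $\LL^2([-1,1],dx/(1-x^2))$. Writing $H_d=(1-x^2)P_d$ with $P_d$ a polynomial of degree $d-2$ (possible because $H_d(\pm 1)=0$), and observing that the multiplication map $g\mapsto(1-x^2)g$ is an isometry of $\LL^2([-1,1],(1-x^2)\,dx)$ onto $\LL^2([-1,1],dx/(1-x^2))$, Weierstrass density of polynomials in the source space transfers to density of $\Vect\{H_d:d\geq 2\}$ in the target. Combined with the orthogonality identity this forces $h=0$ whenever $\lambda\notin\{d(d-1):d\in\NN\}$, and when $\lambda=d(d-1)$ with $d\geq 2$ it forces the $C^1$ solution space to be exactly $\RR H_d$; specialising the same identity to $h=H_{d'}$ gives mutual orthogonality of the $H_d$'s, so after normalisation they form an orthonormal basis. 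The main obstacle I expect is the endpoint analysis, above all establishing $h(\pm 1)=0$ via the logarithmic-derivative argument in the weak-solution setting of the proposition, and cleanly setting up the isometry/density step.
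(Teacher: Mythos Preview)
Your proposal is correct and follows essentially the same route as the paper: the two-term recurrence for polynomial solutions, endpoint vanishing $h(\pm 1)=0$ for $\lambda\neq 0$ via the logarithmic blow-up of $h'$, orthogonality through integration by parts, and completeness via polynomial density (the paper phrases the last step as ``$H_d/(1-x^2)$ is a polynomial of degree $d-2$, so $\int_{-1}^1 h\,P\,dx=0$ for all polynomials $P$'', which is exactly your isometry argument unwound). The one substantive difference is in establishing that the $C^1$ solution space is one-dimensional for $d\geq 2$: the paper does this directly by a Wronskian computation (given two $C^1$ solutions, $W=h_1h_2'-h_1'h_2$ satisfies $W'\equiv 0$ on $(-1,1)$ and $W(\pm 1)=0$ from the Dirichlet conditions, hence $W\equiv 0$), whereas you obtain it a posteriori from the completeness of $(H_d)_{d\geq 2}$. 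Both arguments are valid; the Wronskian route is slightly more self-contained since it does not presuppose the orthonormal basis, while yours economises by reusing the density step.
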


\begin{proof}
The case $\lambda=0$ is trivial, so let us assume that $\lambda\not=0$. Note that, assuming $\lambda\not=0$
implies that any solution $h\in C^1([-1,1])$ satisfying~(\ref{eq:ODE-h}) must satisfy
$h(-1)=h(1)=0$. Indeed, the function $h(x)/(1-x^2)$ has a finite integral over $[-1,1]$ in
that case only. The
equation has the form of the classical Sturm-Liouville problem, but does not satisfy the usual integrability
conditions, since $(1-x^2)^{-1}$ is not integrable on $[-1,1]$.

First, uniqueness in $C^1([-1,1])$ for~(\ref{eq:ODE-h}) can be proved using the classical Wronskian
determinant: given two solutions $h_1$ and $h_2$ of~(\ref{eq:ODE-h}) in $C^1([-1,1])$, set
$W(x) \df h_1(x)h'_2(x)-h'_1(x)h_2(x)$. Since $h_i(-1)=h_i(1)=0$ for $i=1,2$, we have $W(-1)=W(1)=0$. Moreover,
one has $W'(x)=0$ a.e.\ in $[-1,1]$ and thus $W(x)=0$ on $[-1,1]$. Thus $(h_1(0),h'_1(0))$ and
$(h_2(0),h'_2(0))$ are linearly dependent, and so $h_1$ and $h_2$ as well.

Second, assuming that~(\ref{eq:ODE-h}) has a polynomial solution of degree $d$, the comparison of the higher
degree terms yields $\lambda=d(d-1)$. So assume that $\lambda=d(d-1)$ with $d\geq 2$. The polynomial
\begin{equation}
  \label{eq:etoile}
  h(x)=\sum_{0\leq k\leq d,\ d-k\text{\ even}}a_{d,k}x^k,  
\end{equation}
where $a_{d,d}\not=0$ and
$$
(k(k-1)-d(d-1))a_{d,k}=(k+2)(k+1)a_{d,k+2},\quad\forall 0\leq k\leq d-2,\quad d-k\text{\ even},
$$
is a nonzero solution of~(\ref{eq:ODE-Hd}). Since $h(1)=h(-1)=0$, we can choose $a_{d,d}$ to define the
polynomial $H_d \df h$ satisfying $\|H_d\|^2_{\rho}=1$, where $\rho(dx)=(1-x^2)^{-1}dx$ and $\|\cdot\|_\rho$ is
the norm corresponding to the inner product $\langle\cdot,\cdot\rangle_\rho$.

Third, let $h\in C^2([-1,1])$ be a solution to~(\ref{eq:ODE-h}) for some $\lambda\not=0$. Then $h(x)/(1-x^2)$
is continuous on $[-1,1]$, and for any $d\geq 2$,
\begin{equation}
  \label{eq:pf-ortho}
  \begin{aligned}
    \int_{-1}^1\frac{h(x)}{1-x^2}H_d(x)dx & =\frac{-1}{d(d-1)}\int_{-1}^1h(x)H_d''(x)dx \\ &
    =\frac{\lambda}{d(d-1)}\int_{-1}^1\frac{h(x)}{1-x^2}H_d(x)dx,
  \end{aligned}
\end{equation}
where the integration by parts is justified by the Dirichlet conditions for $h$ and $H_d$. Therefore, either
$\lambda=d(d-1)$ for some $d\geq 2$, and then $f$ and $H_d$ are linearly dependent by the previous uniqueness
result, or
$$
\langle h,H_d\rangle_\rho=\int_{-1}^{1}h(x)\frac{H_d(x)}{1-x^2}\:dx=0,\quad \forall d\geq 2,
$$
where $\{H_d(x)/(1-x^2)\}_{d\geq 2}$ is a family of polynomials of all nonnegative degrees. Therefore,
$\int_{-1}^1 h(x)P(x)dx=0$ for all polynomial $P$, and hence $h=0$.

The fact that the polynomials $\{H_d\}_{d\geq 2}$ are orthogonal w.r.t.\ the inner product
$\langle\cdot,\cdot\rangle_\rho$ follows from~(\ref{eq:pf-ortho}) with $h=H_{d'}$ for $d'\not=d$.

Finally, assume that $h\in\LL^2([-1,1],\rho)$ satisfies $\langle h,H_d\rangle_\rho=0$ for all $d\geq 2$.  As
above, we then have
$$
\int_{-1}^1 h(x)P(x)\:dx=0
$$
for all polynomial $P$. Since $h\in\LL^2([-1,1],\rho)\subset\LL^2([-1,1],dx)$, such relations imply that $h=0$
a.e., which ends the proof that $\{H_d\}_{d\geq 2}$ is an orthonormal basis of $\LL^2([-1,1],\rho)$.
% The statement about the zeros of $H_d$ are classical properties of orthonormal polynomials. 
The parity property of $H_d$ follows from~(\ref{eq:etoile}).
\end{proof}

\begin{rem}
  \label{rem:Gegenbauer}
  The polynomials $H_d$ may be obtained as a limit case of Gegenbauer polynomials, defined for
  a parameter $\lambda>-1/2$ as (up to a multiplicative constant)
  $$
  H^\lambda_d(x)=\sqrt{B\Big(\frac{1}{2},\lambda+\frac{1}{2}\Big)\frac{2(2\lambda+1)_d(d+\lambda)}{d!(d+2\lambda)}}{_2}F_1\biggl(\begin{matrix}-d,\
    d+2\lambda\\\lambda+\frac{1}{2}\end{matrix};\frac{1+x}{2}\biggr),
  $$
  for all $d\geq 0$, where $B$ is the Beta function, ${_a}F_b$ are the classical
  hypergeometric series and $(x)_n:=x(x+1)\ldots(x+n-1)$ is the shifted factorial. The
  normalizing constant has been chosen such that these polynomials form an orthonormal family
  in $\mathbb{L}^2((-1,1),(1-x^2)^{\lambda-1/2}dx)$. Then, for all $d\geq 2$, $H_d$ may be
  obtained as
  \begin{equation}
    \label{eq:Gegen}
    H_d(x)=\lim_{\lambda\rightarrow -1/2}H^\lambda_d(x)
    =\sqrt{\frac{2d-1}{d(d-1)}}\sum_{k=1}^d\frac{(-d)_k(d-1)_k}{k!(k-1)!}\;\frac{(1+x)^k}{2^{k-1}}.
  \end{equation}
  One easily checks from this formula that $H_d$ solves~(\ref{eq:ODE-Hd}), and the fact that
  $\int_{-1}^1H_d(x)^2(1-x^2)^{-1}dx=1$ is obtained applying the limit
  $\lambda\rightarrow-1/2$ in $\int_{-1}^1 H^\lambda_d(x)^2 (1-x^2)^{\lambda-1/2}dx=1$.
\end{rem}

We now introduce the change of variables
\begin{equation}
  \label{eq:var-z-w}
  z=x+y\in\RR_+\quad\mbox{and}\quad w=\frac{x-y}{x+y}\in[-1,1],
\end{equation}
which defines a $C^\infty$-diffeomorphism from $\RR_+^2\setminus\{0\}$ to $(0,+\infty)\times[-1,1]$. Then
$\mu(dx,dy)=\nu(dz,dw)$, where
$$
\nu(dz,dw) \df \frac{2\exp\left(\int_{1}^{z}\:\frac{q(s)}{p(s)}ds\right)}{z(1-w^2)p(z)}\:dz\:dw,
$$
and
\begin{equation}
  \label{eq:L-CV}
  A=\frac{1-w^2}{z}p(z)\frac{\partial^2}{\partial w^2}+zp(z)\frac{\partial^2}{\partial
    z^2}+zq(z)\frac{\partial}{\partial z}.  
\end{equation}
% Moreover, for all $f\in C^2_c((0,\infty)^2)$, we have
% \begin{align*}
%   \langle f,Af\rangle_\mu= & -2\int_{-1}^1\int_0^\infty\left(\frac{\partial f}{\partial z}(z,w)\right)^2
%   \frac{\exp\left(\int_{1}^{z}\:\frac{q(s)}{p(s)}ds\right)}{(1-w^2)}\:dz\:dw \\
%   & -2\int_{-1}^1\int_0^\infty\left(\frac{\partial f}{\partial w}(z,w)\right)^2
%   \frac{\exp\left(\int_{1}^{z}\:\frac{q(s)}{p(s)}ds\right)}{z^2}\:dz\:dw.
% \end{align*}
% This immediately implies that all the eigenvalues of $A$ with eigenvector in $\LL^2(\RR_+^2,\mu)$ are real and
% nonpositive.
% \footnote{Since $A$ is the generator of a Feller process, it is dissipative (see
%   e.g.~\cite{ethier-kurtz-86}) and hence all its eigenvalues with eigenvectors belonging to the domain of the
%   closure of $A$ must be nonpositive.}.

Now, assume that $A$ has an eigenvector of the form $\varphi(w)\psi(z)$. If $\lambda$ is the corresponding
eigenvalue,~(\ref{eq:L-CV}) yields
$$
\varphi(w)\Big(zp(z)\psi''(z)+zq(z)\psi'(z)-\lambda\psi(z)\Big)+(1-w^2)\varphi''(w)\frac{p(z)}{z}\psi(z)=0.
$$
Hence, we have the following result.
\begin{prop}
  \label{prop:eigen-vect-cont}
  All functions on $\RR_+^2$ of the form
  \begin{equation}
    \label{eq:eigenv-A}
    H_d\Big(\frac{x-y}{x+y}\Big)\psi(x+y),    
  \end{equation}
  where $d\geq 0$, $H_1(x)=x$, $H_0(x)=1$, $H_k$, $k\geq 2$ are as in
  Proposition~\ref{prop:poly-ortho}, where $\psi$ satisfies
  \begin{equation}
    \label{eq:EDO-vp-2}
    zp(z)\psi''(z)+zq(z)\psi'(z)-d(d-1)\frac{p(z)}{z}\psi(z)=\lambda\psi(z),\quad\forall z\geq 0,
  \end{equation}
  for some $\lambda\in\RR$, are eigenfunctions of $A$ for the eigenvalue $\lambda$.
\end{prop}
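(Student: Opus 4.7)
The plan is to verify the claim by direct substitution, using separation of variables in the coordinates $(z,w)$ introduced in~(\ref{eq:var-z-w}). Since the expression~(\ref{eq:L-CV}) for $A$ in these coordinates has already been established, and since the computation displayed just before the proposition reduces the eigenvalue equation for a product $\varphi(w)\psi(z)$ to
$$
\varphi(w)\Big(zp(z)\psi''(z)+zq(z)\psi'(z)-\lambda\psi(z)\Big)+(1-w^2)\varphi''(w)\frac{p(z)}{z}\psi(z)=0,
$$
essentially all the work has been done. The proof is then just a matter of substituting $\varphi = H_d$ and observing that the angular piece collapses onto the radial ODE.

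The key observation is that for every $d\geq 0$, the function $H_d$ satisfies
$$
(1-w^2)H_d''(w)+d(d-1)H_d(w)=0.
$$
For $d\geq 2$ this is precisely~(\ref{eq:ODE-Hd}), while for $d=0$ and $d=1$ it holds trivially since $H_0''=H_1''=0$ and $d(d-1)=0$. Substituting this identity into the displayed equation above, the factor $\varphi(w)=H_d(w)$ can be pulled out of every term, leaving
$$
H_d(w)\left[zp(z)\psi''(z)+zq(z)\psi'(z)-\frac{d(d-1)p(z)}{z}\psi(z)-\lambda\psi(z)\right]=0,
$$
which holds for all $w$ and $z$ whenever $\psi$ satisfies the ODE~(\ref{eq:EDO-vp-2}). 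Reverting the change of variables via $w=(x-y)/(x+y)$ and $z=x+y$ yields the conclusion.

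There is essentially no obstacle: the only point requiring any care is the unified treatment of the three cases $d=0$, $d=1$, and $d\geq 2$, which is handled cleanly by the observation above. One may also wish to note that the formula~(\ref{eq:eigenv-A}) is well defined on all of $\RR_+^2\setminus\{0\}$ (and extends continuously to $(0,0)$ for polynomial $\psi$), since the change of variables~(\ref{eq:var-z-w}) is a diffeomorphism away from the origin; the sole assumption needed on $\psi$ is enough regularity (say $C^2$ on $[0,+\infty)$) to apply $A$ to the product.
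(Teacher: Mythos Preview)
Your proof is correct and follows exactly the approach of the paper: the paper does not even give a separate proof of this proposition, but simply writes ``Hence, we have the following result'' after displaying the separated equation you quote. Your only addition is the explicit verification that $(1-w^2)H_d''(w)+d(d-1)H_d(w)=0$ also holds trivially for $d=0,1$, which is a helpful clarification.
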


Now, we proceed to prove that there are no other eigenvectors of $A$ (in the sense of
Theorem~\ref{thm:eigen-SDE} below). Such a result seems natural as~(\ref{eq:EDO-vp-2}) can be written as the
Sturm-Liouville problem
$$
\frac{d}{dz}\left(\psi'(z)\exp\int_1^z\frac{q(s)}{p(s)}ds\right)
-d(d-1)\frac{\exp\int_1^z\frac{q(s)}{p(s)}ds}{z^2}\psi(z)
=-\lambda\frac{\exp\int_1^z\frac{q(s)}{p(s)}ds}{zp(z)}\psi(z).
$$
Here again, the usual integrability conditions are not satisfied. More precisely, if $p(z)\geq c>0$ for $z$ in
the neighborhood of 0 and $q$ is bounded, using the terminology of~\cite{zettl-05}, for all $d\geq 0$, the
problem~(\ref{eq:eigenv-A}) is a singular self-adjoint boundary value problem on $(0,\infty)$, where the
endpoint 0 is LP (limit point) singular (see~\cite[Thm.\:7.4.1]{zettl-05}). In this case very little is known
about the existence of an orthonormal basis of eigenvectors in $\LL^2((0,\infty),\wi{\nu})$, where
\begin{equation}
  \label{eq:def-tilde-nu}
  \wi{\nu}(z)=\frac{2\exp\left(\int_1^z\frac{q(s)}{p(s)}ds\right)}{zp(z)}  
\end{equation}
(the spectrum might even be continuous, see~\cite[Thm.\:10.12.1(8)]{zettl-05}).

For this reason, we state our next result on the operator $A$ with a restricted domain corresponding to the
case where the diffusion is reflected in the set
$$
D \df \{(x,y)\in\RR_+^2:a\leq x+y\leq b\},
$$
where $0<a<b<\infty$. For all initial condition $(X_0,Y_0)=(x_0,y_0)\in D$, we consider the process
$(X_t,Y_t,k^{(a)}_t,k^{(b)}_t)_{t\geq 0}$ such that $k^{(a)}_0=k^{(b)}_0=0$, $k^{(a)}$ and $k^{(b)}$ are
nondecreasing processes, $(X_t,Y_t)\in D$ for all $t\geq 0$,
\begin{gather*}
  k^{(\alpha)}_t=\int_0^t\mathbbm{1}_{\{X_s+Y_s=\alpha\}}dk^{(\alpha)}_s,\quad\forall t \geq 0,\quad \alpha=a,b \\
  \begin{aligned}
    dX_t & =\sqrt{2X_tp(X_t+Y_t)}dB^1_t+X_tq(X_t+Y_t)dt-\sqrt{2}^{-1}dk^{(b)}_t+\sqrt{2}^{-1}dk^{(a)}_t \\
    dY_t & =\sqrt{2Y_tp(X_t+Y_t)}dB^2_t+Y_tq(X_t+Y_t)dt-\sqrt{2}^{-1}dk^{(b)}_t+\sqrt{2}^{-1}dk^{(a)}_t.
  \end{aligned}
\end{gather*}
Then $Z_t=X_t+Y_t$ is the solution of~(\ref{eq:EDS-Z}) reflected at $a$ and $b$ with local time $k_t^{(a)}$ at
$a$ (resp.\ $k^{(b)}_t$ at $b$). Therefore, $(X_t,Y_t)$ is also neutral in the sense of the introduction.

The corresponding infinitesimal generator is defined by~(\ref{eq:continu}) with domain the set of
$\varphi(x,y)\in C^1(D)\cap C^2(\text{int}(D))$, where $\text{int}(D)$ denotes the interior of $D$, such that
$$
\frac{\partial\varphi}{\partial x}(x,y)+\frac{\partial\varphi}{\partial y}(x,y)=0,\quad\forall (x,y)\in D
\text{\ s.t.\ }x+y=a\text{\ or\ }b.
$$

\begin{thm}
  \label{thm:eigen-SDE}
  For $0<a<b<\infty$, assume that $p\geq c>0$ on $[a,b]$ and $q/p$ belong to $\LL^1([a,b])$.
%   $$
%   \exp\left(-\int_1^z\frac{q(s)}{p(s)}ds\right),\ \exp\left(\int_1^z\frac{q(s)}{p(s)}ds\right),\
%   \frac{\exp\left(\int_1^z\frac{q(s)}{p(s)}ds\right)}{p(z)}\ \in\LL^1([a,b]).
%   $$
  \begin{description}
  \item[\textmd{(a)}] There exists a denumerable orthonormal basis of $\LL^2(D,\mu)$ of eigenvectors of $A$ of
    the form~(\ref{eq:eigenv-A}), where $d\geq 2$ and $\psi$ solves~(\ref{eq:EDO-vp-2}) on $(a,b)$ and
    satisfies $\psi'(a)=\psi'(b)=0$. Moreover, any eigenvector of $A$ in $\LL^2(D,\mu)$ is a linear
    combination of eigenvectors of the form~(\ref{eq:eigenv-A}), all corresponding to the same eigenvalue.
  \item[\textmd{(b)}] There exists a family of right eigenvectors of $A$ of the form
    \begin{multline}
      \{1\}\cup\Big\{\frac{x}{x+y}\psi_k(x+y),\:
      \frac{y}{x+y}\psi_k(x+y)\Big\}_{k\geq 1}\cup \\ 
      \bigcup_{d\geq 2}\Big\{H_d\Big(\frac{x-y}{x+y}\Big)\psi^{(d)}_k(x+y)\Big\}_{k\geq 1}, \label{eq:basis}
    \end{multline}
    which is a basis of the vector space
    \begin{multline}
      V \df \Big\{f\in\LL^2(D,\textup{Leb}):\exists f_1,f_2\in\LL^2([a,b],\textup{Leb})\text{\ and\
      }f_3\in\LL^2(D,\mu)\text{\ s.t.\ }
      \\ f(x,y)=\frac{x}{x+y}f_1(x+y)+\frac{y}{x+y}f_2(x+y)+f_3(x,y)\Big\}, \label{eq:def-V-cont-case}
    \end{multline}
    where $\textup{Leb}$ denotes Lebesgue's measure. More precisely, for all $f\in V$, the functions
    $f_1,f_2,f_3$ in~(\ref{eq:def-V-cont-case}) are unique and there exists unique sequences
    $\{\alpha_k\}_{k\geq 1}$, $\{\beta_k\}_{k\geq 1}$, and $\{\gamma_{dk}\}_{d\geq 2,\ k\geq 1}$ such that
    \begin{multline}
      f(x,y)=\sum_{k\geq 1}\alpha_k\frac{x}{x+y}\psi_k(x+y)+\sum_{k\geq 1}\beta_k\frac{y}{x+y}\psi_k(x+y) \\
      +\sum_{d\geq 2\ k\geq 1}\gamma_{dk}H_d\Big(\frac{x-y}{x+y}\Big)\psi^{(d)}_k(x+y),
      \label{eq:basis-V}
    \end{multline}
    where the series $\sum_k\alpha_k\psi_k$ and $\sum_k\beta_k\psi_k$ both converge for
    $\|\cdot\|_{\wi{\nu}}$ and $\sum_{d,k}\gamma_{dk}H_d(\frac{x-y}{x+y})\psi^{(d)}_k(x+y)$ converges for
    $\|\cdot\|_\mu$.
  \end{description}
\end{thm}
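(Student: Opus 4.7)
My strategy is to use the change of variables (\ref{eq:var-z-w}) to turn $A$ into a separable operator on the rectangle $[a,b]\times[-1,1]$ and then invoke classical regular Sturm-Liouville theory on $[a,b]$ for each $d$ separately. Under $(x,y)\mapsto(z,w)$, the domain $D$ becomes $[a,b]\times[-1,1]$, the measure factorises as $\nu(dz,dw)=\wi{\nu}(z)\,(1-w^2)^{-1}\,dz\,dw$ (cf.\ (\ref{eq:def-tilde-nu})), and (\ref{eq:L-CV}) shows that $A$ acts diagonally with respect to the tensor decomposition $\LL^2(D,\mu)\simeq\LL^2([a,b],\wi{\nu})\otimes\LL^2([-1,1],\rho)$. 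A short computation using $\partial_x z+\partial_y z=2$ and $\partial_x w+\partial_y w=0$ shows that for any smooth $\psi,\varphi$, the reflection condition $(\partial_x+\partial_y)(\varphi\psi)=0$ at $z=a,b$ becomes exactly $\psi'(a)=\psi'(b)=0$, so the boundary condition of $A$ is a Neumann condition in the $z$ variable alone and places no constraint in the $w$ variable beyond square integrability against $\rho$.

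For existence in part (a), I fix $d\geq 2$ and consider the one-dimensional operator $A_d\psi(z)=zp(z)\psi''(z)+zq(z)\psi'(z)-d(d-1)p(z)\psi(z)/z$ with Neumann boundary conditions on $[a,b]$. Thanks to the hypotheses $p\geq c>0$ and $q/p\in\LL^1([a,b])$, this is a regular self-adjoint Sturm-Liouville problem on a compact interval (the weights $\wi{\nu}$, $1/(zp)$ and the potential $d(d-1)p/z$ are all bounded and bounded away from $0$). Classical theory then provides a denumerable orthonormal basis $(\psi_k^{(d)})_{k\geq 1}$ of $\LL^2([a,b],\wi{\nu})$ of eigenvectors of $A_d$ satisfying the Neumann conditions. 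Combining with the orthonormal basis $(H_d)_{d\geq 2}$ of $\LL^2([-1,1],\rho)$ from Proposition~\ref{prop:poly-ortho}, the tensor products $H_d(w)\psi_k^{(d)}(z)$ form an orthonormal basis of $\LL^2(D,\mu)$, lie in the domain of $A$ by the boundary computation above, and are eigenvectors of $A$ by Proposition~\ref{prop:eigen-vect-cont}. For uniqueness in (a), if $\Phi\in\LL^2(D,\mu)$ is an eigenvector for $\lambda$, I expand $\Phi(x,y)=\sum_{d\geq 2}H_d(w)g_d(z)$ in the $H_d$ basis with $g_d\in\LL^2([a,b],\wi{\nu})$; applying $A$ and using the density/orthogonality of the $H_d$'s (together with Proposition~\ref{prop:poly-ortho} to kill any leftover $\LL^2(\rho)$ contribution) gives $A_d g_d=\lambda g_d$ for every $d$, so $g_d$ lies in the $\lambda$-eigenspace of $A_d$, yielding a linear combination of eigenvectors of the announced form at the single eigenvalue $\lambda$.

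For part (b), I treat the missing degrees $d=0$ and $d=1$, where equation (\ref{eq:EDO-vp-2}) degenerates to $zp(z)\psi''+zq(z)\psi'=\lambda\psi$ (no potential). I call $(\psi_k)_{k\geq 1}$ the orthonormal basis of $\LL^2([a,b],\wi{\nu})$ produced by the same regular Sturm-Liouville argument (Neumann conditions at $a,b$), and note that since $H_0=1$ and $H_1(w)=w$, the functions $\frac{x}{x+y}=\frac{1+w}{2}$ and $\frac{y}{x+y}=\frac{1-w}{2}$ are linear combinations of $H_0,H_1$; hence $\frac{x}{x+y}\psi_k(z)$ and $\frac{y}{x+y}\psi_k(z)$ are eigenvectors of $A$ by Proposition~\ref{prop:eigen-vect-cont}. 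They fail to belong to $\LL^2(D,\mu)$ because $1$ and $w$ are not in $\LL^2([-1,1],\rho)$, but they do belong to $V$. For the decomposition of $f\in V$: on a fixed slice $\{x+y=z\}$, $\frac{x}{x+y}f_1(z)+\frac{y}{x+y}f_2(z)$ is affine in $w$ and vanishes only if $f_1(z)=f_2(z)=0$, giving uniqueness of $(f_1,f_2)$; moreover any nonzero affine combination in $w$ fails to be in $\LL^2([-1,1],\rho)$ at non-negligibly many $z$, so the $f_3$ component is uniquely determined as the $\LL^2(D,\mu)$-piece. The expansion (\ref{eq:basis-V}) then follows by expanding $f_1,f_2$ in the basis $(\psi_k)_{k\geq 1}$ in $\LL^2([a,b],\wi{\nu})$ and $f_3$ in the basis $(H_d\psi_k^{(d)})_{d\geq 2,k\geq 1}$ of $\LL^2(D,\mu)$ from part (a).

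The main obstacle I anticipate is checking that the classical regular Sturm-Liouville machinery applies cleanly on $[a,b]$ under only $q/p\in\LL^1([a,b])$ (rather than continuity), and verifying rigorously that the reflection boundary condition of $A$ on $D$ reduces exactly to pure Neumann on the $z$-factor with no coupling to the $w$-factor; once that separation of variables is justified, everything else reduces to standard Hilbert-space arguments.
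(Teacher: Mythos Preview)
Your proposal is correct and follows essentially the same route as the paper: separation of variables via $(z,w)$, regular Sturm--Liouville theory on $[a,b]$ for each $d$ to produce the $\psi^{(d)}_k$, and the tensor structure $\LL^2(D,\mu)\simeq\LL^2([a,b],\wi{\nu})\otimes\LL^2([-1,1],\rho)$ to assemble the orthonormal basis in~(a). The only cosmetic difference is in the uniqueness step of~(b): the paper argues by integrating over thin strips $\{0<x<\varepsilon\}$ and using $\int f_3^2/x<\infty$, whereas you slice in $z$ and use that a nonzero affine function of $w$ is not in $\LL^2([-1,1],(1-w^2)^{-1}dw)$; these are equivalent formulations of the same boundary-singularity argument.
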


Point~(b) says that the eigenvectors of the form~(\ref{eq:eigenv-A}), although not orthogonal in some
Hilbert space, allow one to recover a bigger class of functions than in Point~(a). % Note that one can check
% that the three series in~(\ref{eq:basis-V}) all converge for $\|\cdot\|_{\textup{Leb}}$, but we do not know
% if the family~(\ref{eq:basis}) is free for this norm.
The vector space $V$ is not equal to
$\LL^2(D,\textup{Leb})$, but the following result shows that it is much bigger than $\LL^2(D,\mu)$.

\begin{prop}
  \label{prop:V}
  The vector space $V$ of Theorem~\ref{thm:eigen-SDE} contains $H^1(D,\textup{Leb})$.
\end{prop}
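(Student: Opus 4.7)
Given $f\in H^1(D,\textup{Leb})$, the trace theorem provides traces $f_1(z):=f(z,0)$ and $f_2(z):=f(0,z)$ on the two boundary segments $\{y=0\}\cap D$ and $\{x=0\}\cap D$; both lie in $H^{1/2}([a,b])\subset L^2([a,b])$, with $\|f_i\|_{L^2}\leq C\|f\|_{H^1(D)}$. With these candidates for the pieces, the definition of $V$ forces
\[
f_3(x,y)\ :=\ f(x,y)-\frac{x}{x+y}f_1(x+y)-\frac{y}{x+y}f_2(x+y),
\]
which is well defined a.e.\ on $D$ since $D$ avoids the origin. The only nontrivial point is to show $f_3\in L^2(D,\mu)$. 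Because $p\geq c>0$ and $q/p\in L^1([a,b])$, the density of $\mu$ is bounded by $M/(xy)$ on $D$, so it is enough to prove
\[
\int_D\frac{f_3(x,y)^2}{xy}\,dx\,dy\ \leq\ C\|f\|_{H^1(D)}^2.
\]

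The key is the change of variables $(x,y)\mapsto(z,u)$ with $z=x+y$, $u=y/(x+y)$, which maps $D$ diffeomorphically onto $[a,b]\times[0,1]$, with $xy=z^2u(1-u)$ and $dx\,dy=z\,dz\,du$. Setting $g(z,u):=f_3(z(1-u),zu)$, one checks directly from the definition of $f_3$ that $g(z,0)=g(z,1)=0$ for every $z\in[a,b]$. The target integral becomes
\[
\int_D\frac{f_3^2}{xy}\,dx\,dy\ =\ \int_a^b\frac{1}{z}\int_0^1\frac{g(z,u)^2}{u(1-u)}\,du\,dz.
\]
For any $g(z,\cdot)\in H^1([0,1])$ vanishing at both endpoints, a one-line Cauchy--Schwarz argument ($|g(u)|^2\leq\min(u,1-u)\|\partial_u g\|_{L^2([0,1])}^2\leq 2u(1-u)\|\partial_u g\|_{L^2([0,1])}^2$) yields the Hardy-type bound
\[
\int_0^1\frac{g(z,u)^2}{u(1-u)}\,du\ \leq\ 2\int_0^1(\partial_u g)^2\,du.
\]

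From $g(z,u)=f(z(1-u),zu)-(1-u)f_1(z)-uf_2(z)$ one obtains
\[
\partial_u g\ =\ z\bigl(\partial_y f-\partial_x f\bigr)\bigl(z(1-u),zu\bigr)+f_1(z)-f_2(z),
\]
so $(\partial_ug)^2\leq 3z^2(|\partial_xf|^2+|\partial_yf|^2)+3(f_1(z)-f_2(z))^2$. Reverting the change of variables on the gradient term gives $3\int_D(|\partial_xf|^2+|\partial_yf|^2)\,dx\,dy\leq 3\|f\|_{H^1}^2$, and the trace estimate bounds $\int_a^b z^{-1}(f_1-f_2)^2\,dz$ by $C\|f\|_{H^1}^2$. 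Combining everything and using $z\geq a$ produces the desired inequality $\|f_3\|_{L^2(\mu)}\leq C\|f\|_{H^1(D)}$.

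The chief technicality is that the computation of $\partial_u g$ requires smoothness, so I would first establish the inequality for $f\in C^\infty(\overline D)$ (which is dense in $H^1(D)$). Since the map $f\mapsto f_3$ is linear, the same inequality applied to $f^{(n)}-f^{(m)}$ shows that for an $H^1$-approximating sequence $(f^{(n)})$, the associated $(f_3^{(n)})$ is Cauchy in $L^2(D,\mu)$. Its limit agrees a.e.\ with the pointwise-defined $f_3$, because $\mu$ is equivalent to Lebesgue measure on the interior of $D$ and $f_3^{(n)}\to f_3$ in $L^2(D,\textup{Leb})$ (using continuity of the trace operator and boundedness of $x/(x+y)$ and $y/(x+y)$ on $D$). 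Hence $f_3\in L^2(D,\mu)$ and $f\in V$, proving $H^1(D,\textup{Leb})\subset V$. The density/approximation step is the main place where one must be careful, but it is standard once the quantitative bound for smooth $f$ is in hand.
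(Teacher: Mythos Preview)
Your proof is correct and follows essentially the same approach as the paper: change variables to a rectangle in $(z,u)$ (the paper uses $w=(x-y)/(x+y)=1-2u$ instead), subtract the boundary traces, and control the weighted $L^2$ norm of the remainder by a Hardy-type inequality in the angular variable combined with a density argument. The only cosmetic differences are that the paper introduces an intermediate ``weighted trace'' notion and handles the two boundary pieces separately (bounding $g-g(z,1)$ on $w\in[0,1]$ and symmetrically for $w=-1$), whereas you subtract both traces at once and apply a single two-endpoint Hardy bound; both arguments rest on the same Cauchy--Schwarz estimate $|g(u)|^2\le \min(u,1-u)\|\partial_u g\|_{L^2}^2$.
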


\begin{proof} 
To prove this result, it is more convenient to consider the variables $(z,w)$ as in~(\ref{eq:var-z-w})
instead of $(x,y)$. The vector space $V$ then becomes the set of $g\in\LL^2([a,b]\times[-1,1],\textup{Leb})$
such that 
\begin{equation}
  \label{eq:proof-V}
  g(z,w)=\frac{1+w}{2}g_1(z)+\frac{1-w}{2}g_2(z)+g_3(z,w)  
\end{equation}
for some $g_1,g_2\in\LL^2([a,b],\textup{Leb})$ and
$g_3\in\LL^2([a,b]\times[-1,1],\nu)=\LL^2([a,b]\times[-1,1],(1-w^2)^{-1}dz\,dw)$.

We first introduce the following notion of trace: we say that a function
$g\in\LL^2([a,b]\times[-1,1],\textup{Leb})$ admits the function
$\bar{g}\in\LL^2([a,b],\textup{Leb})$ as a trace at $w=1$, or $w=-1$ respectively, if
$$
g(z,w)-\bar{g}(z)\in\LL^2([a,b]\times[0,1],(1-w^2)^{-1}dz\,dw),
$$
or
$$
g(z,w)-\bar{g}(z)\in\LL^2([a,b]\times[-1,0],(1-w^2)^{-1}dz\,dw)
$$
respectively.

Our first claim is that any $g\in\LL^2([a,b]\times[-1,1],\textup{Leb})$ which admits traces $g_1$ and
$g_2$ at $w=1$ and $w=-1$ respectively, belongs to $V$, and these traces are exactly the functions $g_1$ and
$g_2$ in~(\ref{eq:proof-V}). To see that, we only have to check that
$g_3\in\LL^2([a,b]\times[0,1],(1-w^2)^{-1}dz\,dw)$, and the same result on $[a,b]\times[-1,0]$ will follow by
symmetry:
\begin{multline*}
  \int_a^b dz\int_0^1 dw\,\frac{\left(g-\frac{1+w}{2}g_1-\frac{1-w}{2}g_2\right)^2}{1-w^2} \\
  \leq 2\int_a^bdz\int_0^1
  dw\,\frac{(g-g_1)^2}{1-w}+2\int_a^bdz\int_0^1dw\,(g_1+g_2)^2\frac{1-w}{4(1+w)}<+\infty.
\end{multline*}

Second, we claim that any $g\in H^1([a,b]\times[-1,1],\textup{Leb})$ admits traces at $w=1$ and $w=-1$ as
defined above. Assume first that $g\in C^1([a,b]\times[0,1])$. Then, using the Cauchy-Schwartz inequality,
\begin{multline*}
  \int_0^1dw\int_a^bdz\,\frac{(g(z,w)-g(z,1))^2}{1-w^2}
  =\int_0^1dw\int_a^bdz\,\frac{\left(\int_w^1\nabla_wg(z,x)dx\right)^2}{1-w^2} \\
  \leq\int_0^1dw\int_a^bdz\int_w^1dx\,|\nabla_wg(z,x)|^2\leq \|\nabla_w g\|^2_{\textup{Leb}}.
\end{multline*}
Since in addition $\|g(\cdot,1)\|_{\LL^2(\textup{Leb})}\leq 4\|g\|_{H^1(\textup{Leb})}$ by classical trace
results (cf.\ e.g.~\cite[p.\:196]{brezis}), the function $g\mapsto (g(\cdot,1),\,g-g(\cdot,1))$ extends by
density to a linear operator $\psi$ continuous from $H^1([a,b]\times[0,1],\textup{Leb})$ to
$\LL^2([a,b],\textup{Leb})\times\LL^2([a,b]\times[0,1],(1-w^2)^{-1}dz\,dw)$. Since obviously
$\psi_1(g)+\psi_2(g)=g$, the claim is proved and the proof of Proposition~\ref{prop:V} is completed.
\end{proof}

\paragraph{Proof of Theorem~\ref{thm:eigen-SDE}}
An eigenvector of $A$ of the form $H_d(w)\psi(z)$ satisfies the Neumann boundary condition in $D$ iff
$\psi'(a)=\psi'(b)=0$. The problem~(\ref{eq:EDO-vp-2}) with this boundary condition is a regular
Sturm-Liouville problem with the weight $\wi{\nu}$ defined in~(\ref{eq:def-tilde-nu}). Therefore (cf.\
e.g.~\cite[Thm.\:4.6.2]{zettl-05}), for all $d\geq 0$, there exists an orthonormal basis
$\{\psi^{(d)}_k\}_{k\geq 1}$ of $\LL^2([a,b],\wi{\nu})$ composed of solutions to~(\ref{eq:EDO-vp-2}) on
$(a,b)$ with Neumann boundary conditions. All the corresponding eigenvalues are real, simple and the
corresponding spectrum has no accumulation point.

Now, we claim that
$$
{\cal F} \df \bigcup_{d\geq 2}\Big\{H_d\Big(\frac{x-y}{x+y}\Big)\psi^{(d)}_k(x+y)\Big\}_{k\geq 1}
$$
forms an orthonormal basis of $\LL^2(D,\mu)$. The orthonormal property follows from the fact that, if
$\varphi(x,y)=H_d(\frac{x-y}{x+y})\psi^{(d)}_k(x+y)$ and
$\varphi'(x,y)=H_{d'}(\frac{x-y}{x+y})\psi^{(d')}_{k'}(x+y)$ for $d,d'\geq 2$ and $k,k'\geq 1$,
\begin{align*}
  \langle\varphi,\varphi'\rangle_\mu & =\int_{-1}^1\int_a^b
  H_d(w)H_{d'}(w)\psi^{(d)}_k(z)\psi^{(d')}_{k'}(z)d\nu(z,w) \\ &
  =\langle\psi^{(d)}_k,\psi^{(d')}_{k'}\rangle_{\wi{\nu}}\:\langle H_d,H_{d'}\rangle_{(1-w^2)^{-1}dw}.  
\end{align*}
To prove that ${\cal F}$ is a basis of $\LL^2(D,\mu)$, assume that $f\in\LL^2(D,\mu)$ satisfies
$$
\iint_{D}f(x,y)H_d\Big(\frac{x-y}{x+y}\Big)\psi^{(d)}_k(x+y)d\mu(x,y)=0,\quad\forall d\geq 2,\ k\geq 1.
$$
By the Cauchy-Schwartz inequality, for all $d\geq 2$, the function
$$
\wi{f}_d(z) \df \int_{-1}^1f\Big(\frac{z(1+w)}{2},\frac{z(1-w)}{2}\Big)\frac{H_d(w)}{1-w^2}\:dw
$$
belongs to $\LL^2([a,b],\wi{\nu})$. In addition,
$$
\langle \wi{f}_d,\psi_k^{(d)}\rangle_{\wi{\nu}}=0,\quad\forall k\geq 1.
$$
Therefore, $\wi{f}_d(z)=0$ for all $d\geq 2$, for Lebesgue-almost every $z\geq 0$. By Fubini's theorem,
$w\mapsto f(\frac{z(1+w)}{2},\frac{z(1-w)}{2})$ belongs to $\LL^2([-1,1],(1-x^2)^{-1}dx)$ for almost every
$z\geq 0$. Hence we deduce from Proposition~\ref{prop:poly-ortho} that this function is 0 for almost every
$z\geq 0$. Hence $f=0$.

Thus ${\cal F}$ is an orthonormal basis of $\LL^2(D,\mu)$ composed of eigenvectors of $A$. It is then
classical to deduce that $A$ admits no other eigenvector in this space, in the sense of point~(a).
% Moreover, since all the eigenvalues corresponding to the functions $\{\psi^{(d)}_k\}_{k\geq 1}$ in the
% Sturm-Liouville problem~(\ref{eq:EDO-vp-2}) are simple, two eigenvectors of ${\cal F}$ corresponding to the
% same eigenvalue must have different values of $d$, which ends the proof of point~(a).

For point~(b), let us first prove that the decomposition
$$
f=\frac{x}{x+y}f_1(x+y)+\frac{y}{x+y}f_2(x+y)+f_3(x,y)
$$
is unique for $f\in V$, with $f_1,f_2\in\LL^2([a,b],\textup{Leb})$ and $f_3\in\LL^2(D,\mu)$. We only need to
prove that this equality for $f=0$ implies $f_1=f_2=f_3=0$.

Since $f_3\in\LL^2(D,\mu)$, we have
$$
\int_0^\varepsilon dx\int_{a-x}^{b-x}dy\, f_3^2(x,y)\leq
\varepsilon\int_0^\varepsilon dx\int_{a-x}^{b-x}dy\, \frac{f_3^2(x,y)}{x}=o(\varepsilon)
$$
as $\varepsilon\rightarrow 0$. Therefore,
\begin{align*}
  \int_0^\varepsilon dx\int_{a-x}^{b-x}dy\, \frac{y^2}{(x+y)^2}f_2^2 & =
  \int_0^\varepsilon dx\int_{a-x}^{b-x}dy\, \left(\frac{x}{x+y}f_1^2+f_3\right)^2 \\ &
  \leq 2\int_0^\varepsilon dx\int_{a-x}^{b-x}dy\, \frac{x^2}{(x+y)^2}f_1^2+o(\varepsilon)=o(\varepsilon).
\end{align*}
This implies that $\int_a^b f_2^2(z)dz=0$, i.e.\ $f_2=0$. Similarly, $f_1=0$ and thus $f_3=0$.

Since $\LL^2([a,b],\textup{Leb})=\LL^2([a,b],\wi{\nu})$, the result then follows from the decomposition of
$f_1$ and $f_2$ (resp.\ $f_3$) in the orthonormal basis $\{\psi^{(1)}_k\}_{k\geq 1}$ of
$\LL^2([a,b],\wi{\nu})$ (resp.\ $\{H_d(\frac{x-y}{x+y})\psi^{(d)}_k(x+y)\}_{d\geq 2,\, k\geq 1}$ of
$\LL^2(D,\mu)$\,).\hfill$\Box$
\bigskip

To motivate the calculations of the next section, let us finally observe that, for all $\varphi\in
C^2(\RR_+^2)$,
$$
A\varphi(x,y)=\wi{T}\varphi(x,y)p(x+y)+\wi{L}\varphi(x,y)q(x+y),
$$
where
$$
\wi{L}=x\frac{\partial}{\partial x}+y\frac{\partial}{\partial y}
$$
and
\begin{equation}
  \label{eq:def-tilde-T}
  \wi{T}=x\frac{\partial^2}{\partial x^2}+y\frac{\partial^2}{\partial y^2},  
\end{equation}
and that $\wi{T}\wi{L}=\wi{L}\wi{T}$.

\section{On a family of bivariate polynomials}
\label{sec:poly}

% \subsection{Existence and properties of a family of bivariate polynomials satisfying
%   two particular difference equations}
% \label{sec:exist}

The goal of this section is to prove the existence of a family of polynomials in $\mathbb{R}$ of two variables
$X$ and $Y$, satisfying the family of relations
\begin{subequations}
  \label{eq:poly}  
  \begin{equation}
    \label{eq:poly-1}  
    XP(X+1,Y)+YP(X,Y+1)=(X+Y+d)P(X,Y) 
  \end{equation}
  \begin{equation}
    \label{eq:poly-2} 
    XP(X-1,Y)+YP(X,Y-1)=(X+Y-d)P(X,Y)
  \end{equation}
\end{subequations}
for an integer $d\geq 0$. 

Before stating the main result of the section, let us recall some notation. $\RR[X]$ is the set of polynomials
on $\RR$ with a single variable $X$ and $\RR[X,Y]$ the set of real polynomials with two variables $X$ and
$Y$. The degree $\text{deg}(P)$ of a polynomial $P\in\RR[X,Y]$ is defined as the maximal total degree of each
monomial of $P$. We define
$$
{\cal P}_d=\{P\in\RR[X,Y]:\text{deg}(P)\leq d\}.
$$
For all $P\in\RR[X,Y]$, we may write
\begin{equation*}
%   \label{eq:decomp-P}
  P(X,Y)=\sum_{i,j\geq 0}a_{i,j}X^iY^j,
\end{equation*}
where only finitely many of the $a_{i,j}$ are nonzero.  The real number $a_{i,j}$ will be called the
$(i,j)$-coefficient of $P$.

For any $P\in\mathbb{R}[X,Y]$ and for any $d\geq 0$, we denote by $[P]_d$ the sum of all monomials of $P$ of
degree $d$:
$$
[P]_d(X,Y)=\sum_{i=0}^da_{i,d-i}X^iY^{d-i}.
$$
In particular, $[P]_d$ is homogeneous of degree $d$ and $P=\sum_{i=1}^\infty [P]_i$.

We denote by $\Delta_i$ the first-order symmetrized discrete derivative with respect to the $i$-th variable:
\begin{align*}
  \forall P\in\mathbb{R}[X,Y],\quad & \Delta_1P(X,Y)=\frac{P(X+1,Y)-P(X-1,Y)}{2} \\
  \mbox{and}\quad & \Delta_2P(X,Y)=\frac{P(X,Y+1)-P(X,Y-1)}{2},
\end{align*}
and by $\Delta^2_i$ the symmetrized second-order discrete derivative with respect to the $i$-th variable:
\begin{align*}
  \forall P\in\mathbb{R}[X,Y],\quad &
  \Delta^2_1P(X,Y)=\frac{P(X+1,Y)+P(X-1,Y)-2P(X,Y)}{2} \\
  \mbox{and}\quad &  \Delta^2_2P(X,Y)=\frac{P(X,Y+1)+P(X,Y-1)-2P(X,Y)}{2}.
\end{align*}
Note that the superscript in the notation $\Delta^2_i$ does not correspond to the composition of the
operator $\Delta_i$ with itself.

Finally, we define the linear operators on $\RR[X,Y]$
\begin{equation*}
  L=X\Delta_1+Y\Delta_2 \quad\text{and}\quad T=X\Delta_1^2+Y\Delta_2^2.
\end{equation*}
Then, adding and substracting the equations~(\ref{eq:poly-1}) and~(\ref{eq:poly-2}), the
system~(\ref{eq:poly}) is equivalent to
\begin{align}
  L P & =dP \label{eq:(*)} \\
  T P & =0. \label{eq:(**)}
\end{align}

We are going to prove the following result
\begin{thm}
  \label{thm:poly}
  For $d=1$, the system~\eqref{eq:poly} has a two-dimensional vector space of solutions in $\mathbb{R}[X,Y]$,
  spanned by the two polynomials $P_1^{(1)} \df X$ and $P_1^{(2)} \df Y$.

  For any $d\in\{0,2,3,\ldots\}$, the system~\eqref{eq:poly} has a one-dimensional vector space of
  solutions. All these solutions, except 0, are of degree $d$.  For $d=0$, this is the vector space of
  constants, spanned by $P_0 \df 1$. When $d\geq 2$, we denote by $P_d$ the unique solution to~\eqref{eq:poly}
  with $(d-1,1)$-coefficient equal to $-2H'_d(1)=(-1)^d2\sqrt{d(d-1)(2d-1)}$, where $H_d$ is defined in
  Remark~\ref{rem:Gegenbauer}.
\end{thm}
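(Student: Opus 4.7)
The plan is to recast \eqref{eq:poly} as the pair $LP = dP$ and $TP = 0$ (already observed in the text) and then bootstrap everything from the top homogeneous part of $P$, using two ingredients: a clean description of $L$-eigenspaces, and the commutation relation
\[
[L,T] \;=\; LT - TL \;=\; -T.
\]
I would first establish this commutation by reducing to one variable: writing $L = M_X + M_Y$ and $T = N_X + N_Y$ with $M = X\Delta_X$ and $N = X\Delta_X^2$, the cross-variable contributions cancel because operators acting on different variables commute, so it suffices to prove $[M,N] = -N$ in one variable. This is a direct calculation obtained by expressing both $MNf$ and $NMf$ as combinations of $f(X \pm 1)$ and $f(X \pm 2)$ and collecting terms.

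Next I would analyze $L$ by decomposing it as $L = L_0 + L_{-2} + L_{-4} + \cdots$, where $L_{-2k}$ lowers total degree by $2k$ (because $\Delta_i$ is odd in the shift, so only odd binomial terms contribute) and $L_0$ multiplies a degree-$m$ homogeneous polynomial by $m$. Separating $LP = dP$ by total degree yields, for each $m$,
\[
(d-m)\,[P]_m \;=\; \sum_{k \geq 1} L_{-2k}\, [P]_{m+2k}.
\]
This has three consequences: any nonzero $L$-eigenvector with eigenvalue $d$ has degree exactly $d$; the odd-shifted parts $[P]_{d-1}, [P]_{d-3}, \ldots$ must vanish; and the even-shifted parts $[P]_{d-2}, [P]_{d-4}, \ldots$ are uniquely determined from $[P]_d$ by the recursion.

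For the second equation I would expand $T = T_{-1} + T_{-3} + \cdots$ and compute the kernel of $T_{-1}$ on homogeneous polynomials of degree $d$. Writing $p_d = \sum_{i=0}^d c_i X^i Y^{d-i}$, vanishing of $T_{-1}p_d$ produces the two-term relations
\[
c_{k+1}\binom{k+1}{2} + c_k\binom{d-k}{2} \;=\; 0, \qquad k = 0, \ldots, d-1,
\]
which for $d \geq 2$ force $c_0 = c_d = 0$ and determine $c_2,\ldots,c_{d-1}$ uniquely from $c_1$, giving a one-dimensional kernel; the degenerate cases $d=0$ and $d=1$ give the full homogeneous spaces of dimensions $1$ and $2$, matching the statement. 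Given any $p_d \in \ker T_{-1}$, extend it to $P$ with $LP = dP$ via the recursion above, and set $Q = TP$. The commutation identity gives $LQ = T(LP) - TP = (d-1)Q$, while at the top degree $[Q]_{d-1} = T_{-1}[P]_d = 0$. The structural description of $L$-eigenvectors (a nonzero eigenvector for eigenvalue $d-1$ has nonzero top part in degree $d-1$) then forces $Q = 0$, so $P$ satisfies both equations. Conversely, any nonzero solution of \eqref{eq:poly} has $[P]_d \in \ker T_{-1}$ (by looking at the top degree of $TP=0$) and is then determined by $[P]_d$, yielding equality of dimensions. Finally, the normalization fixing the $(d-1,1)$-coefficient to $-2H_d'(1)$ singles out $P_d$ uniquely for each $d \geq 2$.

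I expect the main obstacle to be the commutation identity $[L,T] = -T$: it is the conceptual hinge of the argument, but its verification is a bookkeeping exercise in which sign and index errors are easy to make when expanding $X\Delta_X X \Delta_X^2 - X \Delta_X^2 X \Delta_X$ on a test function. Once this identity is in hand, everything else reduces to clean linear-algebra recursions in the finite-dimensional spaces $\mathcal{P}_d$.
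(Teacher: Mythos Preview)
Your argument is correct and rests on the same two pillars as the paper's proof: the commutation identity $TL=LT+T$ (your $[L,T]=-T$) and the description of $L$-eigenvectors via the homogeneous decomposition. The assembly, however, is genuinely different. The paper first computes $\dim\mathcal{D}_d=d+2$ where $\mathcal{D}_d=\{P\in\mathcal{P}_d:TP=0\}$ (Lemma~\ref{lem:poly}(b)), then observes that $\mathcal{D}_d$ is $L$-stable by the commutation relation, and finally uses a dimension count (eigenspaces of $L$ restricted to $\mathcal{D}_d$ have dimension at most $1$ for eigenvalues $\neq 1$, at most $2$ for eigenvalue $1$, summing to at most $d+2$) to force equality everywhere. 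You instead bypass the computation of $\dim\mathcal{D}_d$ entirely: starting from $p_d\in\ker T_{-1}|_{\mathcal{H}_d}$, you build its unique $L$-eigenvector extension $P$ and then verify $TP=0$ directly by noting that $Q=TP$ is an $L$-eigenvector for eigenvalue $d-1$ with vanishing top homogeneous part, hence zero. Your route is a bit more constructive and slightly shorter; the paper's route, via the Sturm--Liouville connection in the proof of Lemma~\ref{lem:poly}(b), has the advantage of linking $\ker T_{-1}|_{\mathcal{H}_d}$ to the continuous polynomials $H_d$ of Proposition~\ref{prop:poly-ortho}, which is what makes the normalization via $H'_d(1)$ natural. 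One small point you glide over: for the normalization to be well-defined you need the $(d-1,1)$-coefficient of a nonzero solution to be nonzero, which follows immediately from your recursion since $c_{d-1}$ is a nonzero multiple of $c_1$.
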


It can be checked that the first polynomials are
\begin{equation*}
%   \label{eq:P_d}
  \begin{aligned}
    & P_0=1,\quad P^{(1)}_1=X,\quad P^{(2)}_1=Y \\
    & P_2=2\sqrt{6}\ XY, \\
    & P_3=-2\sqrt{30}\ XY(X-Y), \\
    & P_4=4\sqrt{21}\ XY(X^2-3XY+Y^2+1), \\
    & P_5=-6\sqrt{20}\ XY(X-Y)(X^2-5XY+Y^2+5). %, \\
    % & P_6\propto XY\big(X^4-10X^3Y+20X^2Y^2-10XY^3+Y^4 \\ 
    % & \phantom{P_6\propto XY\big(} +15X^2-40XY+15Y^2+8\big), \\
    % & P_7\propto XY(X-Y)\big(X^4-14X^3Y+36X^2Y^2-14XY^3+Y^4 \\ & \phantom{P_7\propto
    %   XY(X-Y)\big(} +35X^2-140XY+35Y^2+84\big).
  \end{aligned}
\end{equation*}

Before proving this result, let us give some properties of the polynomials $P_d$, proved after the proof of
Theorem~\ref{thm:poly}.
\begin{prop}
  \label{prop:Pd}
  The polynomials $P_d$, $d\geq 2$, defined in Theorem~\ref{thm:poly} satisfy the following properties:
  \begin{description}
  \item[\textmd{(a)}] For all $d\geq 2$, $[P_d]_d(X,Y)=(X+Y)^dH_d\left(\frac{X-Y}{X+Y}\right)$, where $H_d$ is
    defined in Remark~\ref{rem:Gegenbauer}.
  \item[\textmd{(b)}] $[P]_{d-2k-1}=0$ for all $0\leq k < d/2$.
  \item[\textmd{(c)}] For all $d\geq 2$, $P_d$ is divisible by $XY$. For $d$ odd, $P_d(X,Y)$ is divisible by
    $XY(X-Y)$.
  \item[\textmd{(d)}] for all $d\geq 2$, $P_d(Y,X)=P_d(-X,-Y)=(-1)^dP_d(X,Y)$.
  \item[\textmd{(e)}] $P_d(i,j)=0$ if $i,j\in\mathbb{Z}$, $ij\geq 0$ and $0\leq |i|+|j|\leq d-1$.
  \item[\textmd{(f)}] For all $d\geq 0$, the matrix $(P_i(j,d-j))_{0\leq i,j\leq d}$ is invertible, where
    $P_1=P_1^{(1)}$. In particular, $(P_d(j,d-j))_{0\leq j\leq d}\not=0$.
%   \item[\textmd{(g)}] $P_d(X,Y)$ is not divisible by any non-constant polynomial in $X+Y$.
  \item[\textmd{(g)}] For all $d\geq 3$, $P_d(j,d-j)P_d(j+1,d-j-1)<0$ if $1\leq j\leq d-2$.
  \item[\textmd{(h)}] For all $d,d',k\geq 2$,
    \begin{equation}
      \label{eq:orth-Pd}
      \sum_{i=1}^{k-1}\frac{P_d(i,k-i)P_{d'}(i,k-i)}{i(k-i)}=2\binom{k+d-1}{2d-1}\delta_{dd'},      
    \end{equation}
    where $\delta_{ij}$ is the Kronecker symbol and by convention $\binom{i}{j}=0$ if $j<0$ or $j>i$.
  \end{description}
\end{prop}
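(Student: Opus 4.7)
The plan is to treat each item by exploiting the defining system $LP_d = dP_d$, $TP_d = 0$ together with its uniqueness (up to scalar, for $d \neq 1$). The structural observation is that $X\Delta_1$ maps a monomial of total degree $n$ to monomials of degrees $n, n-2, n-4, \ldots$, so $L$ preserves the parity of homogeneous degrees, whereas $X\Delta_1^2$ produces degrees $n-1, n-3, \ldots$, so $T$ flips parity. The defining equations therefore split by parity, and uniqueness kills the ``wrong parity'' components, yielding (b). For (a), extracting the top-degree part from $TP_d = 0$ yields $\wi{T}[P_d]_d = 0$ with $\wi{T} = X\partial_X^2 + Y\partial_Y^2$; in the variables $z = X+Y$, $w = (X-Y)/(X+Y)$, the equation $\wi{T}((X+Y)^d h(w)) = 0$ reduces to the ODE $(1-w^2)h''(w) + d(d-1)h(w) = 0$ of Proposition~\ref{prop:poly-ortho}, identifying $[P_d]_d$ as a scalar multiple of $(X+Y)^d H_d((X-Y)/(X+Y))$, with the normalization pinning down the scalar.

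For (c), setting $X = 0$ in the defining equations yields $Y\Delta_2 P_d(0, Y) = dP_d(0, Y)$ and $Y\Delta_2^2 P_d(0, Y) = 0$; the second forces $P_d(0, Y)$ to be affine, and the first then forces it to vanish for $d \notin \{0, 1\}$, so $X \mid P_d$; symmetrically $Y \mid P_d$; and for odd $d$, once (d) is proved, $P_d(X, X) = -P_d(X, X) = 0$ gives $(X-Y) \mid P_d$. For (d), both $\sigma_1(X, Y) = (Y, X)$ and $\sigma_2(X, Y) = (-X, -Y)$ transform the system into itself (for $\sigma_2$, $T$ acquires an overall sign absorbed by $TP_d = 0$), so by uniqueness $P_d \circ \sigma_i = c_i P_d$, and $c_i = (-1)^d$ is read off from (a) together with $H_d(-w) = (-1)^d H_d(w)$. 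For (e), reduce by (d) to $i, j \geq 0$ and induct on $n = i + j$: the axes $ij = 0$ are killed by (c), and for $n \leq d-1$ equation~(\ref{eq:poly-2}) reads $iP_d(i-1, j) + jP_d(i, j-1) = (n-d)P_d(i, j)$ with $n - d \neq 0$, closing the induction. For (f), by (a) each $P_i(j, d-j)$ is a polynomial in $j$ of degree exactly $i$, so $\{P_i(\cdot, d-\cdot)\}_{i=0}^d$ is a basis of polynomials of degree $\leq d$; evaluation at the $d+1$ distinct points $j = 0, \ldots, d$ is then invertible. For (g), apply~(\ref{eq:poly-1}) along the line $i+j = d-1$ (on which $P_d$ vanishes by (e)) to obtain $P_d(j+1, d-1-j)/P_d(j, d-j) = -(d-1-j)/j$, which is strictly negative for $1 \leq j \leq d-2$, while non-vanishing of all values along $X+Y = d$ follows from (f) via this recursion.

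The main work lies in (h). Introduce ladder operators $(a^+ g)(i) \df ig(i-1) + (k-i)g(i)$ and $(a^- h)(i) \df ih(i+1) + (k-1-i)h(i)$ between functions on $\{0, \ldots, k-1\}$ and $\{0, \ldots, k\}$. Equations~(\ref{eq:poly-1}) and~(\ref{eq:poly-2}), restricted to the lines $X+Y = k-1$ and $X+Y = k$ respectively, become the intertwining relations $a^+ P_d^{(k-1)} = (k-d) P_d^{(k)}$ and $a^- P_d^{(k)} = (k-1+d) P_d^{(k-1)}$, where $P_d^{(k)}(i) \df P_d(i, k-i)$. A short summation by parts shows that $a^+$ and $a^-$ are mutual adjoints under the inner product $\langle f, g \rangle_k \df \sum_{i=1}^{k-1} f(i)g(i)/(i(k-i))$ on functions vanishing at the endpoints $i = 0, k$, a boundary condition satisfied by the $P_d^{(k)}$ thanks to (c). Consequently $a^+ a^-$ is self-adjoint with $a^+ a^- P_d^{(k)} = (k(k-1) - d(d-1)) P_d^{(k)}$, and since $d \mapsto d(d-1)$ is injective on $d \geq 1$, the off-diagonal orthogonality in~(\ref{eq:orth-Pd}) follows for $d \neq d'$. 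For the diagonal case, adjointness produces the recursion $\langle P_d^{(k)}, P_d^{(k)} \rangle_k = \frac{k-1+d}{k-d} \langle P_d^{(k-1)}, P_d^{(k-1)} \rangle_{k-1}$ for $k > d$, which matches the stated $2\binom{k+d-1}{2d-1}$ up to a $k$-independent factor. Pinning down this factor via the base case $k = d$ uses the explicit formula $P_d(i, d-i) = (-1)^{i-1}\binom{d-2}{i-1} P_d(1, d-1)$ from (g), a Vandermonde-type evaluation of $\sum_{j=0}^{d-2}\binom{d-1}{j+1}\binom{d-1}{j} = \binom{2d-2}{d}$, and the value of $P_d(1, d-1)$ computed from the top-degree normalization via the explicit $H_d$-formula of Remark~\ref{rem:Gegenbauer}. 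I expect this last constant computation to be the principal technical obstacle.
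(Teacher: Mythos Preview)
Your arguments for (a)--(g) are correct and in several places cleaner than the paper's. In particular, your proof of (c) by setting $X=0$ in the system (forcing $P_d(0,Y)$ affine, hence zero) is simpler than the paper's inductive tracking of $XY$-divisibility through the homogeneous components via the recursion~(\ref{eq:P-tilde-bis}); likewise your proof of (d) via uniqueness under the involutions $\sigma_1,\sigma_2$ is more conceptual than the paper's component-by-component induction. For (h), your ladder-operator formulation $a^\pm$ is essentially a repackaging of the paper's approach: the paper derives~(\ref{eq:utile-Moran-3}) directly and reads off the self-adjoint tridiagonal matrix $A_k$, which is nothing but $a^+a^- - k(k-1)\mathrm{Id}$ in your language, and the norm recursion $(k+d)\|P_d^{(k)}\|_k^2 = (k-d+1)\|P_d^{(k+1)}\|_{k+1}^2$ appears verbatim in the paper. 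So the orthogonality and the recursion are fine.

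The gap is in your determination of the constant. You propose to evaluate at the base level $k=d$, which requires $P_d(1,d-1)$, and you assert this can be ``computed from the top-degree normalization via the explicit $H_d$-formula''. That is not true for $d\geq 4$: the polynomial $P_d$ has nonzero homogeneous components below degree $d$ (e.g.\ $P_4 = 4\sqrt{21}\,XY(X^2-3XY+Y^2+1)$), and evaluating at the small point $(1,d-1)$ picks up all of them, not just $[P_d]_d$. Concretely $[P_4]_4(1,3)=12\sqrt{21}$ while $P_4(1,3)=24\sqrt{21}$. Knowing only the normalization of $[P_d]_d$ (equivalently $H_d$) therefore does not pin down $P_d(1,d-1)$, and your base-case plan stalls. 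The paper avoids this entirely by passing to the \emph{other} end of the recursion: letting $k\to\infty$, where $P_d(i,k-i)\sim k^d H_d\bigl(\tfrac{2i-k}{k}\bigr)$ by (a) since lower-degree terms are $O(k^{d-2})$, and identifying the limit as the Riemann sum $2\int_{-1}^1 H_d^2(x)(1-x^2)^{-1}\,dx$. This asymptotic route needs only $[P_d]_d$, which \emph{is} fixed by the normalization. Alternatively, you could compute $P_d(1,d-1)$ from the closed formula of Proposition~\ref{prop:explicit-Pd} (where $(-1)_k=0$ for $k\geq 2$ collapses the sum to a single term), but that formula is established independently and your proposal does not invoke it.
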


\begin{prop}
  \label{prop:explicit-Pd}
  For all $d\geq 2$, the polynomial $P_d$ is given by the following formula:
  \begin{equation}
    \label{eq:explicit-Pd}
    P_d(X,Y)=C_d(-X-Y)_d\sum_{k=1}^d\frac{(-d)_k(d-1)k}{(k-1)!k!}\:\frac{(-X)_k}{(-X-Y)_k},    
  \end{equation}
  where
  $$
  C_d=(-1)^{d+1}2\sqrt{\frac{2d-1}{d(d-1)}}.
  $$
\end{prop}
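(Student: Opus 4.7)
The plan is to verify that the polynomial $\widetilde P_d(X,Y)$ defined by the right-hand side of (\ref{eq:explicit-Pd}) satisfies the three conditions that, by Theorem~\ref{thm:poly}, uniquely characterise $P_d$: (i) $\widetilde P_d\in\cP_d$, (ii) $\widetilde P_d$ satisfies the system (\ref{eq:poly}) (equivalently $L\widetilde P_d=d\widetilde P_d$ and $T\widetilde P_d=0$), and (iii) the $(d-1,1)$-coefficient of $\widetilde P_d$ equals $(-1)^d 2\sqrt{d(d-1)(2d-1)}$. Once these are established, the uniqueness assertion of Theorem~\ref{thm:poly} forces $\widetilde P_d=P_d$.

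For (i) and (iii), I would first use the elementary identity $(-X-Y)_d/(-X-Y)_k=(-X-Y+k)_{d-k}$ to rewrite
$$\widetilde P_d(X,Y)=C_d\sum_{k=1}^d\frac{(-d)_k(d-1)_k}{(k-1)!\,k!}\,(-X)_k\,(-X-Y+k)_{d-k},$$
which is manifestly a polynomial of total degree at most $d$. Its top-degree homogeneous component, coming from the leading monomials $(-X)^k$ and $(-X-Y)^{d-k}$, is
$$[\widetilde P_d]_d(X,Y)=C_d(-1)^d\sum_{k=1}^d\frac{(-d)_k(d-1)_k}{(k-1)!\,k!}\,X^k(X+Y)^{d-k},$$
from which the coefficient of $X^{d-1}Y$ is extracted via the binomial expansion of $(X+Y)^{d-k}$ (contributing only the factor $d-k$). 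Rewriting $(-d)_k=(-1)^kd!/(d-k)!$ and $(d-1)_k=(d+k-2)!/(d-2)!$, the resulting sum $\sum_{k=1}^{d-1}\frac{(-d)_k(d-1)_k(d-k)}{(k-1)!\,k!}$ evaluates to $(-1)^{d+1}d(d-1)$ (a short hypergeometric identity, easily verified against the cases $d=2,3$), and combining with $C_d(-1)^d=-2\sqrt{(2d-1)/(d(d-1))}$ produces precisely $(-1)^d 2\sqrt{d(d-1)(2d-1)}$.

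The substantive step is (ii). Setting $F_k(X,Y)\df(-X)_k(-X-Y+k)_{d-k}$ and using the contiguous identities $(-X-1)_k=(-X-1)(-X)_{k-1}$ and $(-X)\cdot(-X+1)_k=(-X)_{k+1}$ (with their analogues for the second factor), I would express each of the four shifted quantities $XF_k(X\pm 1,Y)$ and $YF_k(X,Y\pm 1)$ as an explicit $\ZZ[X,Y]$-linear combination of the neighbouring building blocks $F_{k-1}$, $F_k$, $F_{k+1}$. Substituting into (\ref{eq:poly-1})--(\ref{eq:poly-2}), summing over $k$, and reindexing, each equation reduces to an algebraic identity between adjacent coefficients $\frac{(-d)_k(d-1)_k}{(k-1)!\,k!}$ weighted by polynomials in $X,Y$; after the reindexing $k=j+1$ that turns the inner sum into $\frac{d(d-1)X}{-X-Y}\,{_3}F_2(1-d,d,1-X;2,1-X-Y;1)$, these identities become the classical contiguous relations for this terminating series, equivalently the second-order difference equation satisfied by the corresponding Hahn polynomial. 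The main obstacle is precisely this bookkeeping: although each elementary shift identity is routine, ensuring that the four shifted sums collapse cleanly to the right-hand sides $(X+Y\pm d)\widetilde P_d$ requires careful matching of index ranges and boundary contributions at $k=1$ and $k=d$. A more conceptual alternative would be to exploit the commutation $[L,T]=0$ noted in Section~\ref{sec:cont} to decouple (ii) into the two independent checks $L\widetilde P_d=d\widetilde P_d$ and $T\widetilde P_d=0$, each becoming a tridiagonal problem on the basis $\{F_k\}_{1\le k\le d}$ whose resolution reproduces the prescribed coefficients $C_d\frac{(-d)_k(d-1)_k}{(k-1)!\,k!}$.
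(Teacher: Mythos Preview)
Your strategy is sound and genuinely different from the paper's. The paper does not verify the recursions~(\ref{eq:poly}) from scratch; instead it invokes the Karlin--McGregor result that the Hahn-type rational function $\phi_d(X,Y)=Q_d(X;\alpha,\beta,X+Y+1)$ satisfies a two-parameter family of relations~(\ref{eq:Karlin-McGregor-1})--(\ref{eq:Karlin-McGregor-2}), multiplies by $(\alpha+1)(-X-Y)_d$, and sends $\alpha,\beta\to -1$ to obtain a polynomial $\psi_d$ satisfying~(\ref{eq:poly}) in the limit. This outsources the contiguous-relation bookkeeping you describe to~\cite{karlin-mcgregor-75}; your direct verification is more self-contained but heavier. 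Your computation of the $(d-1,1)$-coefficient is correct (indeed $\sum_{k=1}^{d-1}\frac{(-d)_k(d-1)_k(d-k)}{(k-1)!\,k!}=(-1)^{d+1}d(d-1)$, which the paper also reaches via Chu--Vandermonde).

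One correction: your ``more conceptual alternative'' contains two slips. First, the discrete operators satisfy $TL=LT+T$ (Lemma~\ref{lem:poly}(a)), not $[L,T]=0$; the commutation you cite from Section~\ref{sec:cont} is for the \emph{continuous} operators $\wi T,\wi L$. Second, no commutation is needed to ``decouple'' (ii): the equivalence of~(\ref{eq:poly}) with the pair $LP=dP$, $TP=0$ is simply the observation that~(\ref{eq:poly-1})$\pm$(\ref{eq:poly-2}) gives~(\ref{eq:(*)}) and~(\ref{eq:(**)}), as stated just before Theorem~\ref{thm:poly}. So your alternative reduces to checking $L\widetilde P_d=d\widetilde P_d$ and $T\widetilde P_d=0$ separately on the basis $\{F_k\}$, which is fine, but the commutation remark should be dropped.
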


In order to prove Theorem~\ref{thm:poly}, we need the following lemma.
\begin{lembr}
  \label{lem:poly}
  \begin{description}
  \item[\textmd{(a)}] We have
    \begin{equation}
      \label{eq:TL=LT+T}
      TL=LT+T,
    \end{equation}
  \item[\textmd{(b)}] Define for all $d\geq 0$
    $$
    \cD_d=\{P\in\cP_d\,:\, T(P)=0\}.
    $$
    Then $\text{dim}({\cal D}_0)=1$ and $\text{dim}({\cal D}_d)=d+2$ for all $d\geq 1$.
  \end{description}
\end{lembr}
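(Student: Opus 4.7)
For part (a), I plan a direct computation using shift-operator formalism. Writing $E_i^{\pm 1}$ for the unit shift in the $i$-th variable ($X_1=X$, $X_2=Y$), we have $\Delta_i=(E_i-E_i^{-1})/2$ and $\Delta_i^2=(E_i+E_i^{-1})/2-I$. The elementary relations $E_i X_i=(X_i+1)E_i$ and $E_i^{-1}X_i=(X_i-1)E_i^{-1}$ yield the two commutators
\begin{equation*}
[\Delta_i,X_i]=\Delta_i^2+I \quad\text{and}\quad [\Delta_i^2,X_i]=\Delta_i.
\end{equation*}
Since operators in different variables commute, $[T,L]$ splits as $[X\Delta_1^2,X\Delta_1]+[Y\Delta_2^2,Y\Delta_2]$. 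Expanding each single-variable commutator via the two identities above, the terms quadratic in $X_i$ cancel (because $\Delta_i$ and $\Delta_i^2$, both polynomials in $E_i^{\pm 1}$, commute); the surviving linear-in-$X_i$ terms simplify using the algebraic identity $(\Delta_i\circ\Delta_i)-(\Delta_i^2\circ\Delta_i^2)=2\Delta_i^2$ (immediate from expansion in $E_i^{\pm 1}$), collapsing to $X_i\Delta_i^2$. Summing over $i=1,2$ yields $[T,L]=T$.

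For part (b), the key initial observation is that $T(P)(0,0)=0$ for every $P\in\RR[X,Y]$, since both $X\Delta_1^2$ and $Y\Delta_2^2$ carry an explicit $X$- or $Y$-factor that vanishes at the origin. Hence the restriction $T:\cP_d\to\cP_{d-1}$ lands inside the codimension-one subspace $V_d\df\{R\in\cP_{d-1}:R(0,0)=0\}$. The main claim is that this map is surjective onto $V_d$; granting that, the rank-nullity theorem gives
\begin{equation*}
\dim\cD_d=\dim\cP_d-\dim V_d=\frac{(d+1)(d+2)}{2}-\Big(\frac{d(d+1)}{2}-1\Big)=d+2
\end{equation*}
for $d\geq 1$, while for $d=0$ every constant is annihilated, giving $\dim\cD_0=1$.

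To establish surjectivity onto $V_d$, I would show by induction on the total degree $k\geq 1$ that every monomial $X^iY^j$ with $i+j=k$ lies in $\mathrm{Im}(T)$. The base case $k=1$ follows from $T(X^2)=X$ and $T(Y^2)=Y$. For the inductive step with $i\geq 1$ (the case $i=0$, $j=k$ being symmetric, via $T(Y^{k+1})$), the expansion $\Delta_1^2(X^{i+1})=\binom{i+1}{2}X^{i-1}+\binom{i+1}{4}X^{i-3}+\cdots$ yields
\begin{equation*}
T(X^{i+1}Y^j)=\binom{i+1}{2}X^iY^j+\binom{j}{2}X^{i+1}Y^{j-1}+R,
\end{equation*}
with $R\in\cP_{k-2}$ automatically satisfying $R(0,0)=0$. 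A secondary induction within degree $k$ on $j$ handles the term $X^{i+1}Y^{j-1}$ (already in $\mathrm{Im}(T)$ for $j\geq 1$), while the outer induction places every non-constant monomial of $R$ in $\mathrm{Im}(T)$. Since $\binom{i+1}{2}\neq 0$, we can solve for $X^iY^j\in\mathrm{Im}(T)$.

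The main obstacle is arranging this nested induction so that every correction term falls in a previously treated case: the outer induction is on the total degree $k$, the inner on $j$, and it is essential that the leftover polynomial $R$ has zero constant term — a property guaranteed by the universal identity $T(\cdot)(0,0)\equiv 0$ established at the outset, which lets the inductive hypothesis apply to every monomial of $R$.
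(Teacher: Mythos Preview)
Your proof is correct, but proceeds along different lines from the paper in both parts.

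For part~(a), the paper expands $T$ and $L$ as formal series of differential operators via Taylor expansion,
\[
T=\sum_{p\text{ even}}\frac{1}{p!}\Big(X\partial_X^p+Y\partial_Y^p\Big),\qquad
L=\sum_{q\text{ odd}}\frac{1}{q!}\Big(X\partial_X^q+Y\partial_Y^q\Big),
\]
and then checks $TL-LT=T$ through a combinatorial identity on alternating sums of reciprocal factorials. Your shift-operator argument is more direct: the two commutator formulas $[\Delta_i,X_i]=\Delta_i^2+I$ and $[\Delta_i^2,X_i]=\Delta_i$, together with the single identity $(\Delta_i)^{\circ 2}-(\Delta_i^2)^{\circ 2}=2\Delta_i^2$, settle the matter without any series manipulation. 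This is a cleaner route to the same conclusion.

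For part~(b), the paper works degree by degree with the \emph{leading} differential operator $\wi T=X\partial_X^2+Y\partial_Y^2$: it shows (via the change of variables $w=(X-Y)/(X+Y)$ and Proposition~2.2 on the ODE $(1-x^2)h''=-\lambda h$) that $\wi T:\mathcal{H}_n\to\mathcal{H}_{n-1}$ is surjective for $n\geq 2$, then solves the triangular system $[TP]_n=0$ recursively in the homogeneous components. Your argument bypasses the homogeneous decomposition entirely: you observe that $T(\mathcal{P}_d)$ lands in the codimension-one subspace $V_d=\{R\in\mathcal{P}_{d-1}:R(0,0)=0\}$ and prove surjectivity onto $V_d$ by a nested monomial induction. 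This is more elementary and self-contained, as it avoids any appeal to the Gegenbauer-type ODE; the paper's route, on the other hand, is not wasted effort, since the same analysis of $\ker(\wi T)\cap\mathcal{H}_d$ is reused in the proof of Theorem~3.1 to pin down the leading homogeneous part $[P_d]_d$.
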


\paragraph{Proof of Lemma~\ref{lem:poly}}
To prove~(\ref{eq:TL=LT+T}), it would be enough to expand both sides of the equation. We prefer to give a
proof based on differential calculations, because it is related to the method used in the rest of the proof.
First, let $I=\{1,3,5,\ldots\}$ and $J=\{2,4,6,\ldots\}$ be the sets of odd and even integers, respectively.
Using the fact that, for $Q\in\RR[X]$.
$$
Q(X+1)=\sum_{n\geq 0}\frac{Q^{(n)}(X)}{n!}\quad\mbox{and}\quad Q(X-1)=\sum_{n\geq
  0}\frac{(-1)^nQ^{(n)}(X)}{n!},
$$
where $Q^{(n)}$ denotes the $n$-th derivative of $Q$, one has
\begin{align}
  T & =\sum_{p\in J}\frac{1}{p!}\left(X\frac{\partial^p}{\partial X^p}+Y\frac{\partial^p}{\partial
      Y^p}\right), \label{eq:T} \\ L & =\sum_{q\in I}\frac{1}{q!}\left(X\frac{\partial^q}{\partial
      X^q}+Y\frac{\partial^q}{\partial Y^q}\right). \label{eq:L}
\end{align}
Since for all $p,q\in\NN$
$$
X\frac{\pa^p}{\pa X^p}\left(X\frac{\pa^q}{\pa X^q}\right)=X^2\frac{\pa^{p+q}}{\pa
  X^{p+q}}+pX\frac{\pa^{p+q-1}}{\pa X^{p+q-1}},
$$
one easily checks that
$$
TL-LT=\sum_{p\in J,q\in I}\frac1{p!}\frac1{q!}(p-q)\left(X\frac{\partial^{p+q-1}}{\partial
  X^{p+q-1}}+Y\frac{\partial^{p+q-1}}{\partial Y^{p+q-1}}\right).
$$
Now, for all $n\in\NN$,
\begin{align*}
  \sum_{p\in J,\,q\in I,\, p+q=2n+1}\frac1{p!}\frac1{q!}(p-q) &
  =\sum_{p\in I,\,q\in I,\, p+q=2n}\frac1{p!}\frac1{q!}-\sum_{p\in J,\,q\in J\cup\{0\},\,
    p+q=2n}\frac1{p!}\frac1{q!}\\
  & =-\sum_{0\leq p\leq 2n}\frac1{p!}\frac1{(2n-p)!}(-1)^p+\frac1{(2n)!} \\
  & =\lt((1-1)^{2n}+1\rt)\frac1{(2n)!}=\frac1{(2n)!}.
\end{align*}
This completes the proof of Lemma~\ref{lem:poly}~(a).

To prove~(b), let ${\cal H}_p$ be the subspace of ${\cal P}_d$ composed of all homogeneous polynomials of
degree $d$. Recall the definition~(\ref{eq:def-tilde-T}) of the operator $\wi{T}$ on $\RR[X,Y]$, and
observe that, for all $d\geq 1$, $\wi{T}$ is a linear map from ${\cal H}_d$ to ${\cal H}_{d-1}$. Now, the
family $\{(X-Y)^k(X+Y)^{d-k},0\leq k\leq d\}$ forms a basis of ${\cal H}_d$. Hence, any $P\in{\cal H}_d$ can
be written in the form $P(X,Y)=(X+Y)^dh\left(\frac{X-Y}{X+Y}\right)$, where $h\in\RR[X]$ has degree $d$. With
this notation, it can be checked that
\begin{multline*}
  \frac{\pa^2 P}{\pa X^2}(X,Y)=d(d-1)(X+Y)^{d-2}h(W) \\
  +4(d-1)Y(X+Y)^{d-3}h'(W)+4Y^2(X+Y)^{d-4}h''(W),  
\end{multline*}
where $W=(X-Y)/(X+Y)$, and similarly for the second variable. This yields
$$
\wi{T}P=(X+Y)^{d-1}\left(d(d-1)h(W)+4\frac{XY}{(X+Y)^2}h''(W)\right).
$$
Using the relation $4XY/(X+Y)^2=1-W^2$, we finally obtain that $P\in\text{Ker}(\wi{T})\cap{\cal H}_d$ if
and only if $h$ solves~(\ref{eq:ODE-Hd}). By Proposition~\ref{prop:poly-ortho}, for all $d\not=1$, this
equation has a unique (up to a multliplicative constant) polynomial solution, which has degree $d$. Since
$\text{dim}({\cal H}_d)=d+1$, we deduce that $\wi{T}:{\cal H}_d\rightarrow{\cal H}_{d-1}$ is surjective for
all $d\geq 2$.  If $d=1$, $\text{Ker}(\wi{T})\cap{\cal H}_1={\cal H}_1$ which has dimension 2.

Now, let $P=[P]_1+\ldots+[P]_d\in{\cal P}_d$ and observe that any $k$-th order derivative of $[P]_m$ belongs
to ${\cal H}_{m-k}$ if $k\leq m$. Therefore, by~(\ref{eq:T}), the equation $TP=0$ is equivalent to the fact
that, for all $0\leq n\leq d-1$,
$$
[TP]_n=\sum_{p\geq 1} \frac1{(2p)!}\left(X\frac{\pa^{2p}}{\pa X^{2p}}
  +Y\frac{\pa^{2p}}{\pa Y^{2p}}\right)[P]_{n+2p-1}=0,
$$
or, equivalently,
\begin{equation}
  \label{eq:T-tilde}
  \wi T [P]_{n+1}=-2\sum_{p\geq 2} \frac1{(2p)!}\left(X\frac{\pa^{2p}}{\pa X^{2p}}
    +Y\frac{\pa^{2p}}{\pa Y^{2p}}\right)[P]_{n+2p-1}.
\end{equation}
If $n\geq 1$ and $[P]_{n+3},[P]_{n+5},\ldots$ are given, there is a one-dimensional affine space of solution
for this equation. If $n=0$,~(\ref{eq:T-tilde}) is automatically satisfied, since both sides are 0. Therefore,
choosing recursively $[P]_d,[P]_{d-1},\ldots,[P]_2$ and setting any value to $[P]_1$ and $[P]_0$, the result
on the dimension of ${\cal D}_d$ easily follows.\hfill$\Box$\bigskip

\paragraph{Proof of Theorem~\ref{thm:poly}}
Fix $d\geq 0$. We claim that, as a linear operator on ${\cal P}_d$, $L$ is diagonalizable and its spectrum
$\text{Sp}_{{\cal P}_d}(L)=\{0,1,\ldots,d\}$. To see this, fix $\lambda\in\text{Sp}_{{\cal P}_d}(L)$ and $P$
an eigenvector for this eigenvalue, with degree $p$. Writing as in the proof of Lemma~\ref{lem:poly}
$P=[P]_p+\ldots+[P]_0$, the equation $LP=\lambda P$ is equivalent to the fact that, for $0\leq n\leq p$
\begin{equation}
  \label{eq:P-tilde}
  \lambda[P]_n=\sum_{q\geq 0}\frac{1}{(2q+1)!}\left(X\frac{\pa^{2q+1}}{\pa X^{2q+1}}
    +Y\frac{\pa^{2q+1}}{\pa Y^{2q+1}}\right)[P]_{n+2q}.
\end{equation}
Now, for any $Q\in{\cal H}_k$, one has
\begin{equation}
  \label{eq:homogene}
  \wi{L}Q=X\frac{\pa Q}{\pa X}+Y\frac{\pa Q}{\pa Y}=k Q.  
\end{equation}
Therefore,~(\ref{eq:P-tilde}) for $n=p$ imposes $\lambda=p$, and for $n=p-1$, $[P]_{p-1}=0$. Moreover, for
$0\leq n\leq p-2$,~(\ref{eq:P-tilde}) is equivalent to
\begin{equation}
  \label{eq:P-tilde-bis} 
  (p-n)[P]_n=\sum_{q\geq 1}\frac{1}{(2q+1)!}\left(X\frac{\pa^{2q+1}}{\pa X^{2q+1}}
    +Y\frac{\pa^{2q+1}}{\pa Y^{2q+1}}\right)[P]_{n+2q},
\end{equation}
which allows one to compute recursively $[P]_{p-2},\ldots,[P]_0$ given any $[P]_p\in{\cal H}_p$ and
$[P]_{p-1}=0$. Since $\text{dim}({\cal H}_p)=p+1$, the eigenspace corresponding to the eigenvalue $p$ of $L$
has dimension $p+1$.

Now, it follows from~(\ref{eq:TL=LT+T}) that $L$ is a linear operator from ${\cal D}_d$ to ${\cal D}_d$. Since
$L$ is diagonalizable in ${\cal P}_d$, it is also diagonalizable on any stable subspace, and $\text{Sp}_{{\cal
    D}_d}(L)\subset\{0,\ldots,d\}$.

Let $p\in\{2,\ldots,d\}$ and assume that there exists $P\in{\cal D}_d\setminus\{0\}$ satisfying
$LP=pP$. Again, $\text{deg}(P)=p$ necessarily. Writing $P=[P]_p+\ldots+[P]_0$ again, since $T(P)=0$, we have
$$
\wi{T}[P]_p=X\frac{\pa^2[P]_p}{\pa X^2}+Y\frac{\pa^2[P]_p}{\pa Y^2}=0,
$$
which has a one-dimensional vector space of solutions in ${\cal H}_p$. Once $[P]_p$ is fixed and since we have
$[P]_{p-1}=0$,~(\ref{eq:P-tilde-bis}) can be used recursively to compute $[P]_{p-2},\ldots,[P]_0$. In
conclusion, the eigenspace of $L$ in ${\cal D}_d$ for the eigenvalue $p$ is either of dimension 1 or 0. Now,
$L$ is diagonalizable in ${\cal D}_d$. Since $\text{dim}({\cal D}_d)=d+2$ and $\text{dim}({\cal
  H}_0)+\text{dim}({\cal H}_1)=3$, the only possibility is that $\text{Sp}_{{\cal D}_d}(L)=\{0,1,\ldots,d\}$
and that each eigenvalue $p\not=1$ has a one-dimensional vector space of solutions, and the eigenvalue $1$ has
a two-dimensional vector space of solutions.

This easily implies Theorem~\ref{thm:poly}, except for the expression of $-2H'_d(1)$. This
can be easily obtained from~(\ref{eq:Gegen}) and the relation $H'_d(1)=(-1)^{d-1}H'_d(-1)$
which follows from the parity property of $H_d$ stated in Proposition~\ref{prop:poly-ortho}.
% fact that all nonzero solution of~(\ref{eq:poly}) for $d\geq 2$ has nonzero
% $(d-1,1)$-coefficient. This last point can be checked as follows. 
% Now, by Proposition~\ref{prop:poly-ortho}, 1 is a simple root of $H_d$. Therefore, $H_d'(1)\not=0$, and the
% choice $h=H_d$ completes the proof of Theorem~\ref{thm:poly}.
\hfill$\Box$\bigskip

\paragraph{Proof of Proposition~\ref{prop:Pd}}
Recall from the proof of Lemma~\ref{lem:poly} that a nonzero solution $P$ of~(\ref{eq:poly})
for $d\geq 2$ satisfies
$$
[P]_d(X,Y)=\sum_{n=0}^d b_n(X-Y)^n(X+Y)^{d-n},
$$
where $h(x)=\sum_{n=0}^d b_n x^n$ is a polynomial solution of~(\ref{eq:ODE-Hd}). Therefore, the
$(d-1,1)$-coefficient $a_{d-1,1}$ of $P$ is given by
$$
a_{d-1,1}=\sum_{n=0}^d b_n(-n+d-n)=dh(1)-2h'(1)=-2h'(1),
$$
and Point~(a) then follows from Proposition~\ref{prop:poly-ortho} and the value of the
$(d-1,1)$ coefficient of $P_d$.

Observe that any polynomial solution of~(\ref{eq:ODE-Hd}) with $d\geq
2$ is divisible by $(X-1)(X+1)$. As a consequence of the previous construction, any polynomial $P$ such
that~(\ref{eq:poly}) holds satisfies that $XY$ divides $[P]_d$. Note also that $[P]_{d-1}=0$, which
implies~(b) by~(\ref{eq:P-tilde-bis}). Moreover,~(\ref{eq:P-tilde-bis}) also implies by induction that $XY$
divides $[P]_{d-2k}$ for all $0\leq k\leq d/2$, which yields the first part of~(c).

By Proposition~\ref{prop:poly-ortho},~(d) is true for $[P_d]_d$ and of course for $[P_d]_{d-1}=0$. Now, assume
that $P\in\RR[X,Y]$ satisfies $P(Y,X)=P(-X,-Y)=(-1)^\alpha P(X,Y)$. Then it can be easily checked that, for
all $k\geq 1$,
$$
Q(X,Y) \df X\frac{\pa^k P}{\pa X^k}(X,Y)+Y\frac{\pa^k P}{\pa Y^k}(X,Y)
$$
satisfies
$$
Q(Y,X)=(-1)^\alpha Q(X,Y) \quad\mbox{and}\quad Q(-X,-Y)=(-1)^{\alpha+k+1}Q(X,Y).
$$
Therefore,~(d) easily follows from~(\ref{eq:P-tilde-bis}) by induction.

Now, fix $d$ odd. By Proposition~\ref{prop:poly-ortho}, $H_d$ is odd and thus $H_d(X)$ is divisible by
$X$. This implies that $[P]_d$ is divisible by $X-Y$. Moreover, it follows from~(d) that the polynomial
$[P]_d/(X-Y)$ is symmetric. Now, let $Q(X,Y)=X^nY^m+X^mY^n$ for some $n,m\geq 0$, and fix $k\geq 0$. Since
\begin{align*}
  X\frac{\pa^k Q}{\pa X^k}-Y\frac{\pa^k Q}{\pa Y^k} & =n(n-1)\ldots(n-k+1)(X^{n-k+1}Y^m-X^mY^{n-k+1}) \\ &
  +m(m-1)\ldots(m-k+1)(X^{m-k+1}Y^n-X^nY^{m-k+1}),
\end{align*}
the polynomial $X-Y$ divides $X\frac{\pa^k Q}{\pa X^k}-Y\frac{\pa^k Q}{\pa Y^k}$. Since this holds for any
$n,m\geq 0$, the same is true for all $Q$ such that $Q(X,Y)=Q(Y,X)$. Now, any polynomial of the form
$P(X,Y)=(X-Y)Q(X,Y)$ with $Q$ symmetric satisfies
\begin{multline*}
  X\frac{\pa^k P}{\pa X^k}(X,Y)+Y\frac{\pa^k P}{\pa Y^k}(X,Y)
  =kX\frac{\pa^{k-1} Q}{\pa X^{k-1}}(X,Y)-kY\frac{\pa^{k-1} Q}{\pa Y^{k-1}}(X,Y) \\
  +(X-Y)\left(X\frac{\pa^k Q}{\pa X^k}(X,Y)+Y\frac{\pa^k Q}{\pa Y^k}(X,Y)\right).
\end{multline*}
In particular, $X-Y$ divides $X\frac{\pa^k P}{\pa X^k}+Y\frac{\pa^k P}{\pa Y^k}$. Therefore, the fact that
$[P]_i$ is divisible by $X-Y$ for $i<d$ follows form~(\ref{eq:P-tilde-bis}) by induction. This ends the proof
of~(c).

As a consequence of~(c), $P_d(i,0)=P_d(0,j)=0$ for any $i,j\in\mathbb{Z}$ for $d\geq
2$. Applying~(\ref{eq:poly-2}) for $(X,Y)=(d-1,1)$ yields $P_d(d-2,1)=0$. By induction,
applying~(\ref{eq:poly-2}) for $(X,Y)=(d-k,1)$ implies $P_d(d-k-1,1)=0$ for all
$k\in\{1,\ldots,d-2\}$. Similarly, applying~(\ref{eq:poly-2}) for $(X,Y)=(d-1-k,2)$ implies $P_d(d-2-k,2)=0$
for all $k\in\{1,\ldots,d-3\}$. Point~(e) is therefore straightforward by induction.

For all $d\leq k$, the polynomial $Q_{k,d}(X)=P_k(X,d-X)$ satisfies $[Q_{k,d}]_k=\left[d^k
  H_k\left(\frac{2X-d}{d}\right)\right]_k\not=0$ for all $k\geq 2$. Therefore, $\text{deg}(Q_{k,d})=k$ for all
$k\geq 0$, and $\{Q_{0,d},Q_{1,d},\ldots,Q_{d,d}\}$ is a basis of ${\cal P}'_d \df \{Q\in\RR[X]:\text{deg}(Q)\leq
d\}$. Since $\varphi(Q)=(Q(0),\ldots,Q(d))$ defines a linear isomorphism from ${\cal P}'_d$ to $\RR^{d+1}$, we
deduce that $\{\varphi(Q_{0,d}),\ldots,\varphi(Q_{d,d})\}$ is a basis of $\RR^{d+1}$, which is equivalent
to~(f).

% Fix $d\geq 2$ and assume that $P_d(X,Y)=Q(X+Y)R(X,Y)$ with $Q\in\RR[X]$ and $R\in\RR[X,Y]$, where $R(X,Y)$ is
% not divisible by any non-constant polynomial in $X+Y$. It follows from~(\ref{eq:poly}) that
% \begin{align*}
%   Q(Z+1)\left(XR(X+1,Y)+YR(X,Y+1)\right) & =(Z+d)Q(Z)R(X,Y) \\
%   Q(Z-1)\left(XR(X-1,Y)+YR(X,Y-1)\right) & =(Z-d)Q(Z)R(X,Y),
% \end{align*}
% where $Z=X+Y$. In particular, $Q(Z+1)$ divides $(Z+d)Q(Z)R(X,Y)$. Since $\RR[X,Y]$ is a factorial ring (as
% $\RR$ is), the assumption on $R$ implies that $Q(Z+1)$ divides $(Z+d)Q(Z)$. Moreover, as a polynomial in
% $\RR[Z]$, $(Z+d)Q(Z)/Q(Z+1)$ must have degree 1 and higher degree coefficient 1. Using a similar argument for
% the second line, there exists $a,b\in\RR$ such that
% \begin{align*}
%   XR(X+1,Y)+YR(X,Y+1) & =(X+Y+a)R(X,Y) \\
%   XR(X-1,Y)+YR(X,Y-1) & =(X+Y+b)R(X,Y),
% \end{align*}
% or, equivalently,
% \begin{equation*}
%   L R=\frac{a-b}{2}R \quad\text{and}\quad T R=\frac{a+b}{2}R.
% \end{equation*}
% Let $p=\text{deg}(R)$. Since $\text{deg}(TR)\leq p-1$, we have $a=-b$. Since moreover $[LR]_p=X\frac{\pa
%   [R]_p}{\pa X}+Y\frac{\pa [R]_p}{\pa Y}$, it follows from~(\ref{eq:homogene}) that $a-b=p$. In other words,
% $R(X,Y)=P_p(X,Y)$ (up to a multiplicative constant). Assume that $p<d$. From the relation
% $(Z+d)Q(Z)=(Z+p)Q(Z+1)$, we deduce that $Z+d-1$ divides $Q(Z)$. Since $(Z-d)Q(Z)=(Z-p)Q(Z-1)$, we inductively
% deduce that $Z+d$, $Z+d+1$,\ldots also divide $Q(Z)$. This gives a contradition. Therefore, $p=d$, and~(g)
% holds.

Point~(g) is a simple consequence of points~(e) and~(f) and of formula~(\ref{eq:poly-1}) with $X=j$ and
$Y=d-j-1$.

Because of point~(e) above,~(h) is obvious if $k\leq d-1$ or $k\leq d'-1$. So let us assume that $d,d'\leq
k$. Multiplying~(\ref{eq:poly-1}) by $(X+Y-d+1)$ and applying~(\ref{eq:poly-2}) to both terms on the l.h.s.\
yields
\begin{equation}
  \label{eq:utile-Moran-3}
  (2XY-d(d-1))P_d(X,Y)=XY(P_d(X+1,Y-1)+P_d(X-1,Y+1)).  
\end{equation}
This means that, for all $k\geq 2$ and $2\leq d\leq k$, the vector $(P_d(i,k-i))_{1\leq i\leq k-1}$ is a right
eigenvector of the matrix $A_k=(a^{(k)}_{i,j})_{1\leq i,j\leq k-1}$ for the eigenvalue $-d(d-1)$, where
\begin{align*}
  a^{(k)}_{i,i} =-2i(k-i) & \text{\ for\ }1\leq i\leq k-1, \\
  a^{(k)}_{i,i+1} =i(k-i) & \text{\ for\ }2\leq i\leq k-1, \\
  a^{(k)}_{i,i-1} =i(k-i) & \text{\ for\ }1\leq i\leq k-2 \\
  \text{and\ }a^{(k)}_{ij} =0 & \text{\ for\ }|i-j|\geq 2.
\end{align*}
It is straightforward to check that the matrix $A_k$ is self-adjoint for the inner product
$\langle\cdot,\cdot\rangle_\mu$, where $\mu_i=1/i(k-i)$, which implies that two right eigenvectors of $A_k$
corresponding to different eigenvalues are orthogonal w.r.t.\ this inner product. This
yields~(\ref{eq:orth-Pd}) for $d\not=d'$.

Finally, fix $2\leq d\leq k$. Using~(\ref{eq:poly-1}) and~(\ref{eq:poly-2}), we have
\begin{multline*}
  (k+d)\sum_{i=1}^{k-1}\frac{P_d(i,k-i)^2}{i(k-i)} \\
  \begin{aligned}
    & =\sum_{i=1}^{k-1}P_d(i,k-i)\frac{P_{d}(i+1,k-i)}{k-i}+\sum_{i=1}^{k-1}P_d(i,k-i)\frac{P_{d}(i,k-i+1)}{i} \\
    & =\sum_{i=1}^{k}P_d(i,k-i+1)\frac{P_d(i-1,k-i+1)}{k-i+1}+\sum_{i=1}^{k}P_d(i,k-i+1)\frac{P_d(i,k-i)}{i} \\
    & =(k-d+1)\sum_{i=1}^{k}\frac{P_d(i,k-i+1)^2}{i(k-i+1)}.
  \end{aligned}
\end{multline*}
Applying this equality inductively, we deduce that
$$
\sum_{i=1}^{k-1}\frac{P_d(i,k-i)^2}{i(k-i)}=C\binom{k+d-1}{2d-1}
$$
for some constant $C$.

Now, by point~(a) and Proposition~\ref{prop:poly-ortho}, we have
$$
\frac{1}{k^{2d-1}}\sum_{i=1}^{k-1}\frac{P^2_d(i,k-i)}{i(k-i)}
\sim\frac{4}{k}\:\sum_{i=1}^{k-1}\frac{H_d^2\left(\frac{2i-k}{k}\right)}
{1-\left(\frac{2i-k}{k}\right)^2}\longrightarrow 2\int_{-1}^1\frac{H^2_d(x)}{1-x^2}dx=2
$$
as $k\rightarrow+\infty$. Thus $C=2$ and the proof of~(h) is completed.\hfill$\Box$\bigskip

\paragraph{Proof of Proposition~\ref{prop:explicit-Pd}}

In~\cite{karlin-mcgregor-75}, the authors construct a family of functions of two variables
satisfying relations close to~(\ref{eq:poly}), which they use to study neutral, multitype
population processes with non-zero mutation or immigration. These functions are expressed in
terms of the Hahn polynomials, defined for fixed parameters $\alpha>-1$, $\beta>-1$ and
$N\in\mathbb{N}$ by
\begin{equation}
  \label{eq:Hahn-poly}
  Q_d(x;\alpha,\beta,N)={_3}F_2\biggl(\begin{matrix}-d,\ -x,\ d+\alpha+\beta+1\\ \alpha+1,\ -N+1\end{matrix};1\biggr),
\end{equation}
for all integer $d\geq 0$. Karlin and McGregor proved that the rational function
$$
\phi_d(X,Y)=Q_d(X;\alpha,\beta,X+Y+1)=\sum_{k=0}^d\frac{(-d)_k(-X)_k(d+\alpha+\beta+1)_k}{(\alpha+1)_k(-X-Y)_kk!}
$$
satisfies
\begin{gather}
  \label{eq:Karlin-McGregor-1}
  (X+\alpha+1)\phi_d(X+1,Y)+(Y+\beta+1)\phi_d(X,Y+1)=(X+Y)\phi_d(X,Y)
  \\  \label{eq:Karlin-McGregor-2}
  \begin{aligned}
    & X\phi_d(X-1,Y)+Y\phi_d(X,Y-1) \\ & \qquad\qquad=\frac{(X+Y+1-d)(X+Y+d+\alpha+\beta+2)}{X+Y+1}\phi_d(X,Y).
  \end{aligned}
\end{gather}
% This suggests to recover the polynomials $P_d$ by studying the limits $\alpha\rightarrow-1$
% and $\beta\rightarrow-1$. 
Let us define
\begin{align*}
  \psi_d(X,Y) &
  =(-X-Y)_d\lim_{\beta\rightarrow-1}\lim_{\alpha\rightarrow-1}(\alpha+1)\phi_d(X,Y) \\
  & =(-X-Y)_d\sum_{k=1}^d\frac{(-d)_k(-X)_k(d-1)_k}{(k-1)!(-X-Y)_kk!}.
\end{align*}
Passing to the limit in~(\ref{eq:Karlin-McGregor-1}) and~(\ref{eq:Karlin-McGregor-2}) proves
that $\psi_d$ satisfies~(\ref{eq:poly}). Since $\psi_d$ is a polynomial,
Theorem~\ref{thm:poly} entails~(\ref{eq:explicit-Pd}). It only remains to check that
$C_d\psi_d(X,Y)$ has its $(d-1,1)$ coefficient equal to
$(-1)^d2\sqrt{d(d-1)(2d-1)}$. The $(d-1,1)$ coefficient of
$\psi_d(X,Y)$ is
\begin{align*}
  (-1)^d\sum_{k=1}^d\frac{(-d)_k(d-1)_k}{(k-1)!k!}(d-k)
   & =(-1)^{d+1}d(d-1)^2\sum_{k=1}^{d}\frac{(-d+2)_{k-1}(d)_{k-1}}{(k-1)!k!} \\ & =(-1)^{d+1} d(d-1)
  {_2}F_1\biggl(\begin{matrix}-d+2,\ d\\ 2\end{matrix};1\biggr).
\end{align*}
The Chu-Vandermonde formula (cf.\ e.g.~\cite{dunkl-xu-01}) implies that
$$
{_2}F_1\biggl(\begin{matrix}-d+2,\ d\\ 2\end{matrix};1\biggr)=(-1)^d\frac{(d-2)!}{(d-1)!}=\frac{(-1)^d}{d-1},
$$
which gives the expression of $C_d$.\hfill$\Box$

\section{Spectral decomposition of neutral two-dimensional Markov chains}
\label{sec:diago}

In this section, we consider neutral extensions of the two-dimensional birth and death chains in $\ZZ^2$
described in the introduction. In the sequel, this family will be called N2dMC, for neutral two-dimensional
Markov chains.

A N2dMC $(X_t,Y_t)_{t\in\ZZ_+}$ is constructed by specifying first the Markov dynamics of $Z_t=X_t+Y_t$ in
$\ZZ_+$. Assume that its transition matrix is
$$
\Pi_0=(p_{n,m})_{n,m\geq 0},
$$
where $\sum_{m\geq 0}p_{n,m}=1$ for all $n\geq 1$ and the state $0$ is absorbing ($p_{0,0}=1$). Then, the
process $(X_t,Y_t)_{t\in\ZZ_+}$ is constructed as follows: if there is a birth at time $t$ (i.e.\ if
$Z_{t+1}>Z_t$), the types of the new individuals are \emph{successively} picked at random in the population;
if there is a death at time $t$ (i.e.\ if $Z_{t+1}<Z_t$), the types of the killed individuals are
\emph{successively} picked at random in the population; finally, if $Z_{t+1}=Z_t$, then $X_{t+1}=X_t$ and
$Y_{t+1}=Y_t$.

For example, the transition probability from $(i,j)$ to $(i+k,j+l)$ for $(i,j)\in\ZZ_+^2$ and $k,l\geq 0$,
$k+l\geq 1$ is
$$
\binom{k+l}{k}\frac{i(i+1)\ldots(i+k-1)\:j(j+1)\ldots(j+l-1)}{(i+j)(i+j+1)\ldots(i+j+k+l-1)}\:p_{i+j,i+j+k+l}.
$$
After some algebra, one gets the following formulas for the transition probabilities: for all $l\geq 0$ and
$k\geq 0$ such that $l+k\geq 1$, the Markov chain $(X_n,Y_n)_{n\geq 0}$ has transitions from $(i,j)\in\ZZ_+^2$
to
\begin{equation}
  \label{eq:neutral-trans}
  \begin{array}{rrcl}
    (i+k,j+l) \mbox{\ w.\ p.} & \pi_{(i,j),(i+k,j+l)} &  \df  &  \displaystyle{
      \frac{\binom{i+k-1}{k}\binom{j+l-1}{l}}{\binom{i+j+k+l-1}{k+l}}\:p_{i+j,\:i+j+k+l}} \\
    (i-k,j-l) \mbox{\ w.\ p.} & \pi_{(i,j),(i-k,j-l)} &  \df  & \displaystyle{
      \frac{\binom{i}{k}\binom{j}{l}}{\binom{i+j}{k+l}}\:p_{i+j,\:i+j-k-l}} \\
    (i,j) \mbox{\ w.\ p.} & \pi_{(i,j),(i,j)} &  \df  & p_{i+j,\:i+j},
  \end{array}
\end{equation}
with the convention that $\binom{i}{j}=0$ if $i<0$, $j<0$ or $j>i$. In particular, once one component of the
process is 0, it stays zero forever. We denote by
$$
\Pi \df (\pi_{(i,j),(k,l)})_{(i,j),(k,l)\in\ZZ_+^2}
$$
the transition matrix of the Markov process $(X,Y)$.

The state space of the Markov chain $Z$ will be denoted by ${\cal S}_Z$, and the state space of $(X,Y)$ by
${\cal S}$. We are going to consider two cases: the case where $Z$ has finite state space ${\cal
  S}_Z \df \{0,1,\ldots,N\}$ for some $N\geq 0$, and the case where $Z$ has infinite state space ${\cal
  S}_Z \df \ZZ_+$. In the first case, the state space of $(X,Y)$ is the set
\begin{equation}
  \label{eq:def-T_N}
  {\cal T}_N \df \{(i,j)\in\ZZ^2_+:i+j\leq N\}.
\end{equation}
In the second case, ${\cal S} \df \ZZ_+^2$. 

We also define the sets ${\cal S}^* \df {\cal S}\cap\NN^2$, ${\cal T}_N^* \df {\cal T}_N\cap\NN^2$ and ${\cal
  S}_Z^* \df {\cal S}_Z\setminus\{0\}$. Finally, let $\wi{\Pi}_0$ be the restriction of the matrix $\Pi_0$ to
${\cal S}_Z^*$ (i.e.\ the matrix obtained from $\Pi_0$ by suppressing the row and column of index $0$) and let
$\wi{\Pi}$ be the restriction of the matrix $\Pi$ to ${\cal S}^*$. 

Extending the usual definition for Markov chains, we say that a matrix $M=(m_{ij})_{i,j\in{\cal A}}$ is
reversible with respect to the measure $(\mu_i)_{i\in{\cal A}}$ if $\mu_i>0$ and $\mu_im_{ij}=\mu_jm_{ji}$ for
all $i,j\in{\cal A}$.

For all $d\geq 0$, we define
$$
{\cal V}_d:=\{v\in\RR^{\cal S}:v_{i,j}=P_d(i,j)u_{i+j}\text{\ with\ }u\in\RR^{{\cal S}_Z}\},
$$
where we recall that the polynomials $P_d$ are defined in Theorem~\ref{thm:poly} and
$P_1=P_1^{(1)}$. Note that, by Proposition~\ref{prop:Pd}~(e), a vector
$v_{i,j}=P_d(i,j)u_{i+j}\in {\cal V}_d$ is characterized by the values of $u_k$ for $k\geq d$
only.

For all $d\geq 0$, we also define the matrix $\Pi_d \df (p^{(d)}_{n,m})_{(n,m)\in {\cal
    S},\:n\geq d,\:m\geq d}$, where for all $(n,m)\in {\cal S}$ such that $n\geq d$ and $m\geq
d$,
\begin{equation}
  \label{eq:def-M_d}
  p^{(d)}_{n,m} \df 
  \begin{cases}
    \displaystyle{\frac{\binom{m+d-1}{m-n}}{\binom{m-1}{m-n}}\:p_{n,m}} & \text{if\ }m>n, \\
    \displaystyle{\frac{\binom{n-d}{n-m}}{\binom{n}{n-m}}\:p_{n,m}} & \text{if\ }m<n, \\
    p_{n,n} & \text{if\ }m=n.
  \end{cases}
\end{equation}

All these notation, as well as those introduced in the rest of the paper, are gathered for convenience in
Appendix~\ref{sec:nota}.
\medskip

The following result is the basis of all results in this section.

\begin{prop}
  \label{prop:stabilite}
  For all $d\geq 0$, the vector space ${\cal V}_d$ is stable for the matrix $\Pi$. In
  addition, for all $v_{i,j}=P_d(i,j)u_{i+j}\in{\cal V}_d$,
  $$
  (\Pi v)_{i,j}=P_d(i,j)(\Pi_d u)_{i+j}.
  $$
\end{prop}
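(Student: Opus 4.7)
The plan is to verify directly the stated formula $(\Pi v)_{i,j} = P_d(i,j)(\Pi_d u)_{i+j}$, from which stability of $\cV_d$ under $\Pi$ follows at once. Fix $(i,j)\in\cS$ and set $v_{k,l}=P_d(k,l)u_{k+l}$. Grouping the contributions to $(\Pi v)_{i,j}$ coming from \eqref{eq:neutral-trans} according to $m=i+j+(k+l)$ in the birth part, $m=i+j-(k+l)$ in the death part, and the trivial loop term, the target formula reduces to proving the two sum identities
\begin{align*}
\sum_{k+l=s,\ k,l\geq 0} \binom{i+k-1}{k}\binom{j+l-1}{l} P_d(i+k,j+l) &= \binom{i+j+d+s-1}{s} P_d(i,j), \\
\sum_{k+l=s,\ k,l\geq 0} \binom{i}{k}\binom{j}{l} P_d(i-k,j-l) &= \binom{i+j-d}{s} P_d(i,j),
\end{align*}
for every $(i,j)\in\cS$ and every $s\geq 1$. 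Inserting these equalities back produces exactly the ratios $\binom{m+d-1}{m-n}/\binom{m-1}{m-n}$ for $m>n$ and $\binom{n-d}{n-m}/\binom{n}{n-m}$ for $m<n$ appearing in \eqref{eq:def-M_d}, with $n=i+j$.

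For $s=1$ the first identity is \eqref{eq:poly-1} and the second is \eqref{eq:poly-2}, so the base case is built into the very definition of $P_d$. For general $s$, I would introduce the auxiliary operators $A$ and $A'$ on $\RR[X,Y]$ defined by $AQ(X,Y)=XQ(X+1,Y)+YQ(X,Y+1)$ and $A'Q(X,Y)=XQ(X-1,Y)+YQ(X,Y-1)$, so that the defining relations of Theorem~\ref{thm:poly} read $AP_d=(X+Y+d)P_d$ and $A'P_d=(X+Y-d)P_d$. A weighted path count---at each of the $s$ steps one chooses which coordinate to increment (resp.\ decrement), weighted by its current value, and every path from $(i,j)$ to a fixed endpoint $(i+k,j+l)$ contributes the same product of rising (resp.\ falling) factorials---shows that $A^s Q$ (resp.\ $A'^s Q$) evaluated at $(i,j)$ equals $s!$ times the left-hand side of the first (resp.\ second) identity above.

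The final step is to compute $A^s P_d$ and $A'^s P_d$ by induction. A direct check yields the commutation relations $A\bigl((X+Y)Q\bigr)=(X+Y+1)AQ$ and $A'\bigl((X+Y)Q\bigr)=(X+Y-1)A'Q$, valid for any polynomial $Q$. Combined with the base relations above, these give
\[
A^s P_d=\prod_{r=0}^{s-1}(X+Y+d+r)\cdot P_d, \qquad A'^s P_d=\prod_{r=0}^{s-1}(X+Y-d-r)\cdot P_d,
\]
by immediate induction on $s$. Evaluating at $(i,j)$ and dividing by $s!$ produces exactly the desired right-hand sides $\binom{i+j+d+s-1}{s}P_d(i,j)$ and $\binom{i+j-d}{s}P_d(i,j)$, completing the reduction.

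No step presents a serious obstacle; the argument is essentially mechanical once one recognises that \eqref{eq:poly-1}--\eqref{eq:poly-2} are exactly the $s=1$ cases of the required sum identities, and that the higher cases follow by iterating $A$ and $A'$ with an elementary commutator with multiplication by $X+Y$. The only minor care needed concerns boundary issues: when $i+j<d$ both sides of the target formula vanish by Proposition~\ref{prop:Pd}(e), and when $\cS_Z=\ZZ_+$ the sums in $(\Pi v)_{i,j}$ must be shown to converge absolutely, which follows from $\sum_m p_{n,m}=1$ together with the polynomial growth of $P_d$.
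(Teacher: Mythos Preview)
Your proof is correct and follows essentially the same route as the paper: both reduce the claim to the two combinatorial identities you display, and both prove these by iterating the defining relations \eqref{eq:poly-1}--\eqref{eq:poly-2}, which the paper states as ``using~\eqref{eq:poly-1} inductively'' and you formalize via the operators $A,A'$ and the commutation $A\bigl((X+Y)Q\bigr)=(X+Y+1)AQ$. Your presentation is a bit more explicit about the mechanism of the induction, but the underlying argument is the same.
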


\begin{proof}
% Note first that, because of Proposition~\ref{prop:Pd}~(e), we can define by convention $u_{i+j}$
% in~(\ref{eq:eigenvector-v}) to be zero if $i+j\leq d-1$.
%
Using~(\ref{eq:poly-1}) inductively, we have
\begin{multline*}
  \sum_{k=0}^n\binom{n}{k}X(X+1)\ldots(X+k-1)Y(Y+1)\ldots(Y+n-k-1)P_d(X+k,Y+n-k)
  \\ =(X+Y+d)(X+Y+d+1)\ldots(X+Y+d+n-1)P_d(X,Y)
\end{multline*}
for all $d\geq 0$ and $n\geq 1$, which can be written as
\begin{multline*}
  \sum_{k=0}^n\binom{X+k-1}{k}\binom{Y+n-k-1}{n-k}P_d(X+k,Y+n-k) \\ =\binom{X+Y+d+n-1}{n}P_d(X,Y)
\end{multline*}
for $(X,Y)\in\ZZ_+^2$. Similarly, an inductive use of~(\ref{eq:poly-2}) yields
$$
\sum_{k=0}^n\binom{X}{k}\binom{Y}{n-k}P_d(X-k,Y-n+k)=\binom{X+Y-d}{n}P_d(X,Y)
$$
for all $d\geq 0$, $n\geq 1$ and $(X,Y)\in\ZZ_+^2$, where we recall the convention $\binom{a}{b}=0$ if $a<0$,
$b<0$ or $b>a$.

Proposition~\ref{prop:stabilite} then easily follows from these equations and from the transition probabilities~(\ref{eq:neutral-trans}).
\end{proof}

\subsection{The case of finite state space}
\label{sec:finite}

\subsubsection{Eigenvectors of $\Pi$ for finite state spaces}
\label{sec:result-finite}

In the case where $Z$ has finite state space, the main result of this section is the following.

\begin{thm}
  \label{thm:diago-finite}
  Assume that ${\cal S}_Z={\cal T}_N$ for some $N\geq 0$.
  \begin{description}
  \item[\textmd{(a)}] For all $d\geq 0$ and all right eigenvector $(u_n)_{n\in {\cal S}_Z,\:n\geq d}$ of $\Pi_d$ for the
    eigenvalue $\theta$, the vector
    \begin{equation}
      \label{eq:eigenvector-v}
      v_{(i,j)}=P_d(i,j)u_{i+j},\quad (i,j)\in {\cal S}    
    \end{equation}
    is a right eigenvector for the eigenvalue $\theta$ of the matrix $\Pi$, where the polynomials $P_d$ are
    defined in Theorem~\ref{thm:poly} and where $P_1=P^{(1)}_1$.\\
    In addition, if $d\geq 2$, $v$ is also a right eigenvector for the eigenvalue $\theta$ of the matrix
    $\wi{\Pi}$.
  \item[\textmd{(b)}] All the right eigenvectors of $\Pi$ are of the
    form~(\ref{eq:eigenvector-v}), or possibly a linear combination of such eigenvectors in the case of
    multiple eigenvalues.
  \item[\textmd{(c)}] Assume that $\wi{\Pi}_0$ admits a positive reversible measure $(\mu_n)_{n\in {\cal
        S}^*_Z}$. Then, the matrix $\wi{\Pi}$ is reversible w.r.t.\ the measure
    \begin{equation}
      \label{eq:def-nu}
      \nu_{(i,j)} \df \frac{(i+j)\mu_{i+j}}{ij},\quad\forall(i,j)\in {\cal S}^*,
    \end{equation}
    and hence is diagonalizable in a basis orthonormal for this measure, composed of vectors of the
    form~(\ref{eq:eigenvector-v}) for $d\geq 2$.\\ In addition, $\Pi$ is diagonalizable in a basis of
    eigenvectors of the form~(\ref{eq:eigenvector-v}) for $d\geq 0$.
  \end{description}
\end{thm}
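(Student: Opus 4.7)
This part is immediate from Proposition~\ref{prop:stabilite}: if $\Pi_d u=\theta u$, then $v_{(i,j)}\df P_d(i,j)u_{i+j}$ satisfies $(\Pi v)_{(i,j)}=P_d(i,j)(\Pi_d u)_{i+j}=\theta v_{(i,j)}$. For $d\ge 2$, Proposition~\ref{prop:Pd}(c) says $P_d$ is divisible by $XY$, so $v$ vanishes outside $\cS^*$ and restricts to an eigenvector of $\widetilde\Pi$ with the same eigenvalue.

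\paragraph{Proof plan for (b).} The core ingredient is the direct-sum decomposition
\[
\RR^{\cT_N}=\cV_0\oplus\cV_1^{(1)}\oplus\bigoplus_{d=2}^N\cV_d,
\]
where $\cV_1^{(1)}$ denotes $\cV_1$ built from $P_1=P_1^{(1)}$. I would prove it level by level: setting $L_n\df\{(i,j)\in\cT_N:i+j=n\}$, Proposition~\ref{prop:Pd}(f) says that $\bigl\{(P_d(i,n-i))_{0\le i\le n}\bigr\}_{0\le d\le n}$ is a basis of $\RR^{L_n}$, so any $f\in\RR^{\cT_N}$ admits a unique expansion $f(i,j)=\sum_{d=0}^{N}P_d(i,j)u^{(d)}_{i+j}$; a dimension count $(N+1)+N+\sum_{d\ge 2}(N-d+1)=(N+1)(N+2)/2=|\cT_N|$ confirms the direct sum. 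Each $\cV_d$ is $\Pi$-stable by Proposition~\ref{prop:stabilite}, so for any eigenvector $v$ of $\Pi$ for $\theta$, the components $v^{(d)}$ of its decomposition are individually eigenvectors of $\Pi|_{\cV_d}$ (or zero), and each nonzero $v^{(d)}$ corresponds via $u\leftrightarrow v$ to an eigenvector of $\Pi_d$ for $\theta$. Eigenvectors built from $P_1^{(2)}$ in~(a) are covered because $P_1^{(2)}(i,j)u_{i+j}=(i+j)u_{i+j}-iu_{i+j}$ places $\cV_1^{(2)}\subset\cV_0+\cV_1^{(1)}$.

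\paragraph{Proof plan for (c).} Reversibility of $\widetilde\Pi$ with respect to $\nu$ is a direct check on the monotone transitions (the only ones with positive probability); using cancellations like $\binom{k-1}{k-i}/\binom{k}{k-i}=i/k$ and $\binom{k+l-1}{k+l-i-j}/\binom{k+l}{k+l-i-j}=(i+j)/(k+l)$, the detailed balance condition $\nu_{(i,j)}\pi_{(i,j),(k,l)}=\nu_{(k,l)}\pi_{(k,l),(i,j)}$ reduces to the assumed relation $\mu_{i+j}p_{i+j,k+l}=\mu_{k+l}p_{k+l,i+j}$. The spectral theorem then yields an orthonormal basis of $\widetilde\Pi$-eigenvectors in $\LL^2(\nu)$; by~(b) they are of the form~(\ref{eq:eigenvector-v}), and since they live on $\cS^*$ only the $d\ge 2$ terms (vanishing on the axes by Proposition~\ref{prop:Pd}(c)) contribute, with $\sum_{d\ge 2}(N-d+1)=N(N-1)/2=|\cT_N^*|$ confirming that a full basis is obtained. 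For $\Pi$ itself, by the decomposition from~(b) it suffices to check that each $\Pi_d$ is diagonalizable: for $d\ge 2$, transporting $\langle\cdot,\cdot\rangle_\nu$ back through $u\mapsto v$ and applying Proposition~\ref{prop:Pd}(h) shows that $\Pi_d$ is self-adjoint for the weight $n\mu_n\binom{n+d-1}{2d-1}$ on $\{d,\ldots,N\}$; for $d=1$, the explicit identity $p^{(1)}_{n,m}=(m/n)p_{n,m}$ makes $\Pi_1$ diagonally similar to $\widetilde\Pi_0$; and for $d=0$, the identity $\Pi_0\un=\un$ gives $b=(I-\widetilde\Pi_0)\un$, which is what matches algebraic and geometric multiplicities of each eigenvalue of $\Pi_0$ and ensures diagonalizability.

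\paragraph{Main obstacle.} The decisive step is the direct-sum decomposition in~(b), especially the careful handling of the two-dimensional $d=1$ eigenspace; Proposition~\ref{prop:Pd}(f) is what makes the decomposition work, and its companion orthogonality relation Proposition~\ref{prop:Pd}(h) is what produces the weighted inner products needed in~(c) to reduce the diagonalizability of each $\Pi_d$ to a self-adjointness statement.
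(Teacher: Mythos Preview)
Your proposal is correct and follows essentially the same route as the paper. Both arguments hinge on Proposition~\ref{prop:stabilite} for~(a), on Proposition~\ref{prop:Pd}(f) to establish that the family $\{P_d(i,j)u_{i+j}\}$ spans $\RR^{\cT_N}$ for~(b), and on reversibility plus self-adjointness of each $\Pi_d$ for~(c). The only presentational difference is that the paper phrases~(b) via Jordan normal forms of the $\Pi_d$ while you phrase it via the direct-sum decomposition $\RR^{\cT_N}=\bigoplus_d\cV_d$; these are equivalent.

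One small point to tighten: in~(c), when you write ``by~(b) they are of the form~(\ref{eq:eigenvector-v})'' for the $\widetilde\Pi$-eigenvectors, note that~(b) concerns $\Pi$-eigenvectors, and a $\widetilde\Pi$-eigenvector extended by zero is generally \emph{not} a $\Pi$-eigenvector. The clean fix is already implicit in what you write next: your dimension count $\sum_{d\ge 2}(N-d+1)=|\cT_N^*|$ shows that $\bigoplus_{d\ge 2}\cV_d|_{\cS^*}=\RR^{\cS^*}$, and since each summand is $\widetilde\Pi$-stable and orthogonal to the others in $\LL^2(\nu)$ by Proposition~\ref{prop:Pd}(h), the eigenvector decomposition for $\widetilde\Pi$ follows directly from that, without passing through~(b). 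Reordering your sentence accordingly removes the slip.
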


Hence, the right eigenvectors of the transition matrix of a finite N2dMC can be decomposed as the product of
two terms, one depending on each population size, but ``universal'' in the sense that it does not depend on
the transitions matrix $\Pi_0$ of $Z$, and the other depending on the matrix $\Pi_0$, but depending only on
the total population size.

\begin{rem}
  \label{rem:P_1}
  There is some redundancy among the right eigenvectors of $\Pi$ of the form $P(i,j)u_{i+j}$ for $P=P_0$,
  $P_1^{(1)}$ or $P_1^{(2)}$: if $u$ is a right eigenvector of $\Pi_1$, the vectors
  $$
  (iu_{i+j})_{(i,j)\in{\cal S}}\quad \text{and}\quad (ju_{i+j})_{(i,j)\in{\cal S}}
  $$
  are right eigenvectors of $\Pi$ for the same eigenvalue. In particular, $iu_{i+j}+ju_{i+j}$ is an
  eigenvector of $\Pi$ of the form $P_0(i,j)u'_{i+j}$.
% Theorem~\ref{thm:diago-finite} shows that all the right
% eigenvectors of $\Pi$ of the form $P_0(i,j)u_{i+j}$ can be recovered this way, except the trivial
% eigenvector 1, whose $(0,0)$ coordinate is nonzero.
  This will also be true when $Z$ has infinite state space.
\end{rem}

\begin{rem}
  \label{rem:continuous-time}
  In the following proof (and also in the case of infinite state space), no specific use is made of the fact
  that the matrix $\Pi$ is stochastic. Therefore, Theorem~\ref{thm:diago-finite} also holds true in the case
  of a \textbf{continuous-time} N2dMC, where the matrix $\Pi_0$ is now the infinitesimal generator of the
  process $Z_t=X_t+Y_t$.
\end{rem}

\begin{proof}
Point~(a) is an easy consequence of Proposition~\ref{prop:stabilite}.
% Note first that, because of Proposition~\ref{prop:Pd}~(e), we can define by convention $u_{i+j}$
% in~(\ref{eq:eigenvector-v}) to be zero if $i+j\leq d-1$.

% Using inductively~(\ref{eq:poly-1}), we have
% \begin{multline*}
%   \sum_{k=0}^n\binom{n}{k}X(X+1)\ldots(X+k-1)Y(Y+1)\ldots(Y+n-k-1)P_d(X+k,Y+n-k)
%   \\ =(X+Y+d)(X+Y+d+1)\ldots(X+Y+d+n-1)P_d(X,Y)
% \end{multline*}
% for all $d\geq 0$ and $n\geq 1$, which can be written as
% \begin{multline*}
%   \sum_{k=0}^n\binom{X+k-1}{k}\binom{Y+n-k-1}{n-k}P_d(X+k,Y+n-k) \\ =\binom{X+Y+d+n-1}{n}P_d(X,Y)
% \end{multline*}
% for $(X,Y)\in\ZZ_+^2$. Similarly, an inductive use of~(\ref{eq:poly-2}) yields
% $$
% \sum_{k=0}^n\binom{X}{k}\binom{Y}{n-k}P_d(X-k,Y-n+k)=\binom{X+Y-d}{n}P_d(X,Y)
% $$
% for all $d\geq 0$, $n\geq 1$ and $(X,Y)\in\ZZ_+^2$, where we recall the convention $\binom{a}{b}=0$ if $a<0$,
% $b<0$ or $b>a$.

% It is then easy to check from these equations and from the transition probabilities~(\ref{eq:neutral-trans})
% that the equation $\Pi_d u=\theta u$ implies $\Pi w=\theta w$ with $w$ of the form~(\ref{eq:eigenvector-v}),
% which ends the proof of~(a).

For all $0\leq d\leq N$, the matrix $\Pi_d$ is conjugate with its Jordan normal form. Let
$\{u^{(d),k}\}_{d\leq k\leq N}$ denote the basis of $\CC^{N-d+1}$ corresponding to this normal form, where
$u^{(d),k}=(u^{(d),k}_d,u^{(d),k}_{d+1}\ldots,u^{(d),k}_N)$. Then, the family of vectors
$$
{\cal F} \df \left\{(P_d(i,j)u^{(d),k}_{i+j})_{(i,j)\in{\cal T}_N}:0\leq d\leq N,\ d\leq k\leq N\right\}
$$
is composed of $(N+1)+N+(N-1)+\ldots+1=(N+1)(N+2)/2=|{\cal T}_N|$ elements. Moreover, one can prove that it is
linearly independent as follows: since $\{u^{(d),k}\}_{d\leq k\leq N}$ is a basis of $\CC^{N-d+1}$, it is
sufficient to check that
\begin{equation}
  \label{eq:pf-thm-diago}
  \sum_{d=0}^N P_d(i,j)v^{(d)}_{i+j}=0,\quad\forall (i,j)\in{\cal T}_N
\end{equation}
implies that $v^{(d)}=0$ for all $0\leq d\leq N$, where $v^{(d)}=(v^{(d)}_d,\ldots,v^{(d)}_N)\in\CC^{N-d+1}$,
seen as a subspace of $\CC^{N+1}$ by putting the $d$ first coordinates to be zero. Given $k\leq N$, the
equality~(\ref{eq:pf-thm-diago}) for $i+j=k$ combined with Proposition~\ref{prop:Pd}~(f) yields
$v_k^{(0)}=\ldots=v_k^{(k)}=0$.

Therefore, ${\cal F}$ is a basis of $\CC^{{\cal T}_N}$ and, by point~(a), the matrix $\Pi$ has a Jordan normal
form in this basis. Point~(b) is then staightforward.

If $\wi{\Pi}_0$ admits a positive reversible measure $\mu$, it is straightforward to check
that the vector $\nu$ in~(\ref{eq:def-nu}) is a reversible measure for $\wi{\Pi}$, and hence the first part
of Point~(c) is true.

In addition, the matrix $\Pi_1$ is reversible w.r.t.\ the measure
$$
\mu^{(1)}_n \df 2\,n^2\,\mu_n, \quad n\in{\cal S}_Z^*,
$$
which implies that $\Pi_1$ admits a basis of right eigenvectors orthonormal w.r.t.\ $\mu^{(1)}$. Similarly,
$\wi{\Pi}_0$ admits a basis of right eigenvectors orthonormal w.r.t.\ $\mu$. By Point~(a), this gives $N+1$
(resp.\ $N$) right eigenvectors of $\Pi$ of the form~(\ref{eq:eigenvector-v}) for $d=0$ (resp.\
$d=1$). Together with the basis of right eigenvectors of $\wi{\Pi}$ obtained above (extended by zero on
$\{0\}\times\NN$ and $\NN\times\{0\}$), this gives a basis of eigenvectors of $\Pi$ and ends the proof of~(c).
\end{proof}

\subsubsection{Example: 3-colors urn model (or 3-types Moran model)}
\label{sec:urn}

The class of transition matrices given in~(\ref{eq:neutral-trans}) can be obtained by
composition and linear combinations of the transition matrices
$\Pi^{(n)+}=(\pi^{(n)+}_{(i,j),(k,l)}))_{(i,j),(k,l)\in{\cal S}}$,
$\Pi^{(n)-}=(\pi^{(n)-}_{(i,j),(k,l)}))_{(i,j),(k,l)\in{\cal S}}$ and
$\Pi^{(n)}=(\pi^{(n)}_{(i,j),(k,l)}))_{(i,j),(k,l)\in{\cal S}}$, where for all $n\geq 1$
\begin{gather*}
  \pi^{(n)+}=
  \begin{cases}
    \frac{i}{i+j}, & \text{if\ }k=i+1,\ l=j,\ i+j=n \\
    \frac{j}{i+j}, & \text{if\ }k=i,\ l=j+1,\ i+j=n \\
    0 & \text{otherwise,}
  \end{cases} \\
  \pi^{(n)-}=
  \begin{cases}
    \frac{i}{i+j}, & \text{if\ }k=i-1,\ l=j,\ i+j=n \\
    \frac{j}{i+j}, & \text{if\ }k=i,\ l=j-1,\ i+j=n \\
    0 & \text{otherwise,}
  \end{cases}
\end{gather*}
and for all $n\geq 0$
$$
\pi^{(n)}=
\begin{cases}
  1, & \text{if\ }k=i,\ l=j,\ i+j=n \\
  0 & \text{otherwise.}
\end{cases}
$$

One easily checks, first that the vector spaces ${\cal V}_d$ for all $d\geq 0$ are stable for
all these matrices, and second that, for the matrix~(\ref{eq:neutral-trans}),
$$
\Pi=\sum_{n<m\in{\cal S}_Z}p_{n,m}(\Pi^{(n)+})^{m-n}
+\sum_{n>m\in{\cal S}_Z}p_{n,m}(\Pi^{(n)-})^{n-m}
\sum_{n\in{\cal S}_Z}p_{n,n}\Pi^{(n)}.
$$
Hence, the vector spaces ${\cal V}_d$ are trivially stable for such matrices.

One may however recover a much larger class of matrices for which ${\cal V}_d$ are stable
vector spaces, by considering the algebra of matrices spanned by the matrices $\Pi^{(n)\pm}$
and $\Pi^{(n)}$. Below, we study in detail such an example. \bigskip

Consider an urn with $N$ balls of three different colors and consider the following process:
one picks a ball at random in the urn, notes its color, puts it back in the urn, picks another
ball in the urn and replaces it by a ball of the same color as the first one. The number of
balls of each colors then forms a Markov chain, which can be viewed as the embedded Markov
chain of the 3-types Moran model, defined as follows: consider a population of $N$
individuals, with 3 different types. For each pair of individuals, at rate 1, the second
individual is replaced by an individual of the same type as the first individual in the pair.

Let $i$ denote the number of balls of the first color, and $j$ the number of balls of the
second color. Then, there are $N-i-j$ balls of the third color. The transition probabilities
of this Markov chain are as follows: given that the current state of the process is $(i,j)$,
the state at the next time step is
\begin{equation*}
  \begin{array}{rll}
    (i+1,j) & \text{with probability} & \frac{i(N-i-j)}{N^2}, \\
    (i-1,j) & \text{with probability} & \frac{i(N-i-j)}{N^2}, \\
    (i,j+1) & \text{with probability} & \frac{j(N-i-j)}{N^2}, \\
    (i,j-1) & \text{with probability} & \frac{j(N-i-j)}{N^2}, \\
    (i+1,j-1) & \text{with probability} & \frac{ij}{N^2}, \\
    (i-1,j+1) & \text{with probability} & \frac{ij}{N^2}, \\
    (i,j) & \text{with probability} & \frac{i^2+j^2+(N-i-j)^2}{N^2}.
  \end{array}
\end{equation*}
These transition probabilities do not not have the form~(\ref{eq:neutral-trans}). However, a
variant of Proposition~\ref{prop:stabilite}, and hence of Theorem~\ref{thm:diago-finite},
apply to this process, because of the following observation: let us construct the matrices
$\Pi^+$, $\Pi^-$, $\widehat{\Pi}^+$, $\widetilde{\Pi}^+$ from the matrices
$\Pi^+_0=(p^+_{n,m})_{n,m\in{\cal S}_Z}$, $\Pi^-_0=(p^-_{n,m})_{n,m\in{\cal S}_Z}$,
$\widehat{\Pi}^+_0=(\widehat{p}^+_{n,m})_{n,m\in{\cal S}_Z}$,
$\widetilde{\Pi}^+_0=(\widetilde{p}^+_{n,m})_{n,m\in{\cal S}_Z}$ respectively, exactly as $\Pi$ was
constructed from $\Pi_0$ in~(\ref{eq:neutral-trans}), where
\begin{equation*}
  \begin{array}{lll}
    p^+_{n,n+1}=\frac{k}{N}, & p^+_{n,n}=1-\frac{k}{N}, & p^+_{n,m}=0 \text{\ otherwise}, \\    
    p^-_{n,n-1}=\frac{k}{N}, & p^+_{n,n}=1-\frac{k}{N}, & p^+_{n,m}=0 \text{\ otherwise}, \\    
    \widehat{p}^+_{n,n+1}=\frac{k(N-k)}{N^2}, & \widehat{p}^+_{n,n}=1-\frac{k(N-k)}{N^2}, &
    \widehat{p}^+_{n,m}=0 \text{\ otherwise}, \\    
    \widetilde{p}^+_{n,n+1}=\frac{k(N-k-1)}{N^2}, & \widetilde{p}^+_{n,n}=1-\frac{k(N-k-1)}{N^2}, &
    \widetilde{p}^+_{n,m}=0 \text{\ otherwise}. \\    
  \end{array}
\end{equation*}
Then the transition matrix of the 3-colors urn model is given by
$$
\Pi=\Pi^+\Pi^-+\widehat{\Pi}^+-\widetilde{\Pi}^-.
$$
In particular, the vector spaces ${\cal V}_d$ for $0\leq d\leq N$ are all stable for this
matrix.

The transition matrix $\Pi$ has absorbing sets $\{(i,0):0\leq i\leq N\}$, $\{(0,i):0\leq i\leq
N\}$ and $\{(i,N-i):0\leq i\leq N\}$, and absorbing states $(0,0)$, $(N,0)$ and $(0,N)$. The
restriction of the matrix $\Pi$ on the set 
$$
{\cal S}^{**}:=\{(i,j):i\geq 1,\ j\geq 1,\ i+j\leq N-1\}
$$
admits the reversible measure
$$
\nu_{(i,j)}=\frac{1}{ij(N-i-j)}.
$$
Hence the matrix $\Pi$ admits a family of right eigenvectors null on the absorbing sets, which
forms an orthonormal basis of $\LL^2({\cal S}^{**},\nu)$.

One easily checks, using~(\ref{eq:utile-Moran-3}), that $v_{(i,j)}=P_d(i,j)u_{i+j}$ is a right
eigenvector of $\Pi$ for the eigenvalue $\theta$ if and only if for all $d\leq k\leq N$
\begin{align}
  \theta' u_k & =(N-k)\left[(k+d)u_{k+1}-2k u_k+(k-d)u_{k-1}\right] \notag \\
  & =(N-k)\left[k(u_{k+1}-2k u_k+u_{k-1})+d(u_{k+1}-u_{k-1})\right], \label{eq:Moran-3}
\end{align}
where
$$
\theta=1+\frac{\theta'-d(d-1)}{N^2}.
$$
Now, the Hahn polynomials $Q_n(x;\alpha,\beta,N)$ introduced in~(\ref{eq:Hahn-poly}) satisfy
(cf.\ e.g.~\cite{karlin-mcgregor-75})
\begin{multline*}
  -n(n+\alpha+\beta+1)Q_d(x)=x(N+\beta-x)Q_n(x-1)
  +(N-1-x)(\alpha+1+x)Q_n(x+1)\\
  -[x(N+\beta-x)+(N-1-x)(\alpha+1+x)]Q_n(x).
\end{multline*}
Hence, for all $0\leq n\leq N-d$,~(\ref{eq:Moran-3}) admits the polynomial (in $k$) solution
of degree $n$
$$
u_k=Q_n(k-d;2d-1,-1,N-d+1).
$$
If $n\geq 1$, this polynomial must be divisible by $(N-k)$, so we can define the polynomial
$R_n^{(N,d)}(X)$ of degree $n-1$ as
$$
(N-X)R^{(N,k)}_n(X)=Q_n(X-d;2d-1,-1,N-d+1).
$$
% Assuming that ~(\ref{eq:Moran-3}) admits a polynomial solution $u_k$ of degree $n$ and
% comparing the higher degree terms, one must have
% $$
% \theta'=-(n(n-1)+2nd),
% $$
% and if $n\geq 1$ the polynomial must be divisible by $(N-k)$. Due to the discrete derivatives
% involved in~(\ref{eq:Moran-3}), finding a polynomial solution of~(\ref{eq:Moran-3}) (with
% $\theta'=-n(n-1)-2nd$) of degree $n$ and of higher degree coefficient $1$ boils down to
% solving a triangular linear system with non-zero diagonal elements. Hence for all $n\geq 1$,
% there exists a unique polynomial solution of~(\ref{eq:Moran-3}) of the form
% $(N-X)R_n^{(N,d)}(X)$, where $R_n^{(N,d)}$ has degree $n-1$ and higher degree coefficient $1$.
% In the case $n=0$, there is also a one-dimensional vector space of polynomial solutions
% of~(\ref{eq:Moran-3}) of degree 0, spanned by the constant $1$. One can check the following
% formulas for the first polynomials $R_n^{(N,d)}$:
% \begin{align*}
%   R_1^{(N,d)}(X) & =1, \\
%   R_2^{(N,d)}(X) & =X-\frac{dN}{1+d}, \\
%   R_3^{(N,d)}(X) & =X^2-\frac{(1+2d)N}{2+d}\,X+\frac{d(2+d+(1+2d)N^2)}{(3+2d)(2+d)}.
% \end{align*}
Obviously, the family of vectors $(1,\ldots,1)$ and
$((N-d)R_n^{(N,d)}(d),\ldots,R_n^{(N,d)}(N-1),0)$ for $1\leq n\leq N-d$ is linearly
independent and hence forms a basis of the vector space $\RR^{N-d+1}$ of real vectors indexed
by $d,d+1,\ldots,N$. In addition,~(\ref{eq:Moran-3}) cannot admit any other linearly
independent solution and hence, necessarily, $R_n^{(N,d)}(k)=0$ for all $n>N-d$ and $d\leq
k\leq N-1$.

We have obtained a basis of right eigenvectors of $\Pi$ of the form
$$
\left\{P_d(i,j)\right\}_{0\leq d\leq
  N}\bigcup\left\{P_d(i,j)(N-i-j)R_n^{(N,d)}(i+j)\right\}_{0\leq d\leq N-1,\ 1\leq n\leq N-d},
$$
and the eigenvalue corresponding to the eigenvector $P_d(i,j)$ if $n=0$, or
$P_d(i,j)(N-i-j)R_n^{(N,d)}(i+j)$ if $n\geq 1$, is
$$
\theta_{d,n}:=1-\frac{n(n-1)+2nd-d(d-1)}{N^2}=1-\frac{(d+n)(d+n-1)}{N^2}.
$$
Similarly as in the proof of Proposition~\ref{prop:explicit-Pd}, this family of eigenvectors
can be seen as a singular limit case of those obtained in~\cite{karlin-mcgregor-75} for the
multitype Moran model with mutation or immigration.

Note that in the case of the 2-colors urn model, one can easily check that a basis of right
eigenvectors of the corresponding transition matrix is given by $(1,\ldots,1)$ and
$(NR_n^{(N,0)}(0),\ldots,R_n^{(N,0)}(N-1),0)$ for $1\leq n\leq N$. Hence the spectrum is the
same in the 2- and 3-colors urn models, although the multiplicity of each eigenvalue is
different. In the case of two colors, the eigenvalues have the form $1-k(k-1)/N^2$ for $0\leq
k\leq N$, each with multiplicity 1 (except for the eigenvalue 1, with multiplicity 2). In the
case of three colors, the eigenvalue $1-k(k-1)/N^2$ has multiplicity $k+1$ (except for the
eigenvalue 1, which has multiplicity 3).

Concerning the eigenvectors in $\LL^2({\cal S}^{**},\nu)$. they are given by
$$
\left\{ij(N-i-j)Q_d(i,j)R_n^{(N,d)}(i+j)\right\}_{2\leq d\leq N-1,\ 1\leq n\leq N-d},
$$
and for all $3\leq k\leq N$, the eigenspace for the eigenvalue $1-k(k-1)/N^2$ is
\begin{multline*}
  V_k:=\text{Vect}\{ij(N-i-j)Q_2(i,j)R_{k-2}^{(N,2)}(i+j),\ldots, \\
  ij(N-i-j)Q_{k-1}(i,j)R_1^{(N,k-1)}(i+j)\}.
\end{multline*}

We shall end the study of this example by giving an apparently non-trivial relation between
the polynomials $P_d$ and $R_n^{(N,d)}$. Because of the symmetry of the colors, we have
\begin{align*}
  V_k & =\text{Vect}\{ij(N-i-j)Q_2(i,N-i-j)R_{k-2}^{(N,2)}(N-j),\ldots, \\ & \qquad\qquad\qquad\qquad\qquad
  ij(N-i-j)Q_{k-1}(i,N-i-j)R_1^{(N,k-1)}(N-j)\} \\
  & =\text{Vect}\{ij(N-i-j)Q_2(N-i-j,j)R_{k-2}^{(N,2)}(N-i),\ldots, \\ & \qquad\qquad\qquad\qquad\qquad
  ij(N-i-j)Q_{k-1}(N-i-j,j)R_1^{(N,k-1)}(N-i)\},
\end{align*}
and hence
\begin{align*}
  &
  \text{Vect}\{Q_2(i,j)R_{n-2}^{(N,2)}(i+j),\ldots,Q_{n-1}(i,j)R_1^{(N,n-1)}(i+j)\}_{(i,j)\in{\cal
      S}^{**}} \\ & =
  \text{Vect}\{Q_2(i,N-i-j)R_{n-2}^{(N,2)}(N-j),\ldots,Q_{n-1}(i,N-i-j)R_1^{(N,n-1)}(N-j)\}_{(i,j)\in{\cal
      S}^{**}} \\ & =
  \text{Vect}\{Q_2(N-i-j,j)R_{n-2}^{(N,2)}(N-i),\ldots,Q_{n-1}(N-i-j,j)R_1^{(N,n-1)}(N-i)\}_{(i,j)\in{\cal
      S}^{**}}.
\end{align*}

\subsection{The case of infinite state space}
\label{sec:infinite}

The goal of this subsection is to extend Theorem~\ref{thm:diago-finite} to the case where $Z$ has infinite
state space and the matrix $\Pi_0$ is a compact operator and admits a reversible measure. To this aim, we need first some
approximation properties of $\Pi$ by finite rank operators.

\subsubsection{Approximation properties}
\label{sec:approx}

Recall that $\Pi_0$ is a Markov kernel on $\ZZ_+$ absorbed at 0 and that $\wi \Pi_0$ denotes its restriction
to $\NN$ (equivalently, $\wi \Pi_0$ is the sub-Markovian kernel on $\NN$ corresponding to $\Pi_0$ with a
Dirichlet condition at 0).  We assume that $\wi \Pi_0$ is reversible with respect to a positive measure $\mu$
on $\NN$ (not necessarily finite).  For any $N\in\NN$, consider the sub-Markovian kernel $\wi\Pi_0^{(N)}$ on
$\NN$ defined by
$$
\forall x,y\in\NN,\qquad \wi \Pi_0^{(N)}(x,y)\df
\begin{cases}
\Pi_0(x,y)&\hbox{if $x,y\leq N$}\\
0&\hbox{otherwise.}\end{cases}
$$
In other words, $\wi{\Pi}^{(N)}_0=\text{Pr}_N\wi\Pi_0\text{Pr}_N$, where $\text{Pr}_N$ is the projection operator
defined by $\text{Pr}_N(u_1,u_2,\ldots)=(u_1,\ldots,u_N,0,\ldots)$.
 
The kernel $\wi\Pi_0^{(N)}$ is not Markovian (since $\wi\Pi_0^{(N)}(x,\NN)=0$ for $x>N$),
but it can be seen as the restriction to $\NN$ of a unique Markovian kernel $\Pi_0^{(N)}$
on $\ZZ_+$ absorbed at 0. With this interpretation, we can construct the matrix $\wi\Pi^{(N)}$ from
$\Pi_0^{(N)}$ exactly as the matrix $\wi\Pi$ was constructed from $\Pi_0$ in the beginning of
Section~\ref{sec:diago}.

Of course $\wi\Pi_0^{(N)}$ remains reversible with respect to $\mu$, and thus, like $\wi\Pi_0$, it can be
extended into a self-adjoint operator on $\LL^2(\NN,\mu)$.
We denote by $\vvv\cdot\vvv_0$ the natural operator norm on the set of bounded operators on $\LL^2(\NN,\mu)$,
namely if $K$ is such an operator,
$$
\vvv K\vvv_0 \df \sup_{u\in\LL^2(\NN,\mu)\setminus\{0\}}\frac{\lVe K u\rVe_{\mu}}{\lVe u\rVe_{\mu}}.
$$
If furthermore $K$ is self-adjoint, we have, via spectral calculus,
$$
\vvv K\vvv_0 \df \sup_{u\in\LL^2(\NN,\mu)\setminus\{0\}}\frac{\lve\langle u,Ku\rangle_\mu\rve}{\|u\|^2_\mu}.
$$
The next result gives a simple compactness criterion for $\wi\Pi_0$.

\begin{lem}\label{PiPi}
The operator $\wi\Pi_0$ acting on $\LL^2(\NN,\mu)$
is compact if and only if
$$
\lim_{N\ri \iy}\vvv \wi\Pi_0-\wi\Pi_0^{(N)}\vvv_0 = 0.
$$
\end{lem}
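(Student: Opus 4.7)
The plan is a direct argument exploiting that each $\wi\Pi_0^{(N)}$ is a finite-rank operator (its kernel is supported in $\{1,\ldots,N\}^2$) and that $\text{Pr}_N$ converges strongly to the identity $I$ on $\LL^2(\NN,\mu)$ as $N\to\infty$.

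For the ``if'' direction, I would simply observe that $\wi\Pi_0^{(N)}$ has finite-dimensional range, hence is compact for each $N$, and that the set of compact operators is closed in the operator norm topology on $\LL^2(\NN,\mu)$. Therefore if $\vvv \wi\Pi_0-\wi\Pi_0^{(N)}\vvv_0\to 0$, then $\wi\Pi_0$ is a norm limit of compact operators and is itself compact.

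For the ``only if'' direction, I would use the classical fact that when $K$ is a compact operator on a Hilbert space and $(P_N)_{N\geq 1}$ is a uniformly bounded sequence of operators converging strongly to $I$, then $KP_N\to K$ and $P_N K\to K$ in operator norm. I would prove this by contradiction: if $\vvv K-KP_N\vvv_0\not\to 0$, there exist $\varepsilon>0$, a subsequence $N_k$, and unit vectors $u_k\in\LL^2(\NN,\mu)$ with $\lVe K(I-P_{N_k})u_k\rVe_\mu\geq \varepsilon$. The sequence $(I-P_{N_k})u_k$ is bounded, so by compactness of $K$ we can extract a subsequence along which $K(I-P_{N_k})u_k$ converges to some $v\in\LL^2(\NN,\mu)$ with $\|v\|_\mu\geq\varepsilon$. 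Taking inner products against a fixed $w\in\LL^2(\NN,\mu)$ and using that $(I-P_{N_k})K^*w\to 0$ strongly by self-adjointness of $K$ (or just by applying the same trick to $K^*$) yields $\langle v,w\rangle_\mu=0$ for all $w$, a contradiction. An analogous argument handles $P_N K$. Applied to $K=\wi\Pi_0$ and $P_N=\text{Pr}_N$, this yields
$$
\vvv \wi\Pi_0-\wi\Pi_0^{(N)}\vvv_0
=\vvv \wi\Pi_0-\text{Pr}_N\wi\Pi_0\text{Pr}_N\vvv_0
\leq \vvv \wi\Pi_0-\wi\Pi_0\text{Pr}_N\vvv_0+\vvv \wi\Pi_0-\text{Pr}_N\wi\Pi_0\vvv_0\xrightarrow[N\to\infty]{}0.
$$

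The main obstacle is really just the compactness-plus-strong-convergence lemma above; everything else is bookkeeping. Since $\text{Pr}_N$ is an orthogonal projection (norm $1$) and converges strongly to $I$ on $\LL^2(\NN,\mu)$ (which is immediate because finitely supported vectors are dense), and since $\wi\Pi_0$ is self-adjoint with respect to $\mu$, the hypotheses of that lemma are met and the conclusion follows.
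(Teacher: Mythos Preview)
Your proof is correct and follows essentially the same approach as the paper's: the ``if'' direction is identical, and for the ``only if'' direction both arguments split $\wi\Pi_0-\wi\Pi_0^{(N)}$ via the triangle inequality and use that compact operators upgrade strong convergence of $\text{Pr}_N$ to norm convergence. The paper handles $\text{Pr}_N\wi\Pi_0\to\wi\Pi_0$ via a finite $\varepsilon$-net covering of $\wi\Pi_0(B)$ and $\wi\Pi_0\text{Pr}_N\to\wi\Pi_0$ via the weak-to-strong property of compact operators, whereas you package both as instances of a single sequential lemma; the underlying ideas are the same.
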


\begin{proof}
Since for any $N\in\ZZ_+$, $\wi\Pi_0^{(N)}$ has finite range, if the above convergence holds, $\wi\Pi_0$ can
be strongly approximated by finite range operators and it is well-known that this implies that $\wi\Pi_0$ is
compact.

The converse implication can be proved adapting a standard argument for compact operators: assume that
$\wi\Pi_0$ is compact and let $\varepsilon >0$ be fixed. Then $\wi{\Pi}_0(B)$ is compact, where $B$ is the closed
unit ball of $\LL^2(\NN,\mu)$. Hence
$$
\wi{\Pi}_0(B)=\bigcup_{i=1}^n B(\psi^{(i)},\varepsilon),
$$
for some $n<+\infty$ and $\psi^{(1)},\ldots,\psi^{(n)}\in\wi{\Pi}_0(B)$, where $B(\psi,\varepsilon)$ is the
closed ball centered at $\psi$ with radius $\varepsilon$. For any $i\leq n$, since
$\psi^{(i)}\in\LL^2(\NN,\mu)$, there exists $N_i$ such that $\sum_{k\geq
  N_i}(\psi^{(i)}_k)^2\mu_k\leq\varepsilon$, and thus
$$
\wi{\Pi}_0(B)=\bigcup_{i=1}^n B(\text{Pr}_{N_i}\psi^{(i)},2\varepsilon).
$$
In other words, for all $\varphi\in B$, there exists $i\leq n$ such that
$\|\wi{\Pi}_0\varphi-\text{Pr}_{N_i}\psi^{(i)}\|_{\mu}\leq 2\varepsilon$. This implies that
$$
\|\wi{\Pi}_0\varphi-\text{Pr}_N\wi{\Pi}_0\varphi\|_\mu\leq 2\varepsilon,
$$
where $N=\sup\{N_1,\ldots,N_n\}$, i.e.\ $\vvv \wi\Pi_0-\text{Pr}_N\wi\Pi_0^{(N)}\vvv_0\leq
2\varepsilon$. Since $\wi\Pi_0^{(N)}=\text{Pr}_N\wi\Pi_0\text{Pr}$, we obtain that 
$$
\lim_{N\rightarrow+\infty}\vvv
\wi\Pi_0\text{Pr}_N-\wi\Pi_0^{(N)}\vvv_0= 0.
$$

In order to complete the proof, it only remains to check that 
$$
\lim_{N\rightarrow+\infty}\vvv \wi\Pi_0-\wi\Pi_0\text{Pr}_N\vvv_0
=0.
$$
If this was false, one could find a sequence $(\varphi^{(N)})_{N\geq 1}$ in $B$ such that $\varphi^{(N)}_k=0$
for all $k\leq N$ and $\|\wi\Pi_0\varphi^{(N)}\|_\mu$ would not converge to 0. Such a sequence
$(\varphi^{(N)})_{N\geq 1}$ weakly converges to 0. Now, another usual characterization of compact operators is
the fact that the image of weakly converging subsequences strongly converges to the image of the limit. In
other words, $\|\wi\Pi_0\varphi^{(N)}\|_\mu\rightarrow 0$. This contradiction ends the proof of
Lemma~\ref{PiPi}.
\end{proof}

The interest of $\wi \Pi_0^{(N)}$ is that it brings us back to the finite situation.
Let $\wit \Pi_0^{(N)}$ be the restriction of $\wi \Pi_0^{(N)}$ to $\lin 1, N\rin$,
which can be seen as a $N\times N$ matrix.
We have for instance that the spectrum of $\wi \Pi_0^{(N)}$ is the spectrum of $\wit \Pi_0^{(N)}$
plus the eigenvalue 0.

We are now going to see how the results of Section~\ref{sec:finite} are affected by the change from $\Pi_0$ to
$\Pi_0^{(N)}$.  More generally, we consider two Markov kernels $\Pi_0$ and $\Pi_0'$ on $\ZZ_+$ absorbed at
0, whose restrictions to $\NN$, $\wi \Pi_0$ and $\wi \Pi_0'$, are both reversible with respect to
$\mu$.  We associate to them $\Pi$ and $\Pi'$ defined on $\ZZ_+^2$ as in (\ref{eq:neutral-trans}), and their
respective restriction to $\NN^2$, $\wi\Pi$ and $\wi \Pi'$. We also define the matrices $\Pi_d$ and $\Pi_d'$
for $d\geq 1$, as in (\ref{eq:def-M_d}). Note that $\wi\Pi$ and $\wi\Pi'$ are reversible with respect to
$\nu$, defined in (\ref{eq:def-nu}) and it is straightforward to check that, for any $d\geq 1$, $\Pi_d$ and
$\Pi_d'$ are both reversible w.r.t.\ $\mu^{(d)}=(\mu^{(d)}_n)_{n\in{\cal S}_Z,\:n\geq d}$ defined by
\begin{equation}
  \label{eq:def-mu-d}
  \mu_n^{(d)} \df 2\,n\,\binom{n+d-1}{2d-1}\,\mu_n,\quad n\in\NN,\quad n\geq d.  
\end{equation}
We will denote $\vvv\cdot\vvv$ and $\vvv\cdot\vvv_d$ the operator norms in $\LL^2(\NN^2,\nu)$ and
$\LL^2(\NN_d,\mu^{(d)})$, where $\NN_d\df\{d,d+1,\ldots\}$. The next result shows that, if one takes
$\Pi_0'=\Pi_0^{(N)}$, the approximation of $\wi\Pi_0$ by $\wi\Pi_0^{(N)}$ behaves nicely.
\begin{prop}
\label{compnorm}
We always have
$$
\vvv \wi\Pi-\wi\Pi'\vvv=
\sup_{d\geq 2}\vvv \Pi_d-\Pi_d'\vvv_d
$$
Furthermore, if $\wi\Pi_0-\wi\Pi'_0\geq0$ (in the sense that all the entries of this infinite matrix are
non-negative), then
\begin{equation}
  \label{vvvd0-1}
  \forall d\geq 1,\quad \vvv \Pi_{d+1}-\Pi'_{d+1}\vvv_{d+1}\leq \vvv \Pi_{d}-\Pi'_{d}\vvv_{d}
\end{equation}
and
\begin{equation}
  \label{vvvd0-2}
  \vvv\Pi_1-\Pi'_1\vvv_1=\vvv\wi\Pi_0-\wi\Pi'_0\vvv_0.
\end{equation}
In particular,
$$
\vvv \wi\Pi-\wi\Pi'\vvv=\vvv\Pi_2-\Pi'_2\vvv_2\leq\vvv \wi\Pi_0-\wi\Pi'_0\vvv_0.
$$
\end{prop}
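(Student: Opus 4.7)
The plan is to decompose $\LL^2(\NN^2,\nu)$ into $\wi\Pi$-invariant subspaces and reduce the two-dimensional problem to a family of one-dimensional ones. Using the orthogonality relation in Proposition~\ref{prop:Pd}(h), together with the definitions (\ref{eq:def-nu}) of $\nu$ and (\ref{eq:def-mu-d}) of $\mu^{(d)}$, I would first check that for every $d\geq 2$ the map $\iota_d:u\mapsto(P_d(i,j)u_{i+j})_{(i,j)\in\NN^2}$ is an isometry from $\LL^2(\NN_d,\mu^{(d)})$ onto $\cV_d\cap\LL^2(\NN^2,\nu)$, and that its images for different $d$ are pairwise orthogonal. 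To see that they span $\LL^2(\NN^2,\nu)$, slice along $i+j=k$: Proposition~\ref{prop:Pd}(c) ensures $P_d(\cdot,k-\cdot)$ vanishes at $0$ and $k$ for $d\geq 2$, while Proposition~\ref{prop:Pd}(f) (combined with the values $P_0\equiv 1$, $P_1^{(1)}(0,k)=0$, $P_1^{(1)}(k,0)=k$) forces $\{P_d(\cdot,k-\cdot)\}_{d=2}^{k}$ to be a basis of the $(k-1)$-dimensional space of functions on $\{1,\dots,k-1\}$. By Proposition~\ref{prop:stabilite}, $\wi\Pi$ and $\wi\Pi'$ preserve each $\cV_d$, with $\iota_d^{-1}\wi\Pi\iota_d=\Pi_d$ (and analogously for $\Pi'$). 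Thus $\wi\Pi-\wi\Pi'$ is unitarily equivalent to the block-diagonal operator $\bigoplus_{d\geq 2}(\Pi_d-\Pi'_d)$, which gives the first identity.

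The equality (\ref{vvvd0-2}) actually needs no positivity hypothesis: inspecting (\ref{eq:def-M_d}) for $d=1$ gives $p^{(1)}_{n,m}=(m/n)p_{n,m}$ for every $n,m\geq 1$, so $\Pi_1=D^{-1}\wi\Pi_0 D$ with $D=\mathrm{diag}(n)$. Since $\mu^{(1)}_n=2n^2\mu_n$, the map $u\mapsto Du$ is, up to the harmless factor $\sqrt{2}$, an isometry from $\LL^2(\NN,\mu^{(1)})$ onto $\LL^2(\NN,\mu)$ that intertwines $\Pi_1$ with $\wi\Pi_0$; applying it to the difference $\Pi_1-\Pi'_1$ yields the claimed identity.

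The hard part is the monotonicity (\ref{vvvd0-1}). Set $A_d\df\Pi_d-\Pi'_d$. Under the hypothesis each $p^{(d)}_{n,m}$ is a positive multiple of $p_{n,m}$, hence $A_d$ has non-negative entries and is self-adjoint on $\LL^2(\NN_d,\mu^{(d)})$; accordingly its operator norm is attained on non-negative test vectors: $\vvv A_d\vvv_d=\sup_{u\geq 0,\,\|u\|_{\mu^{(d)}}=1}\langle u,A_d u\rangle_{\mu^{(d)}}$. A direct computation from (\ref{eq:def-M_d}) gives $(A_{d+1})_{n,m}=c_{n,m}^{(d)}(A_d)_{n,m}$, where $c_{n,m}^{(d)}=(m+d)/(n+d)$ for $m>n$, $(m-d)/(n-d)$ for $m<n$, and $1$ for $m=n$; moreover $\mu^{(d+1)}_n/\mu^{(d)}_n=(n^2-d^2)/[2d(2d+1)]$ for $n\geq d+1$. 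The heart of the argument is the pointwise inequality
\begin{equation*}
c_{n,m}^{(d)}\,\frac{\mu^{(d+1)}_n}{\mu^{(d)}_n}\leq\alpha_n\alpha_m,\qquad\alpha_n\df\sqrt{\mu^{(d+1)}_n/\mu^{(d)}_n},
\end{equation*}
which for $m>n$ reduces, after squaring, to $(m+d)(n-d)\leq(m-d)(n+d)$, i.e.\ $2d(n-m)\leq 0$; the case $m<n$ is symmetric and $m=n$ is equality. Given a non-negative unit $u\in\LL^2(\NN_{d+1},\mu^{(d+1)})$, define $\tilde u_n\df\alpha_n u_n$ on $\NN_d$ with $\tilde u_d=0$; then $\|\tilde u\|^2_{\mu^{(d)}}=\sum_{n\geq d+1}\alpha_n^2 u_n^2\mu^{(d)}_n=\|u\|^2_{\mu^{(d+1)}}=1$, and combining the pointwise bound with $u_nu_m\geq 0$ and $(A_d)_{n,m}\geq 0$ gives termwise
\begin{equation*}
\alpha_n\alpha_m u_nu_m(A_d)_{n,m}\mu^{(d)}_n\geq u_nu_m(A_{d+1})_{n,m}\mu^{(d+1)}_n,
\end{equation*}
hence $\langle\tilde u,A_d\tilde u\rangle_{\mu^{(d)}}\geq\langle u,A_{d+1}u\rangle_{\mu^{(d+1)}}$. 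Taking the supremum over $u$ yields (\ref{vvvd0-1}). Finally, combining the first identity with the monotonicity and with (\ref{vvvd0-2}) shows that the supremum over $d\geq 2$ is attained at $d=2$ and is bounded by $\vvv\wi\Pi_0-\wi\Pi'_0\vvv_0$, which is the last displayed inequality.
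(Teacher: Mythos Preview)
Your proof is correct and follows essentially the same route as the paper's: the orthogonal decomposition $\LL^2(\NN^2,\nu)=\bigoplus_{d\geq2}\iota_d\bigl(\LL^2(\NN_d,\mu^{(d)})\bigr)$ via Proposition~\ref{prop:Pd}(h) and~(f), the intertwining $\iota_d^{-1}\wi\Pi\,\iota_d=\Pi_d$ from Proposition~\ref{prop:stabilite}, and then the monotonicity via the pointwise bound $c_{n,m}^{(d)}\alpha_n/\alpha_m\leq 1$ (your $c^{(d)}_{n,m}$ and $\alpha_n^2$ are exactly the paper's $h^{(d)}_{i,j}$ and $g^{(d)}_n$, and your $\tilde u$ is the paper's $\wi u$). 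Your observation that the conjugation $\Pi_1=D^{-1}\wi\Pi_0 D$ already gives~(\ref{vvvd0-2}) without any sign assumption is a small sharpening of the presentation; the paper derives it under the positivity hypothesis via the same $g^{(0)},h^{(0)}$ machinery, where in fact $\sqrt{g^{(0)}_i/g^{(0)}_j}\,h^{(0)}_{i,j}\equiv 1$, so the conclusion is identical.
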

% One would have noticed that for any $N\in\NN$, $\wi\Pi_0-\wi\Pi^{(N)}_0\geq0$.

\begin{proof}
For $d\geq 2$, denote by ${\cal V}'_d$ the set of $v\in\LL^2(\NN^2,\nu)$ of the form
$$
\forall i,j\in\NN,\qquad v_{i,j}=P_d(i,j)u_{i+j}
$$
with $u\in \LL^2(\NN_d,\mu^{(d)})$. We denote by $v(d,u)$ the sequence $v$ defined by the above r.h.s.  The
definitions of $\nu$, $\mu^{(d)}$ and Proposition~\ref{prop:Pd}~(h) enable us to see that the mapping
$$
\LL^2(\NN_d,\mu^{(d)})\ni u \ \mapsto \ v(d,u)\in \LL^2(\NN^2,\nu)
$$
is an isometry.

Proposition~\ref{prop:Pd}~(h) also shows that ${\cal V}'_d$ and ${\cal V}'_{d'}$ are orthogonal subspaces of
$\LL^2(\NN^2,\nu)$ for all $d,d'\geq 2$, $d\not=d'$. We actually have
\begin{equation}
  \label{eq:somme-directe}
  \LL^2(\NN^2,\nu)=\bigoplus_{d\geq2}{\cal V}'_d.  
\end{equation}
Indeed, let $v\in\LL^2(\NN^2,\nu)$ be orthogonal to ${\cal V}'_d$ for all $d\geq 2$. For all $d\geq 2$, we
define the vector
$$
v^{(d)}_l=\frac{1}{\binom{l+d-1}{2d-1}}\sum_{i=1}^{l-1}v_{i,l-i}\frac{P_d(i,l-i)}{i(l-i)}, \quad l\geq d.
$$
The Cauchy-Schwartz inequality and Proposition~\ref{prop:Pd}~(h) imply that $v^{(d)}\in\LL^2(\NN_d,\mu^{(d)})$ and
since $v$ is orthogonal to ${\cal V}'_d$, by the definition of ${\cal V}'_d$, the vector $v^{(d)}$ is orthogonal
to $\LL^2(\NN_d,\mu^{(d)})$, i.e.\ $v^{(d)}=0$. Fixing $l\geq 2$ and applying Proposition~\ref{prop:Pd}~(f),
one deduces from the equations $v^{(d)}_l=0$ for $2\leq d\leq l$ that $v_{i,l-i}=0$ for $1\leq i\leq l-1$, and
thus $v=0$, ending the proof of~(\ref{eq:somme-directe}).

Now, Proposition~\ref{prop:stabilite} show that ${\cal V}'_d$ is stable by $\wi \Pi$ and $\wi
\Pi'$ and, more precisely,
$$
\forall u\in \LL^2(\NN_d,\mu^{(d)}),\quad \wi\Pi[v(d,u)]=v(d,\Pi_d u)\quad\hbox{and}\quad
\wi\Pi'[v(d,u)]=v(d,\Pi'_d u).
$$
It then follows from~(\ref{eq:somme-directe}) that
$$
\vvv \wi\Pi-\wi\Pi'\vvv= \sup_{d\geq 2}\vvv \Pi_d-\Pi_d'\vvv_d.
$$

Let us now come to the proof of~(\ref{vvvd0-1}). Let $d\geq1$ be fixed and define
$$
g^{(d)} = \frac{d\mu^{(d+1)}}{d\mu^{(d)}}\qquad \hbox{and}\qquad h^{(d)} =
\frac{d(\Pi_{d+1}-\Pi_{d+1}')}{d(\Pi_d-\Pi'_d)},
$$
i.e.
$$
\forall n\geq d+1,\qquad g^{(d)}_n \df \frac{(n-d)(n+d)}{2d(2d+1)}
$$
and
$$
\forall i,j\geq d+1,\qquad h^{(d)}_{i,j} \df
\begin{cases}
  \displaystyle{\frac{j+d}{i+d} }& \text{if\ }j>i, \\
  \displaystyle{\frac{j-d}{i-d}} & \text{if\ }j<i, \\
  1 & \text{if\ }i=j.
\end{cases}
$$
For any $u\in \LL^2(\NN_{d+1},\mu^{(d+1)})$, we get
\begin{align*}
  \lve\frac{\langle u,(\Pi_{d+1}-\Pi_{d+1}')u\rangle_{\mu^{(d+1)}}}{\|u\|^2_{\mu^{(d+1)}}}\rve & \leq
  \frac{\sum_{i,j\in\NN}\mu_i^{(d)}\,(\Pi_d-\Pi'_d)_{i,j}\,g^{(d)}_i\,h^{(d)}_{i,j}\,|u_i|\,|u_j|}
  {\sum_{i\in\NN}\mu^{(d)}_i\,g^{(d)}_i\,u^2_i}\\
  &= \frac{\langle \wi u,(\Pi_d-\Pi_d')\wi u\rangle_{\mu^{(d)}}}{\|u\|^2_{\mu^{(d)}}}\,
  \sup_{i,j\in\NN_{d+1}}\sqrt{\frac{g^{(d)}_i}{g^{(d)}_j}}\,h^{(d)}_{i,j},
\end{align*}
where
$$
\forall i\geq d,\qquad \wi u_i \df \sqrt{g^{(d)}_i}\,|u_i(x)|,
$$
and $\wi u_i=0$ if $i=d$. It is clear that $\wi u\in\LL^2(\NN_d,\mu^{(d)})$, so, taking the supremum over
$u\in \LL^2(\NN_{d+1},\mu^{(d+1)})$, we have
$$
\vvv \Pi_{d+1}-\Pi_{d+1}'\vvv_{d+1}
\leq\sup_{i,j\in\NN_{d+1}}\sqrt{\frac{g^{(d)}_i}{g^{(d)}_j}}\,h^{(d)}_{i,j}\,\vvv \Pi_d-\Pi'_d\vvv_d.
$$
Hence, it only remains to show that 
$$
\sup_{i,j\in\NN_{d+1}}\sqrt{\frac{g^{(d)}_i}{g^{(d)}_j}}\,h^{(d)}_{i,j}\leq 1.
$$
Since $\sqrt{\frac{g^{(d)}_i}{g^{(d)}_i}}h^{(d)}_{i,i}=1$ for all $i\geq 1$, it is sufficient to prove that
$$
\forall j>i\geq d+1,\qquad \sqrt{\frac{g^{(d)}_i}{g^{(d)}_j}}\,h^{(d)}_{i,j} \leq 1
$$
since the l.h.s.\ is symmetrical in $i,j$.  For $j>i\geq d+1$, we compute
$$
\sqrt{\frac{g^{(d)}_i}{g^{(d)}_j}}\,h^{(d)}_{i,j} = \sqrt{\frac{(i-d)(j+d)}{(j-d)(i+d)}}\leq 1,
$$
ending the proof of~(\ref{vvvd0-1}). A similar computation using the fact that
$$
g^{(0)} \df \frac{d\mu^{(1)}}{d\mu}\qquad \hbox{and}\qquad h^{(0)} \df
\frac{d(\Pi_{1}-\Pi_{1}')}{d(\wi\Pi_0-\wi\Pi'_0)}
$$
are given by
$$
\forall i,j\geq 1,\qquad g^{(0)}_i=2i^2\qquad\text{and}\qquad h^{(0)}_{i,j}=\frac{j}{i}
$$
leads to~(\ref{vvvd0-2}).
\end{proof}

Again, let $\wit \Pi^{(N)}$ be the restriction of $\wi \Pi^{(N)}$ to $\cT_N^*$, which can thus be seen as a
finite $\cT_N^*\times \cT_N^*$ matrix.  Similarly to the remark after the proof of Lemma~\ref{PiPi}, the
spectrum of $\wi \Pi^{(N)}$ is the spectrum of $\wit \Pi^{(N)}$ plus the eigenvalue 0.
% (whose eigenspace contains all functions from $\LL^2(\nu)$ which vanish on $\NN\times\NN\setminus\cT_N^*$).

\subsubsection{Spectral decomposition of infinite, compact, reversible N2dMC}
\label{sec:applic}

The following result is an immediate consequence of Lemma~\ref{PiPi} and Proposition~\ref{compnorm}.
\begin{cor}
  If $\wi \Pi_0$ is compact and reversible, the same is true for $\wi \Pi$.
\end{cor}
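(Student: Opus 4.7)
The plan is to combine Lemma~\ref{PiPi} with Proposition~\ref{compnorm}, taking $\Pi'_0=\Pi_0^{(N)}$, and to argue that $\wi\Pi^{(N)}$ is a finite-rank operator, so that $\wi\Pi$ is a norm limit of finite-rank operators, hence compact. Reversibility of $\wi\Pi$ with respect to the measure $\nu$ of~(\ref{eq:def-nu}) was already observed at the start of Section~\ref{sec:approx} (it is the content of Theorem~\ref{thm:diago-finite}(c), whose proof carries over verbatim to the infinite state-space situation once $\wi\Pi_0$ is assumed reversible with respect to $\mu$). So the real work is compactness.

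First, I would note that the matrix $\wi\Pi_0-\wi\Pi_0^{(N)}$ has non-negative entries: by construction, $\wi\Pi_0^{(N)}(x,y)=\wi\Pi_0(x,y)$ for $x,y\le N$, and $\wi\Pi_0^{(N)}(x,y)=0$ otherwise, while $\wi\Pi_0(x,y)\ge 0$ everywhere. Hence Proposition~\ref{compnorm} applies with $\Pi'_0=\Pi_0^{(N)}$ and yields
$$
\vvv \wi\Pi-\wi\Pi^{(N)}\vvv\ \le\ \vvv\wi\Pi_0-\wi\Pi_0^{(N)}\vvv_0.
$$
By Lemma~\ref{PiPi} applied to the compact operator $\wi\Pi_0$, the right-hand side tends to $0$ as $N\to\infty$, so $\vvv \wi\Pi-\wi\Pi^{(N)}\vvv\to 0$.

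Next I would check that $\wi\Pi^{(N)}$ has finite rank on $\LL^2(\NN^2,\nu)$. Indeed $\Pi_0^{(N)}$ is absorbed at $0$ and, from any state $k>N$, jumps directly to $0$ (since $\wi\Pi_0^{(N)}(k,\NN)=0$). Consequently, when one builds the two-dimensional chain from $\Pi_0^{(N)}$ via~(\ref{eq:neutral-trans}), every state $(i,j)\in\NN^2$ with $i+j>N$ is sent out of $\NN^2$ in one step, while states in $\cT_N^*$ only transition within $\cT_N$. It follows that $\wi\Pi^{(N)}$ is supported on the finite set $\cT_N^*\times\cT_N^*$ (it coincides with the finite matrix $\wit\Pi^{(N)}$ extended by zero), and is therefore of finite rank. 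Being a norm limit of finite-rank operators, $\wi\Pi$ is compact on $\LL^2(\NN^2,\nu)$, which, together with reversibility, completes the proof.

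There is essentially no hard step: the argument is just a diagram-chase through the two preceding results. The only point that needs care is verifying that $\wi\Pi^{(N)}$ has finite rank, i.e.\ understanding what happens to the two-dimensional chain built from a $Z$-chain that is absorbed instantly from states above level $N$; but this is immediate from the explicit transition formulas~(\ref{eq:neutral-trans}).
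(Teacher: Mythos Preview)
Your proposal is correct and follows exactly the approach the paper intends: the paper states the corollary as ``an immediate consequence of Lemma~\ref{PiPi} and Proposition~\ref{compnorm}'', and you have simply spelled out that consequence (including the finite-rank verification for $\wi\Pi^{(N)}$, which the paper also notes just after Proposition~\ref{compnorm}). There is nothing to add.
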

% \proof
% According to Proposition \ref{compnorm}, with obvious notations,
% $\wi \Pi$ can be strongly approximated by $\wi \Pi^{(N)}$ for $N$ large,
% under the assumption that $\wi \Pi_0$ is compact and reversible.
% But the remark after the proof of Proposition \ref{compnorm} shows that
%  $\wi \Pi^{(N)}$ has finite rank,
% so $\wi \Pi$ is compact.\wwtbp

We can now extend Theorem~\ref{thm:diago-finite} to the infinite compact, reversible case.
\begin{thm}
  \label{thm:diago-infinite} Assume that $Z$ has an infinite state space, i.e.\ ${\cal S}_Z=\ZZ_+$.
  \begin{description}
  \item[\textmd{(a)}] Theorem~\ref{thm:diago-finite}~(a) also holds true in this case.
  \item[\textmd{(b)}] If $\wi{\Pi}_0$ is compact and reversible w.r.t.\ the measure $(\mu_n)_{n\in\NN}$, then,
    there exists an orthonormal basis of $\LL^2(\NN^2,\nu)$ of right eigenvectors of $\wi{\Pi}$ of the
    form~(\ref{eq:eigenvector-v}) for $d\geq 2$, where $\nu$ is defined in~(\ref{eq:def-nu}).
    Moreover, any right eigenvector of $\wi\Pi$ in $\LL^2(\NN^2,\nu)$ is a linear combination of
    eigenvectors of the form~(\ref{eq:eigenvector-v}) all corresponding to the same eigenvalue.
  \item[\textmd{(c)}] Under the same assumptions as~(b), there exists a family of right eigenvectors of $\Pi$
    of the form
    \begin{equation}
      \label{eq:basis-discrete}
      \{1\}\cup\Big\{P^{(1)}_1(i,j)u^{(1),l}_{i+j},\:P^{(2)}_1(i,j)u^{(1),l}_{i+j}\Big\}_{l\geq 1}\cup
      \bigcup_{d\geq 2}\Big\{P_d(i,j)u^{(d),l}_{i+j}\Big\}_{l\geq 1},
    \end{equation}
    which is a basis of the vector space
    \begin{multline}
      V \df \Big\{v\in\RR^{\ZZ_+^2}:
      v_{i,j}=a+\frac{i}{i+j}v^{(1)}_{i+j}+\frac{j}{i+j}v^{(2)}_{i+j}+v^{(3)}_{i,j},\ \forall i,j\in\ZZ_+, \\
      \text{\ with\ }a\in\RR,\ v^{(1)},v^{(2)}\in\LL^2(\NN,\mu)\text{\ and\
      }v^{(3)}\in\LL^2(\NN^2,\nu)\Big\} \label{eq:def-V-discrete}
    \end{multline}
    in the sense that, for all $v\in V$, there exist unique sequences $\{\alpha_l\}_{l\geq 1}$,
    $\{\beta_l\}_{l\geq 1}$, and $\{\gamma_{dl}\}_{d\geq 2,\ l\geq 1}$ and a unique $a\in\RR$ such that
    \begin{equation}
      \label{eq:basis-V-discrete}
      v_{i,j}=a+\sum_{l\geq 1}\alpha_l iu^{(1),l}_{i+j}+\sum_{l\geq 1}\beta_l ju^{(1),l}_{i+j}+\sum_{d\geq 2\
        l\geq 1}\gamma_{dl}P_d(i,j)u^{(d),l}_{i+j},
    \end{equation}
    where the series $\sum_l\alpha_lku^{(1),l}_k$ and $\sum_l\beta_lku^{(1),l}_k$ both converge for
    $\|\cdot\|_{\mu}$ and $\sum_{d,l}\gamma_{dl}P_d(i,j)u^{(d),l}_{i+j}$ converges for $\|\cdot\|_\nu$.
%     Moreover, any right eigenvector of $\Pi$ in $V$ is a linear combinations of eigenvectors in the
%     family~(\ref{eq:basis-discrete}), all corresponding to the same eigenvalue.
  \end{description}
\end{thm}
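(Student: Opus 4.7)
For part~(a), I would simply invoke Proposition~\ref{prop:stabilite}, which gives the identity $(\Pi v)_{(i,j)}=P_d(i,j)(\Pi_d u)_{i+j}$ for any $v$ of the form~(\ref{eq:eigenvector-v}), with no use of finiteness of ${\cal S}_Z$; so $\Pi_d u=\theta u$ immediately implies $\Pi v=\theta v$. When $d\geq 2$, Proposition~\ref{prop:Pd}~(c) shows that $v$ vanishes on the axes, hence also defines an eigenvector of $\wi\Pi$ on ${\cal S}^*$.

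For part~(b), the plan is to combine the Corollary preceding the theorem, which ensures that $\wi\Pi$ is compact and reversible w.r.t.\ $\nu$, with the orthogonal decomposition $\LL^2(\NN^2,\nu)=\bigoplus_{d\geq 2}{\cal V}'_d$ established inside the proof of Proposition~\ref{compnorm}. Each closed subspace ${\cal V}'_d$ is $\wi\Pi$-stable by Proposition~\ref{prop:stabilite}, and the restriction of $\wi\Pi$ to it is conjugate, via the isometry $u\mapsto v(d,u)$ from $\LL^2(\NN_d,\mu^{(d)})$, to the operator $\Pi_d$. Consequently, for each $d\geq 2$, $\Pi_d$ inherits compactness and self-adjointness on $\LL^2(\NN_d,\mu^{(d)})$, hence admits an orthonormal eigenbasis $\{u^{(d),l}\}_{l\geq 1}$; lifting these back through the isometry and collecting over $d\geq 2$ yields the sought orthonormal basis of $\LL^2(\NN^2,\nu)$, made of eigenvectors of the form~(\ref{eq:eigenvector-v}). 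The final claim that any eigenvector of $\wi\Pi$ in $\LL^2(\NN^2,\nu)$ is a linear combination of eigenvectors of the form~(\ref{eq:eigenvector-v}) corresponding to the same eigenvalue is then the standard orthogonality of distinct eigenspaces of a compact self-adjoint operator.

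For part~(c), I would proceed in two steps. First, I would prove uniqueness of the triple $(a,v^{(1)},v^{(2)})$ and of $v^{(3)}$ in~(\ref{eq:def-V-discrete}) by mimicking the boundary argument from the proof of Theorem~\ref{thm:eigen-SDE}~(b): extending $v^{(3)}$ by $0$ on $\ZZ_+^2\setminus\NN^2$ and evaluating the defining identity at $(i,0)$ and $(0,j)$ with $i,j\geq 1$ gives $v^{(1)}_i=v_{i,0}-a$ and $v^{(2)}_j=v_{0,j}-a$; plugging $v=0$ then forces the constant sequence $-a$ to belong to $\LL^2(\NN,\mu)$, and the natural non-integrability of $\mu$ on $\NN$ in the absorbing setting forces $a=0$, whence $v^{(1)}=v^{(2)}=0$ and then $v^{(3)}=0$. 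Second, for existence of the series expansion~(\ref{eq:basis-V-discrete}), I would decompose $v^{(3)}$ along the orthonormal basis of part~(b) to obtain the $\gamma_{dl}$; and I would use~(\ref{vvvd0-2}) in Proposition~\ref{compnorm} to conclude that $\Pi_1$ is itself compact and self-adjoint on $\LL^2(\NN,\mu^{(1)})$, hence admits an orthonormal eigenbasis $\{u^{(1),l}\}_{l\geq 1}$. The rescaled sequences $\tilde v^{(i)}_k\df v^{(i)}_k/k$ satisfy $\|\tilde v^{(i)}\|^2_{\mu^{(1)}}=2\|v^{(i)}\|^2_\mu<+\infty$, so they can be expanded in this basis, producing the coefficients $\alpha_l,\beta_l$ with the claimed $\|\cdot\|_\mu$-convergence of $\sum_l\alpha_l k u^{(1),l}_k$ and $\sum_l\beta_l k u^{(1),l}_k$.

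The main obstacle is the uniqueness step in~(c): without an implicit non-triviality condition on $\mu$ (e.g.\ $\mu(\NN)=+\infty$), the constant $a$ can be reabsorbed into $v^{(1)},v^{(2)}$, and more generally redundancies between the $d=0$, $d=1$ and $d\geq 2$ eigenfamilies must be excluded by a careful use of the boundary behaviour and of the fact that $P_d$ vanishes on the axes for $d\geq 2$. A minor technical point is to verify that the three convergence modes (pointwise constant, $\|\cdot\|_\mu$ for the $d=1$ parts, and $\|\cdot\|_\nu$ for the $d\geq 2$ part) are mutually compatible, so that summing them yields a well-defined element of $V$.
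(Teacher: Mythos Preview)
Your treatment of parts~(a) and~(b), and of the existence half of~(c), matches the paper's proof essentially line for line: the paper likewise invokes Proposition~\ref{prop:stabilite} for~(a), the decomposition~(\ref{eq:somme-directe}) together with compactness and self-adjointness of each $\Pi_d$ for~(b), and the rescaling $v^{(i)}_k/k\in\LL^2(\NN,\mu^{(1)})$ to expand the $d=1$ pieces in~(c).

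The one genuine gap is in your uniqueness argument for~(c), and you correctly flag it yourself. Your route---evaluating on the axes to get $v^{(1)}_i=v_{i,0}-a$, then forcing $a=0$ via $\mu(\NN)=+\infty$---fails because no such infiniteness hypothesis is available. The paper's fix is much simpler than you anticipate: it evaluates the identity at $(0,0)$. Since $v^{(1)},v^{(2)}$ are only defined on $\NN$ and $v^{(3)}\in\LL^2(\NN^2,\nu)$ is extended by $0$ on the axes, the terms $\frac{i}{i+j}v^{(1)}_{i+j}$, $\frac{j}{i+j}v^{(2)}_{i+j}$ and $v^{(3)}_{i,j}$ all vanish at $(0,0)$ by convention, yielding $a=v_{0,0}$ directly. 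With $a$ determined, your axis evaluations $v^{(1)}_i=v_{i,0}-a$ and $v^{(2)}_j=v_{0,j}-a$ then pin down $v^{(1)},v^{(2)}$, and $v^{(3)}$ follows. No integrability condition on $\mu$ is needed, and the ``redundancy'' you worry about does not arise precisely because the decomposition is required to hold at every point of $\ZZ_+^2$, including the origin.
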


% \begin{rem}
%   \label{rem:cas-infini}
%   Our method only allows us to handle the compact selfadjoint case. The case where $\Pi_0$ is self-adjoint
%   with compact resolvant is unclear since this does not seem to imply that the matrices $\Pi_d$ also have
%   compact resolvant, and thus have an orthonormal basis of right eigenvectors.
% \end{rem}

\begin{example}
  \label{ex:BD}
  Assume that $Z_n=X_n+Y_n$ is a birth and death process, i.e.\ that the matrix $\Pi_0$ is tridiagonal. Assume
  moreover that all the entries just above or below the diagonal are positive (except of course for $p_{0,1}$,
  which is $0$ since $p_{0,0}=1$). It is well-known in this case that there always exists a reversible measure
  $\mu$ for $\wi{\Pi}_0$. A well-known sufficient condition for the compactness of $\wi{\Pi}_0$ is the case
  where this operator is Hilbert-Schmidt, which translates in our reversible, discrete case as
  \begin{equation}
    \label{eq:Hilbert-Schmidt}
    \sum_{i,j\in\NN}p_{i,j}p_{j,i}<\infty.    
  \end{equation}
  For a birth and death process, letting $p_k$ (resp.\ $q_k$) denote the birth (resp.\ death) probability in
  state $k$ and $r_k \df 1-p_k-q_k\geq 0$, this gives
  $$
  \sum_{i\geq 0}\Big(r_i^2+p_iq_{i+1}\Big)<\infty.
  $$
  As a side remark, note that $\wi\Pi$ is not necessarily Hilbert-Schmidt when $\wi\Pi_0$ is, as the condition
  $\sum_{(i,j),(k,l)}\pi_{(i,j),(k,l)}\pi_{(k,l),(i,j)}<\infty$ is not equivalent
  to~(\ref{eq:Hilbert-Schmidt}).
\end{example}

\begin{proof}
Point~(a) can be proved exactly the same way as Theorem~\ref{thm:diago-finite}~(a).

The fact that compact selfadjoint operators admit an orthonormal basis of eigenvectors is classical. To
prove~(b), we only have to check that all these eigenvectors can be taken of the form~(\ref{eq:eigenvector-v})
for $d\geq 2$. This easily follows from the fact that $\Pi_d$ is compact selfadjoint in
$\LL^2(\NN_d,\mu^{(d)})$ for all $d\geq 2$, from~(\ref{eq:somme-directe}) and from Point~(a).

The proof of~(c) is similar to the proof of Theorem~\ref{thm:eigen-SDE}~(b). Fix $v\in V$ and define $a,\
v^{(1)},\ v^{(2)}$ and $v^{(3)}$ as in~(\ref{eq:def-V-discrete}). Since $\mu^{(1)}_k=k^2\mu_k/2$,
$(v^{(1)}_k/k)\in\LL^2(\NN,\mu^{(1)})$ and there exists $(\alpha_l)_{l\geq 1}$ such that
\begin{equation}
  \label{eq:cv-premier-serie}
  v^{(1)}_k=k\sum_{l\geq 1}\alpha_l u^{(1),l}_k=\sum_{l\geq 1}\alpha_l ku^{(1),l}_k, \quad\forall k\in{\cal S}_Z^*,  
\end{equation}
where the convergence of the first series holds for $\|\cdot\|_{\mu^{(1)}}$, and thus of the second series for
$\|\cdot\|_\mu$. A similar decomposition for $v^{(2)}$ and the use of~(b) for $v^{(3)}$ complete the proof
of~(\ref{eq:basis-V-discrete}).

It only remains to observe that, for all $v\in V$, the equation
$$
v_{i,j}=a+\frac{i}{i+j}v^{(1)}_{i+j}+\frac{j}{i+j}v^{(2)}_{i+j}+v^{(3)}_{i,j}
$$
uniquely characterizes $a\in\RR$, $v^{(1)},\ v^{(2)}\in\LL^2(\NN,\mu)$ and
$v^{(3)}\in\LL^2(\NN^2,\nu)$. Indeed, since $v^{(3)}_{0,j}=v^{(3)}_{i,0}=0$, one must have $a=v_{0,0}$,
$v^{(1)}_i=v_{i,0}-a$ and $v^{(2)}_j=v_{0,j}-a$.
\end{proof}

\section{On Dirichlet eigenvalues in the complement of triangular subdomains}
\label{sec:Dirichlet}

In this section, we consider the same model as in the previous section, and we assume either that ${\cal
  S}={\cal T}_N$ is finite or that ${\cal S}=\ZZ_+^2$ and the restriction $\wi{\Pi}_0$ of $\Pi_0$ to $\NN$ is
compact and reversible w.r.t. some measure $\mu$. We recall that ${\cal T}_k=\{(i,j)\in\ZZ_+^2:i+j\leq k\}$
and ${\cal T}^*_k={\cal T}_k\cap\NN^2$ and we define ${\cal S}^*_k \df {\cal S}^*\setminus{\cal T}^*_{k-1}$
for all $k\geq 2$. Note that ${\cal S}^*_2={\cal S}^*$. We also define ${\cal S}^*_1 \df {\cal
  S}\setminus(\{0\}\times\ZZ_+)$. Finally, for $k\geq 0$, we call $\wi\Pi_k$ the restriction of the matrix
$\Pi_0$ to $\{i\in{\cal S}_Z:i>k\}$, and for $k\geq 1$, $\wit\Pi_k$ the restriction of the matrix $\Pi$ to
${\cal S}_k^*$. Note that this notation is consistent with the previous definition of $\wi\Pi_0$ and that
$\wit\Pi_2=\wi\Pi$. Again, all the notations of this section are gathered for reference in
Appendix~\ref{sec:nota}.

\subsection{The case of finite state space}
\label{sec:finite-2}

Let us first assume that ${\cal S}={\cal T}_N$ for some $N\geq 1$.

For $1\leq k\leq N$, the Dirichlet eigenvalue problem for the matrix $\Pi$ in the set ${\cal S}^*_k$ consists
in finding $\theta\in\CC$ and $v$ in $\RR^{{\cal T}_N}$, such that
\begin{equation}
  \label{eq:Dirichlet}
  \begin{cases}
    (\Pi v)_{(i,j)}=\theta v_{(i,j)} & \forall (i,j)\in{\cal S}^*_k, \\
    v_{(i,j)}=0 & \forall (i,j)\in{\cal S}\setminus{\cal S}^*_k.
  \end{cases}
\end{equation}
This is equivalent to finding a right eigenvector of $\wit\Pi_k$ and extending this vector by $0$ to indices
in ${\cal S}\setminus{\cal S}^*_k$. For all $k\geq 1$, we define $\theta^D_k$ as the supremum of the moduli of
all Dirichlet eigenvalues in ${\cal S}^*_k$. By Perron-Fr\"obenius' theory, $\theta^D_k$ is a Dirichlet
eigenvalue in ${\cal S}_k^*$.

For all $d\geq 0$, we also define $\theta^{(d)}$ as the supremum of the moduli of all eigenvalues of $\Pi$
corresponding to right eigenvectors of the form $P_d(i,j)u_{i+j}$. Again, by
Theorem~\ref{thm:diago-finite}~(a) and Perron-Fr\"obenius' theory, $\theta^{(d)}$ is an eigenvector of $\Pi$
of the form $P_d(i,j)u_{i+j}$. Note that, for all $d\geq 2$, because of Proposition~\ref{prop:Pd}~(c) and~(e),
any right eigenvector of $\Pi$ of the form $P_d(i,j)u_{i+j}$ is a Dirichlet eigenvector of $\Pi$ in ${\cal
  S}^*_{d'}$ for all $1\leq d'\leq d$. In particular, $\theta^D_{d'}\geq\theta^{(d)}$. The next result gives
other inequalities concerning these eigenvealues.

\begin{thm}
  \label{thm:Dirichlet-finite}
  Assume that ${\cal S}={\cal T}_N$. 
  \begin{description}
  \item[\textmd{(a)}] Then,
    $$
    \begin{array}{rcccccccccl}
      & & \theta^D_1 & \geq & \theta^D_2 & \geq & \theta^D_3 & \geq \ldots \geq & \theta^D_{N-1} & \geq &
      \theta^D_N \\[1mm] & & \rotatebox[origin=c]{90}{$=$} & & \rotatebox[origin=c]{90}{$=$} & &
      \rotatebox[origin=c]{90}{$\leq$} & \ldots & \rotatebox[origin=c]{90}{$\leq$} & &
      \ \rotatebox[origin=c]{90}{$=$} \\[2mm]  
      1=\theta^{(0)} & \geq & \theta^{(1)} & \geq & \theta^{(2)} & \geq & \theta^{(3)} & \geq\ldots\geq &
      \theta^{(N-1)} & \geq & \theta^{(N)}=p_{N,N}.
    \end{array}
%   1=\theta^{(0)}\geq\theta^D_1=\theta^{(1)}\geq\theta^D_2=\theta^{(2)}\geq\theta^D_3\geq\ldots
%   \geq\theta^D_{N-1}\geq\theta^D_N=\theta^{(N)}=p_{N,N}
    $$
  \item[\textmd{(b)}] If $\wi{\Pi}_{k-1}$ is irreducible for some $1\leq k\leq N-1$, then
    \begin{equation}
      \label{eq:Dirichlet-strict-1}
      \theta^D_{k}>\theta^D_{k+1},\qquad \theta^{(k)}>\theta^{(k+1)}      
    \end{equation}
    and
    \begin{equation}
      \label{eq:Dirichlet-strict-2}
      \theta^D_k>\theta^{(k)},\qquad\text{if\ }k\geq 3.      
    \end{equation}
    If $\wi\Pi_0$ is irreducible and $p_{i,0}>0$ for some $1\leq i\leq N$, then $\theta^{(1)}<1$.
  \end{description}
\end{thm}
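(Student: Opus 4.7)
The plan is to prove part~(a) by independently establishing the decreasingness of $(\theta^D_k)_k$ and of $(\theta^{(d)})_d$, then matching them at the endpoints and at $k=1,2$; part~(b) will then be obtained by the strict versions of the same arguments, except for $\theta^D_k>\theta^{(k)}$ which needs a separate structural idea.

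First I would settle the monotonicities. The chain $\theta^D_{k+1}\leq\theta^D_k$ is immediate since ${\cal S}^*_{k+1}\subset{\cal S}^*_k$ makes $\wit\Pi_{k+1}$ a principal submatrix of the nonnegative matrix $\wit\Pi_k$. For $\theta^{(d+1)}\leq\theta^{(d)}=\rho(\Pi_d)$ the raw entries of $\Pi_d$ are not monotone in $d$, so I would introduce a diagonal similarity: set $D_d=\mathrm{diag}\bigl(1/(n+d)\bigr)_{n\geq d+1}$ and check directly from~(\ref{eq:def-M_d}) that $(D_d^{-1}\Pi_{d+1}D_d)_{n,m}$ equals $p^{(d)}_{n,m}$ for $m>n$ and equals $\frac{(n+d)(m-d)}{(m+d)(n-d)}\,p^{(d)}_{n,m}$ for $m<n$, the last factor being $\leq 1$. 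Perron-Frobenius applied to the resulting entrywise domination and then to the principal submatrix $\Pi_d|_{\{n\geq d+1\}}$ gives $\theta^{(d+1)}\leq\rho(\Pi_d|_{\{n\geq d+1\}})\leq\theta^{(d)}$.

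Next I would identify the endpoints. The relations $\theta^{(0)}=1$ (constants are eigenvectors of the stochastic $\Pi_0$) and $\theta^{(N)}=p_{N,N}$ (the matrix $\Pi_N$ is the scalar $p_{N,N}$) are immediate, and $\theta^D_N=p_{N,N}$ because within ${\cal S}^*_N$ no state can leave the top level without exiting, so $\wit\Pi_N=p_{N,N}\,I$. For $\theta^D_2=\theta^{(2)}$ I would extend a Dirichlet eigenvector of $\wi\Pi$ by zero to a $\Pi$-eigenvector on ${\cal S}$ and decompose it via Theorem~\ref{thm:diago-finite}(b) as $\sum_d P_d(i,j)u^{(d)}_{i+j}$ with each $u^{(d)}$ an eigenvector of $\Pi_d$ for the same eigenvalue; since $P_d$ vanishes on both axes for $d\geq 2$ by Prop.~\ref{prop:Pd}(c) while $P_0$ and $P_1^{(1)}$ do not, the Dirichlet conditions on the two axes force the $d=0$ and $d=1$ components to vanish, hence $\mathrm{Sp}(\wi\Pi)=\bigcup_{d\geq 2}\mathrm{Sp}(\Pi_d)$ and the monotonicity just proved yields $\theta^D_2=\theta^{(2)}$. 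For $\theta^D_1=\theta^{(1)}$ I would exploit the block-triangular structure of $\wit\Pi_1$: a state $(i,0)$ only transitions to states $(i',0)$, so $\NN\times\{0\}\cap{\cal S}^*_1$ is absorbing (modulo exit at $(0,0)$) and carries exactly $\wi\Pi_0$, while its complement carries $\wi\Pi$. Since $\Pi_1=\mathrm{diag}(n)^{-1}\,\wi\Pi_0\,\mathrm{diag}(n)$, $\rho(\wi\Pi_0)=\theta^{(1)}$, and combining with $\rho(\wi\Pi)=\theta^{(2)}\leq\theta^{(1)}$ gives the identification. The remaining $\theta^D_k\geq\theta^{(k)}$ for $3\leq k\leq N-1$ are already noted in the text preceding the theorem.

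For part~(b), the strict monotonicities come from the same arguments upgraded with strict Perron-Frobenius: the support of $\Pi_k$ agrees with that of $\wi\Pi_{k-1}$, so irreducibility transfers, and the existence of a positive transition from $k$ to $\{k+1,\ldots,N\}$ yields $\theta^{(k+1)}<\theta^{(k)}$; similarly $\theta^D_k>\theta^D_{k+1}$ follows once one verifies irreducibility of $\wit\Pi_k$, which I would deduce from that of $\wi\Pi_{k-1}$ by noting that the two-dimensional chain can be driven up to the top level by births and then rearranged by alternating deaths and births of either type. The strict $\theta^{(1)}<1$ is immediate from Perron-Frobenius applied to the strictly substochastic irreducible matrix $\wi\Pi_0$. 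The main obstacle, and the argument I would save for last, is $\theta^D_k>\theta^{(k)}$ for $k\geq 3$: here the idea is that under irreducibility of $\wit\Pi_k$, its Perron eigenvalue $\theta^D_k$ is simple and admits a strictly positive eigenvector, whereas for $k\geq 3$ the polynomial $P_k$ takes both signs on $\NN^2$ (by Prop.~\ref{prop:Pd}(c),(d),(g); for instance $P_3(i,j)$ has the sign of $j-i$). Consequently no nonzero eigenvector of the form $P_k(i,j)u^{(k)}_{i+j}$ can be a positive multiple of the Perron vector, so if we had $\theta^D_k=\theta^{(k)}$ the one-dimensional Perron eigenspace would contain a sign-changing vector, a contradiction; hence $\theta^D_k>\theta^{(k)}$.
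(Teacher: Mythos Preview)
Your proof is correct and follows essentially the same architecture as the paper's: monotonicity of $(\theta^D_k)$ via nested domains, monotonicity of $(\theta^{(d)})$ via a diagonal rescaling and entrywise domination, identification at $k=1$ via the block-triangular form of $\wit\Pi_1$, at $k=2$ via Theorem~\ref{thm:diago-finite}(b), and the strict inequality $\theta^D_k>\theta^{(k)}$ for $k\geq 3$ via the sign change of $P_k$ on ${\cal S}^*_k$. The one noteworthy difference is your choice of similarity $D_d=\mathrm{diag}(1/(n+d))$ for the comparison $\Pi_{d+1}\preceq\Pi_d$: the paper instead works through the Collatz--Wielandt characterisation with the rescaling $\sqrt{g^{(d)}_n}\propto\sqrt{(n-d)(n+d)}$ inherited from the reversible measures $\mu^{(d)}$ of Proposition~\ref{compnorm}; your one-sided factor $n+d$ is a pleasant simplification that yields the same entrywise inequality and avoids invoking the $\mu^{(d)}$ machinery.
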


\begin{proof}
Since the matrix $\Pi_0$ is stochastic, it is clear from Theorem~\ref{thm:diago-finite}~(a) that
$\theta^{(0)}=1$. By~(\ref{eq:def-M_d}), $p^{(1)}_{n,m}=\frac{m}{n}p_{n,m}$ for all $1\leq n,m\leq N$ and
thus $\wi\Pi_0 u=\theta u$ if and only if $\Pi_1 v=\theta v$, where $v_n=u_n/n$. Hence, the largest
eigenvalue of $\wi\Pi_0$ is $\theta^{(1)}$. Since
$$
\Pi_0=
\begin{pmatrix}
  1 & 0 \\ v & \wi\Pi_0
\end{pmatrix}
$$
for some (column) vector $v\in\RR_+^{N}$, it is clear that $\theta^{(1)}\leq\theta^{(0)}$.  Ordering
conveniently the states in ${\cal S}^*_1$, the matrix $\wit\Pi_1$ has the form
\begin{equation}
  \label{eq:Pi^(1)}
  \wit\Pi_1=\begin{pmatrix}
    \wi{\Pi}_0 & 0 \\ Q & \wit\Pi_2
  \end{pmatrix}
\end{equation}
for some rectangular nonnegative matrix $Q$, since the set $\{0,1,\ldots,N\}\times\{0\}$ is absorbing for the
Markov chain. Again, since this matrix is block triangular, we have $\theta^D_1=\max\{\theta^{(1)},\theta^D_2\}$.
Since $\wit\Pi_2=\wi\Pi$, Theorem~\ref{thm:diago-finite}~(b) shows that 
$$
\theta^D_2=\sup_{k\geq
  2}\theta^{(k)}.
$$
Since in addition $\wit\Pi_N=p_{N,N}\text{Id}$ and $\Pi_N=p_{N,N}$, Theorem~\ref{thm:Dirichlet-finite}~(a)
will hold true if we prove that the sequences $(\theta^{(k)})_{1\leq k\leq N}$ and $(\theta^D_k)_{2\leq k\leq
  N}$ are both non-increasing.

By Perron-Fr\"obenius' characterization of the spectral radius of nonnegative matrices
(cf.\ e.g.~\cite{gantmacher-86}), for all $1\leq k\leq N$,
\begin{equation}
  \label{eq:PF-characterization-1}
  \theta^{(k)}=\sup_{u\in\RR^{\NN_k},\,u\geq 0,\,u\not=0}\ \inf_{i\in\NN_k}\frac{\sum_{j\in\NN_k}p^{(k)}_{i,j}u_j}{u_i},
\end{equation}
where, by convention, the fraction in the r.h.s.\ is $+\infty$ if $u_i=0$. Using the notation of the proof of
Proposition~\ref{compnorm}, for all $1\leq k\leq N-1$ and $u\in\RR_+^{\NN_{k+1}}\setminus\{0\}$, we have
\begin{align*}
  \inf_{i\in\NN_{k+1}}\frac{\sum_{j\in\NN_{k+1}}p^{(k+1)}_{i,j}u_j}{u_i} & =
  \inf_{i\in\NN_{k+1}}\frac{\sum_{j\in\NN_{k+1}}p^{(k)}_{i,j}\wi
    u_j\sqrt{g^{(k)}_i/g^{(k)}_j}h^{(k)}_{i,j}}{\wi u_i} \\ & \leq
  \inf_{i\in\NN_{k}}\frac{\sum_{j\in\NN_{k}}p^{(k)}_{i,j}\wi
    u_j}{\wi u_i}.
\end{align*}
Taking the supremum over $u\in\RR_+^{\NN_{k+1}}\setminus\{0\}$ yields 
\begin{equation}
  \label{eq:ineq-PF-1}
  \theta^{(k+1)}\leq\theta^{(k)}.  
\end{equation}

For all $k\geq 2$, the Dirichlet eigenvectors in ${\cal S}_k^*$ belong to the vector space
$$
{\cal U}_k \df \{v\in\RR^{{\cal T}_N}:v=0\ \text{on\ }{\cal T}_N\setminus{\cal S}^*_k\}.
$$
By Perron-Fr\"obenius' theory again, for all $k\geq 2$,
\begin{equation}
  \label{eq:PF-characterization}
  \theta^D_k=\sup_{w\in {\cal U}_k\setminus\{0\},\ w\geq 0}\ \inf_{(i,j)\in{\cal S}^*_k}\frac{\sum_{(k,l)\in{\cal
        S}^*_k}\pi_{(i,j),(k,l)}w_{(k,l)}}{w_{(i,j)}}.
\end{equation}
Since ${\cal S}^*_k\subset{\cal S}^*_{k-1}$, for all $2\leq k\leq N-1$,
\begin{equation}
  \label{eq:ineq-PF}
  \theta^D_{k+1}=\sup_{w\in {\cal U}_{k+1}\setminus\{0\},\ w\geq 0}\inf_{(i,j)\in{\cal
      S}^*_{k}}\frac{\sum_{(k,l)\in{\cal S}^*_{k}}\pi_{(i,j),(k,l)}w_{(k,l)}}{w_{(i,j)}}\leq\theta^D_{k}.
\end{equation}
This ends the proof of~(a).

In the case where $\wi{\Pi}_{k-1}$ is irreducible for some $2\leq k\leq N-1$, it is clear that $\wit\Pi_k$ and
$\Pi_k$ are both irreducible. Then, by Perron-Fr\"obenius' theory, $\theta^D_k$ (resp.\ $\theta^{(k)}$) is an
eigenvalue of $\wit\Pi_k$ (resp.\ $\Pi_k$) with multiplicity one, and the corresponding nonnegative
eigenvector has all its entries positive. In addition, $\theta^D_k$ (resp.\ $\theta^{(k)}$) is the only
eigenvalue of $\wit\Pi_k$ (resp.\ $\Pi_k$) corresponding to a positive eigenvector. In particular, the
supremum in~(\ref{eq:PF-characterization}) (resp.~(\ref{eq:PF-characterization-1})\,) is attained only at
vectors $w\in V^{(k-1)}$ having all coordinates corresponding to states in ${\cal S}^*_{k-1}$ positive (resp.\
at vectors $u\in(0,\infty)^{\NN_k}$). Hence, the inequalities in~(\ref{eq:ineq-PF}) and~(\ref{eq:ineq-PF-1})
are strict. 

In the case where $\wi{\Pi}_0$ is irreducible, the same argument shows that $\theta^{(1)}>\theta^{(2)}$, and
since $\theta^{(1)}=\theta^D_1$ and $\theta^{(2)}=\theta^D_2$, we also have $\theta^D_1>\theta^D_2$. This ends
the proof of~(\ref{eq:Dirichlet-strict-1}).

In the case where $\wi\Pi_{k-1}$ is irreducible for $3\leq k\leq N-1$, let $(u_i)_{i\in\NN_k}$ be a positive
right eigenvector of $\Pi_k$. Then the vector $P_k(i,j)u_{i+j}$ belongs to ${\cal U}_k$ and its restriction to
${\cal S}^*_k$ is a right eigenvector of $\wit\Pi_k$. However, its has positive and negative coordinates by
Proposition~\ref{prop:Pd}~(g). Therefore~(\ref{eq:Dirichlet-strict-2}) is proved.

Finally, if $\wi{\Pi}_0$ is irreducible and $p_{i,0}>0$ for some $1\leq i\leq N$, then the absorbing state $0$
is accessible by the Markov chain from any initial state (possibly after several steps). It is then standard
to prove that there exists $n$ such that the sums of the entries of each line of $(\wi\Pi_0)^n$ is strictly
less than 1. This proves that $(\wi\Pi_0)^n$ cannot have 1 as eigenvalue, and thus $\theta^{(1)}<1$.
\end{proof}

\subsection{The case of infinite state space}
\label{sec:infinite-2}

Our goal here is to extend the previous result to the case where ${\cal S}=\ZZ_+^2$ and $\wi\Pi_0$ is compact
reversible. So let us assume that $\wi\Pi_0$ is compact and is reversible w.r.t.\ some measure $\mu$.

In the case of infinite state space, when $k\geq 2$ the Dirichlet eigenvalue problem for $\Pi$ in ${\cal
  S}_k^*$ consists in finding $\theta\in\CC$ and $v\in\LL^2(\ZZ_+^2,\nu)$ (where the measure $\nu$ is extended
by convention by zero on $\ZZ_+\times\{0\}\cup\{0\}\times\ZZ_+$) satisfying~(\ref{eq:Dirichlet}). Defining the
vector space where Dirichlet eigenvectors are to be found
$$
{\cal U}_k \df \{v\in\LL^2(\ZZ_+^2,\nu):v=0\ \text{on\ }\ZZ_+^2\setminus{\cal S}^*_k\},
$$
the supremum $\theta^D_k$ of the moduli of all Dirichlet eigenvalues in ${\cal S}_k^*$ is given by
\begin{equation}
  \label{eq:caract-theta-D}
  \theta_k^{D} = \sup_{u\in {\cal U}_k\setminus\{0\}}\frac{\left| \langle u,\wi\Pi u\rangle_\nu\right|}{\|u\|^2_\nu}.  
\end{equation}

In view of Theorem~\ref{thm:diago-infinite}~(c), the natural space to define the Dirichlet eigenvalue problem
in ${\cal S}_1^*$ is
\begin{multline*}
  {\cal U}_1 \df \Big\{v\in\RR^{\ZZ_+^2}:v=0\text{\ in\ }\{0\}\times\ZZ_+\text{\ and\
    }v_{i,j}=\frac{i}{i+j}v^{(1)}_{i+j}+v^{(3)}_{i,j}\ \forall(i,j)\in{\cal S}^*_1, \\
    \text{\ where\ }v^{(1)}\in\LL^2(\NN,\mu)\text{\ and\ }v^{(3)}\in\LL^2(\NN^2,\nu)\Big\},
\end{multline*}
equipped with the norm $\|\cdot\|_{{\cal U}_1}$, where $\|v\|_{{\cal
    U}_1}^2=\|v^{(1)}\|^2_\mu+\|v^{(3)}\|^2_\nu$ (this norm is well-defined since $v^{(1)}$ and $v^{(3)}$ are
uniquely defined from $v\in {\cal U}_1$). Then, the Dirichlet eigenvalue problem in ${\cal S}_1^*$ consists in
finding $\theta\in\CC$ and $v\in{\cal U}_1$ satisfying~(\ref{eq:Dirichlet}). We also define $\theta^D_1$ as
the supremum of the moduli of all Dirichlet eigenvalues in ${\cal S}_1^*$.

For all $d\geq 1$, we also define $\theta^{(d)}$ as the supremum of the moduli of all eigenvalues of $\Pi$
corresponding to right eigenvectors of the form $P_d(i,j)u_{i+j}$ with $u\in\LL^2(\NN_d,\mu^{(d)})$. By
Theorem~\ref{thm:diago-infinite}~(a),
$$
\theta^{(d)} = \sup_{u\in\LL^2(\NN_d,\mu^{(d)})\setminus\{0\}}\frac{\lve \langle u,\Pi_d u\rangle_{\mu^{(d)}}
  \rve}{\|u\|^2_{\mu^{(d)}}},\quad\forall d\geq 0.
$$
In addition, using the notation ${\cal V}'_d$ defined in the proof of Proposition~\ref{compnorm}, it follows
from~(\ref{eq:somme-directe}) that
$$
\theta^{(d)} =\sup_{u\in{\cal V}'_d}\frac{\lve \langle u,\wi\Pi u\rangle_\nu\rve}{\|u\|^2_\nu},\quad\forall d\geq 2.
$$
Comparing this with~(\ref{eq:caract-theta-D}), we again deduce from Proposition~\ref{prop:Pd}~(c) and~(e) that
$\theta^D_{d'}\geq\theta^{(d)}$ for all $2\leq d'\leq d$.

Finally, since the matrix $\Pi_0$ is not reversible, we need to define $\theta^{(0)}$ in a slightly different
way: $\theta^{(0)}$ is the supremum of the moduli of all eigenvalues of $\Pi$ corresponding to right
eigenvectors of the form $P_d(i,j)u_{i+j}$ with $u=a\mathbf{1}+v$, where $a\in\RR$, $\mathbf{1}$ is the vector
of $\RR^{\ZZ_+}$ with all coordinates equal to 1, $v\in\LL^2(\NN,\mu)$ with the convention $v_0=0$.

\begin{thm}
  \label{thm:Dirichlet-infinite}
  Assume that ${\cal S}=\ZZ_+^2$ and that $\wi\Pi_0$ is compact and reversible w.r.t.\ a positive measure
  $\mu$. Then, for all $d\geq 1$, $\theta^D_d$ is a Dirichlet eigenvalue of $\Pi$ in the set ${\cal S}^*_d$
  and $\theta^{(d)}$ is a right eigenvalue of $\Pi$ for an eigenvector of the form
  $P_d(i,j)u_{i+j}$ with $u\in\LL^2(\NN_d,\mu^{(d)})$. In addition,
  $$
  \begin{array}{rcccccccccl}
    & & \theta^D_1 & \geq & \theta^D_2 & \geq & \theta^D_3 & \geq & \theta^D_4 & \geq\ldots \\[1mm] & &
    \rotatebox[origin=c]{90}{$=$} & & \rotatebox[origin=c]{90}{$=$} & &
    \rotatebox[origin=c]{90}{$\leq$} & & \rotatebox[origin=c]{90}{$\leq$} & \\[2mm]  
    1=\theta^{(0)} & \geq & \theta^{(1)} & \geq & \theta^{(2)} & \geq & \theta^{(3)} & \geq & \theta^{(4)} &
    \geq\ldots
  \end{array}
  $$
\end{thm}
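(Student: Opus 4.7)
The plan is to mirror the argument of Theorem~\ref{thm:Dirichlet-finite}, replacing the Perron--Fr\"obenius characterization of spectral radii by the spectral theory of compact selfadjoint operators on the relevant Hilbert spaces.

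First I would record that each of the operators $\Pi_d$ ($d\geq 1$), $\wi\Pi$, and the restriction $\wit\Pi_k$ of $\wi\Pi$ to $\cS^*_k$ (for $k\geq 2$) is compact and selfadjoint on its natural Hilbert space. For $d\geq 2$ this was already noted in the proof of Theorem~\ref{thm:diago-infinite}~(b), and the same argument gives compactness and selfadjointness of $\wit\Pi_k$ as a restriction of $\wi\Pi$ to the closed invariant subspace $\cU_k\subset\LL^2(\NN^2,\nu)$. For $d=1$, the map $\LL^2(\NN,\mu)\ni u\mapsto (nu_n)_{n\geq 1}\in\LL^2(\NN_1,\mu^{(1)})$ is a Hilbert space isometry intertwining $\wi\Pi_0$ with $\Pi_1$, so $\Pi_1$ inherits compactness and selfadjointness from the hypothesis on $\wi\Pi_0$. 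The standard spectral theorem then yields that $\theta^{(d)}=\vvv\Pi_d\vvv_d$ is attained as an eigenvalue of $\Pi_d$ for every $d\geq 1$, and by Theorem~\ref{thm:diago-infinite}~(a) the associated $u^{(d)}$ produces a right eigenvector $P_d(i,j)u^{(d)}_{i+j}$ of $\Pi$; similarly $\theta^D_k$ is attained via~(\ref{eq:caract-theta-D}).

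Next I would establish the vertical and horizontal inequalities. For $d\geq 1$, Proposition~\ref{compnorm}~(\ref{vvvd0-1}) applied with $\Pi'_0\equiv 0$ (so $\Pi'_d\equiv 0$) gives $\vvv\Pi_{d+1}\vvv_{d+1}\leq\vvv\Pi_d\vvv_d$, i.e.\ $\theta^{(d+1)}\leq\theta^{(d)}$ by compact selfadjointness. The case $d=0$ splits into $\theta^{(0)}=1$ (since $\mathbf{1}$ is fixed by $\Pi_0$) and $\theta^{(1)}=\vvv\wi\Pi_0\vvv_0\leq 1$ because $\wi\Pi_0$ is sub-Markovian. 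The inclusion $\cU_{k+1}\subset\cU_k$ combined with~(\ref{eq:caract-theta-D}) yields $\theta^D_{k+1}\leq\theta^D_k$ for $k\geq 2$. The slanted inequality $\theta^D_k\geq\theta^{(k)}$ holds because, by Proposition~\ref{prop:Pd}~(e), the eigenvector $P_k(i,j)u^{(k)}_{i+j}$ attaining $\theta^{(k)}$ vanishes on $\cT_{k-1}$ and so belongs to $\cU_k$. Finally, $\theta^D_2=\theta^{(2)}$ follows from the orthogonal decomposition $\LL^2(\NN^2,\nu)=\bigoplus_{d\geq 2}\cV'_d$ of~(\ref{eq:somme-directe}): each $\cV'_d$ is $\wi\Pi$-invariant with operator norm $\theta^{(d)}$, so $\vvv\wi\Pi\vvv=\sup_{d\geq 2}\theta^{(d)}=\theta^{(2)}$ by the monotonicity above.

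The main technical obstacle is the treatment of $\theta^D_1$, whose defining space $\cU_1$ carries the non-standard Hilbert norm $\|\cdot\|_{\cU_1}$ and mixes the absorbing axis $\NN\times\{0\}$ with the interior $\NN^2$. The key point is that $\Pi$ preserves the decomposition $v=v^{(1)}\oplus v^{(3)}$ in a block-\emph{diagonal} way: by Proposition~\ref{prop:stabilite} combined with the $\LL^2$-isometry recalled above, $\Pi$ acts on the $P^{(1)}_1$-component as $\wi\Pi_0$ on $\LL^2(\NN,\mu)$, and on the $v^{(3)}$-component $\LL^2(\NN^2,\nu)$ as $\wi\Pi$, with no off-diagonal coupling (Proposition~\ref{prop:Pd}~(c) ensures that $\wi\Pi v^{(3)}$ remains supported in $\NN^2$). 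The resulting operator on $\cU_1$ is then compact and selfadjoint with respect to $\|\cdot\|_{\cU_1}$, whence $\theta^D_1=\max(\vvv\wi\Pi_0\vvv_0,\vvv\wi\Pi\vvv)=\max(\theta^{(1)},\theta^{(2)})=\theta^{(1)}$ by the monotonicity already established. The careful verification of this block-diagonal structure, and of the compatibility of $\|\cdot\|_{\cU_1}$ with it, is the most delicate part of the argument.
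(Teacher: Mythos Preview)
Your argument is correct and takes a genuinely different route from the paper's proof. The paper establishes the chain of inequalities among $\theta^{(k)}$ and $\theta^D_k$ by a finite-dimensional approximation: it introduces the truncations $\wi\Pi^{(N)}$, defines the corresponding $\theta^{(k,N)}$ and $\theta^{D,(N)}_k$, proves via Lemma~\ref{PiPi} and Proposition~\ref{compnorm} that these converge to $\theta^{(k)}$ and $\theta^D_k$ as $N\to\infty$, and then imports all the inequalities wholesale from the finite-state Theorem~\ref{thm:Dirichlet-finite}. You bypass this approximation entirely by working directly in the infinite-dimensional Hilbert spaces: Proposition~\ref{compnorm} with $\Pi'_0\equiv 0$ gives $\theta^{(d+1)}\leq\theta^{(d)}$ immediately, the inclusion $\cU_{k+1}\subset\cU_k$ in~(\ref{eq:caract-theta-D}) gives $\theta^D_{k+1}\leq\theta^D_k$, and the decomposition~(\ref{eq:somme-directe}) gives $\theta^D_2=\theta^{(2)}$. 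Your treatment of $\theta^D_1$ is also sharper: where the paper invokes the block-\emph{triangular} form~(\ref{eq:Pi^(1)}) (with respect to the axis/interior splitting) and then checks separately that the boundary eigenvalue is achieved, you observe that with respect to the decomposition $\cU_1=W_1\oplus W_3$ (with $W_1$ the $P^{(1)}_1$-component) the action of $\Pi$ is actually block-\emph{diagonal}, with blocks unitarily equivalent to $\wi\Pi_0$ and $\wi\Pi$, so that $\theta^D_1=\max(\theta^{(1)},\theta^{(2)})=\theta^{(1)}$ drops out at once. Your approach is more direct and avoids the approximation machinery; the paper's route has the advantage that the finite-case inequalities (including the strict ones under irreducibility) are already in hand.

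Two small points to clean up. First, $\cU_k$ is not an \emph{invariant} subspace for $\wi\Pi$ when $k\geq 3$ (mass can leak from $\{i+j\geq k\}$ to $\{i+j<k\}$); what you mean is that $\wit\Pi_k$ is the compression $\mathrm{Pr}_{\cU_k}\wi\Pi\,\mathrm{Pr}_{\cU_k}$, which is still compact selfadjoint on $\cU_k$, so the conclusion stands. Second, Proposition~\ref{compnorm} is stated for two Markov kernels, so taking $\Pi'_0\equiv 0$ is formally outside its hypotheses; you should note that its proof uses only reversibility of $\wi\Pi'_0$ with respect to $\mu$ and the sign condition $\wi\Pi_0-\wi\Pi'_0\geq 0$, both of which hold trivially for $\Pi'_0=0$.
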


\begin{proof}
For all $k\geq 1$, the fact that $\theta^{(k)}$ is an eigenvalue of $\Pi$ for a right eigenvector of the
form $P_k(i,j)u_{i+j}$ with $u\in\LL^2(\NN_k,\mu^{(k)})$ follows from Theorem~\ref{thm:diago-infinite}~(a)
and from the fact that $\Pi_k$ is compact reversible. Indeed, $\vvv\Pi_k\vvv_k$ is an eigenvalue of the
compact reversible matrix $\Pi_k$ (the corresponding eigenvector can be obtained for example as the limit of
nonnegative eigenvectors of finite dimensional projections of $\Pi_k$). The result follows since,
necessarily, $|\theta^{(k)}|\leq\vvv\Pi_k\vvv_k$.

For all $k\geq 2$, we define $\Pi^{(k)}$ as the matrix whose restriction to ${\cal S}_k^*$ is $\wit\Pi_k$ and
with all other coordinates equal to zero. As the projection of a compact operator, this matrix is
compact. Since it is trivially reversible for the measure $\nu$, Theorem~\ref{thm:diago-infinite}~(b) applies
to $\Pi^{(k)}$. Then, Proposition~\ref{compnorm} tells us that $\theta^D_k=\vvv\Pi_2^{(k)}\vvv_2$, where
$\Pi_2^{(k)}$ is defined from $\Pi^{(k)}$ as $\Pi_2$ has been defined from $\Pi$. Therefore, the fact that
$\theta^D_k$ is a Dirichlet eigenvalue for $\Pi$ in ${\cal S}_k^*$ can be deduced exactly as above.

Recall the definition of $\wi\Pi^{(N)}$ in Section~\ref{sec:approx}. For any $N\in\NN$, replacing $\wi \Pi$
by $\wi \Pi^{(N)}$, we define similarly as above the quantities $\theta_k^{D,(N)}$ and $\theta^{(k,N)}$. Due
to the remark after the proof of Proposition~\ref{compnorm}, we are again brought back to the finite
framework.  The following result is an immediate consequence of Lemma~\ref{PiPi} and
Proposition~\ref{compnorm}.
\begin{lem}
We have
\begin{align*}
  \forall k\geq 2,\qquad\lim_{N\ri\iy}\theta_k^{D,(N)} & =\theta_k^{D} \\
  \forall k\geq 1,\qquad\lim_{N\ri\iy}\theta^{(k,N)} & =\theta^{(k)}
\end{align*}
\end{lem}
From this lemma and Theorem~\ref{thm:Dirichlet-finite} immediately follow all the inequalities in
Theorem~\ref{thm:Dirichlet-infinite} that concern $\theta^{D}_k$ for $k\geq 2$ and $\theta^{(k)}$ for $k\geq 1$.

As in the finite case, we easily deduce from the facts that $p^{(1)}_{i,j}=\frac{j}{i}p_{i,j}$ and
$\mu^{(1)}_i=2i^2\mu_i$ that $\wi\Pi_0u=\theta u$ with $u\in\LL^2(\NN,\mu)$ iff $\Pi_1 v=\theta v$ with
$v_i=u_i/i$ and $v\in\LL^2(\NN,\mu^{(1)})$. Therefore,
$$
\theta^{(1)}=\sup_{u\in\LL^2(\NN,\mu)\setminus\{0\}}\frac{\langle
  u,\wi\Pi_0\rangle_\mu}{\|u\|_\mu}.
$$
Since for all $a\in\RR$ and $v\in\LL^2(\NN,\mu)$, $\Pi_0(a\mathbf{1}+v)=a\mathbf{1}+\wi\Pi_0v$, we deduce that
$\theta^{(0)}=\sup\{1;\theta^{(1)}\}$.  Since $\wi\Pi_0$ is substochastic and reversible w.r.t.\ $\mu$, we
have for all $u\in\LL^2(\NN,\mu)$
$$
\frac{\langle
  u,\wi\Pi_0\rangle_\mu}{\|u\|_\mu}=\frac{\sum_{i,j\geq 1}\mu_i p_{i,j}u_i u_j}{\sum_{i\geq
    1}u_i^2\mu_i}\leq\frac{\sqrt{\sum_{i,j} u_i^2\mu_ip_{i,j}}\sqrt{\sum_{i,j}u_j^2\mu_i p_{i,j}}}{\sum_{i\geq
    1}u_i^2\mu_i}\leq 1.
$$
This yields
$$
\theta^{(0)}=1\geq\theta^{(1)}.
$$

In order to complete the proof, it only remains to check that $\theta^D_1=\theta^{(1)}$ and that $\theta^D_1$
is a Dirichlet eigenvalue in ${\cal S}_1^*$. As in the finite case, $\wit\Pi_1$ has the block-triangular
form~(\ref{eq:Pi^(1)}). Therefore, we obviously have $\theta^D_1\geq\theta^D_2$. In addition, any Dirichlet
eigenvalue in ${\cal S}_1^*$ which corresponds to an eigenvector in ${\cal U}_1$ which is nonzero on the set
of indices $\NN\times\{0\}$, must be an eigenvalue of $\wi\Pi_0$ corresponding to a right eigenvector in
$\LL^2(\NN,\mu)$. Now, if $u\in\LL^2(\NN,\mu)$ satisfies $\wi\Pi_0 u=\theta^{(1)}u$, then $\Pi_1
v=\theta^{(1)}v$ with $v_i=u_i/i$ and it follows from Theorem~\ref{thm:diago-infinite}~(a) that the vector
$\frac{i}{i+j}u_{i+j}$ is a right eigenvector of $\wit\Pi_1$. Since this vector obviously belongs to ${\cal
  U}_1$, we obtain that $\theta^D_1=\theta^{(1)}$ and that $\theta^D_1$ is a Dirichlet eigenvalue in ${\cal
  S}_1^*$.
\end{proof}

\section{Application to quasi-stationarity in nearly neutral finite absorbed two-dimensional Markov chains}
\label{sec:QSD}

In this section, we restrict ourselves to the finite state space case for simplicity: let ${\cal S}={\cal
  T}_N$ for some $N\in\NN$.  The first coordinate will be referred to as \textbf{type~1} and the second
coordinate as \textbf{type~2}.  Recall that the sets $\ZZ_+\times\{0\}$, $\{0\}\times\ZZ_+$ and $\{(0,0)\}$
are absorbing for the N2dMC considered above, which means that each sub-population in the model can go
extinct. This means that the transition matrix $\Pi$ has the form
\begin{equation}
  \label{eq:trans}
  \Pi=\begin{pmatrix} 1 & 0 \\ r & Q \end{pmatrix},
\end{equation}
after ordering the states as $(0,0)$ first. Ordering the states in ${\cal S}\setminus\{0\}$ as
$\{1,\ldots,N\}\times\{0\}$ first and $\{0\}\times\{1,\ldots,N\}$ second, the matrix $Q$ has the form
\begin{equation}
  \label{eq:Q-2-types}
  Q=\begin{pmatrix} Q_1 & 0 & 0 \\ 0 & Q_2 & 0 \\ R_1 & R_2 & Q_3 \end{pmatrix}
\end{equation}
where $Q_i$ ($1\leq i\leq 3$) are square matrices and $R_i$ ($1\leq i\leq 2$) rectangular matrices.

In this section, we study the problem of quasi-stationary distributions (QSD) and quasi-limiting distributions
(QLD, see the introduction) for Markov processes, not necessarily neutral, whose transition matrix has the
form~(\ref{eq:trans}--\ref{eq:Q-2-types}).  The classical case~\cite{darroch-seneta-65} for such a study is
the case when $Q$ is irreducible, which does not hold here. A general result is proved in
Subsection~\ref{sec:Yaglom-general}. Our results of Section~\ref{sec:Dirichlet} are then applied in
Subsection~\ref{sec:Yaglom-neutral} to study the quasi-limiting distribution of nearly neutral two-dimensional
Markov chains.

\subsection{Yaglom limit for general absorbing two-dimensional Mar\-kov chains}
\label{sec:Yaglom-general}

Let $(X_n,Y_n,n\geq 0)$ be a Markov chain on ${\cal S}={\cal T}_N$, with transition matrix of the
form~(\ref{eq:trans}--\ref{eq:Q-2-types}). We do not assume that this process is neutral. We call such
processes A2dMC for ``absorbed two-dimensional Markov chains''.

Under the assumption that the matrices $Q_1$, $Q_2$ and $Q_3$ are irreducible and aperiodic, Perron-Frobenius'
theory ensures the existence of a unique eigenvalue $\theta_i$ with maximal modulus for $Q_i$, which is real
and has multiplicity one. Moreover, $Q_i$ admits unique (strictly) positive right and left eigenvectors for
$\theta_i$, $u_i$ and $v_i$ respectively, normalised as $v_i\mathbf{1}=1$ and $v_iu_i=1$, where we use the
convention that $u_i$ is a row vector and $v_i$ is a column vector, and where $\mathbf{1}$ denotes the column
vector with adequate number of entries, all equal to 1. In the following result, we use the classical
identification of column vectors and measures: for example, $v_1=((v_1)_i)_{(i,0)\in{\cal
    S}\setminus\{(0,0)\}}$ is identified with $\sum_{(i,0)\in{\cal S}\setminus\{(0,0)\}}(v_1)_i\delta_i$, and
$v_3=((v_3)_{(i,j)})_{(i,j)\in{\cal S}^*}$ is identified with $\sum_{(i,j)\in{\cal
    S}\cap\NN^2}(v_3)_{(i,j)}\delta_{(i,j)}$.

With this notation, using the results of Darroch and Seneta~\cite{darroch-seneta-65}, $v_1\otimes\delta_0$ and
$\delta_0\otimes v_2$ are trivial QSDs for the Markov chain $(X,Y)$.

\begin{thm}
  \label{thm:Yaglom}
  Assume that the matrices $Q_1$, $Q_2$ and $Q_3$ are irreducible and aperiodic, $R_1\not=0$ and
  $R_2\not=0$. Then, for any $i\geq 1$ such that $(i,0)\in{\cal S}$,
  \begin{equation}
    \label{eq:Yaglom-1-type}
    \lim_{n\rightarrow+\infty}{\cal
      L}_{(i,0)}[(X_n,Y_n)\mid(X_n,Y_n)\not=(0,0)]=v_1\otimes\delta_0,
  \end{equation}
  and similarly for the initial state $(0,i)\in{\cal S}$, where ${\cal L}_{(i,j)}$ denotes the law of the
  Markov chain $(X_n,Y_n)_{n\geq 0}$ with initial consition $(i,j)$.

  Moreover, for any $(i,j)\in{\cal S}^*$,
  \begin{multline}
    \lim_{n\rightarrow+\infty}{\cal L}_{(i,j)}[(X_n,Y_n)
    \mid(X_n,Y_n)\not=(0,0)] \\ =
    \begin{cases}
      v_1\otimes\delta_0 & \text{if $\theta_1\geq\theta_3$ and
        $\theta_1>\theta_2$}, \\
      \delta_0\otimes v_2 & \text{if $\theta_2\geq\theta_3$ and
        $\theta_2>\theta_1$}, \\
      \displaystyle{\frac{v_3 +w_1\otimes\delta_0
      +\delta_0\otimes w_2}{1+w_1\mathbf{1}+w_2\mathbf{1}}} & \text{if
        $\theta_3>\theta_1,\theta_2$}, \\
      p_{i,j}\:v_1\otimes\delta_0+(1-p_{i,j})\:\delta_0\otimes v_2 &
      \text{if $\theta_1=\theta_2>\theta_3$}, \\
      q\:v_1\otimes\delta_0+(1-q)\:\delta_0\otimes v_2 &
      \text{if $\theta_1=\theta_2=\theta_3$}.
    \end{cases}
    \label{eq:Yaglom}
  \end{multline}
  where
  \begin{gather}
    w_i=v_3R_i(\theta_3I-Q_i)^{-1},\qquad i=1,2, \\
    p_{i,j}=\frac{\delta_{(i,j)}(\theta_1I-Q_3)^{-1}\:R_1u_1}
    {\delta_{(i,j)}(\theta_1I-Q_3)^{-1}\:(R_1u_1+R_2u_2)} \\
    \mbox{\textup{and}}\qquad q=\frac{v_3R_1u_1}{v_3(R_1u_1+R_2u_2)}.
  \end{gather}
\end{thm}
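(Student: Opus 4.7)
The plan is to exploit the block-lower-triangular structure of $Q$ to reduce the asymptotics of $\Pi^n$ to Perron--Frobenius behaviour of the three irreducible aperiodic blocks $Q_1,Q_2,Q_3$. For an initial condition $(i,0)$ with $i\geq 1$ the chain remains in the absorbing line $\{(k,0):k\geq 0\}$, so its conditional law given non-extinction is driven by $Q_1$ alone and~(\ref{eq:Yaglom-1-type}) follows immediately from the classical Darroch--Seneta theorem applied to $Q_1$; the case $(0,i)$ is symmetric.

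For an interior starting point $(i,j)\in\cS^*$, an easy induction on $n$ yields the block expansion
$$
Q^n=\begin{pmatrix}Q_1^n & 0 & 0\\ 0 & Q_2^n & 0\\ A_n & B_n & Q_3^n\end{pmatrix},\quad A_n=\sum_{k=0}^{n-1}Q_3^k R_1 Q_1^{n-1-k},\ B_n=\sum_{k=0}^{n-1}Q_3^k R_2 Q_2^{n-1-k},
$$
so the conditional law of $(X_n,Y_n)$ given non-extinction is the renormalisation of the triple $(\delta_{(i,j)}A_n,\delta_{(i,j)}B_n,\delta_{(i,j)}Q_3^n)$, supported on type-$1$-only, type-$2$-only and $\cS^*$ respectively, by the scalar $\delta_{(i,j)}(A_n+B_n+Q_3^n)\mathbf{1}$. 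By aperiodicity, Perron--Frobenius provides, for each block, the quantitative decomposition $Q_i^n=\theta_i^n u_iv_i+E_i^{(n)}$ with $\|E_i^{(n)}\|=O(\alpha_i^n)$ for some $\alpha_i<\theta_i$, where $u_iv_i$ is the rank-one spectral projection (with $u_i$ column and $v_i$ row in matrix conventions).

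The main step is to extract the leading order of $A_n$ (and symmetrically $B_n$). Plugging the decomposition into the convolution sum gives a principal product-of-projections contribution plus error pieces controlled by geometric sums of the form $\sum_{k\leq n}\alpha^k\theta^{n-k}=O(\max(\alpha,\theta)^n)$. Depending on the comparison between $\theta_1$ and $\theta_3$ one obtains
$$
\theta_1^{-n}A_n\longrightarrow(\theta_1 I-Q_3)^{-1}R_1u_1v_1\text{ if }\theta_1>\theta_3,\ \ \theta_3^{-n}A_n\longrightarrow u_3v_3R_1(\theta_3 I-Q_1)^{-1}=u_3w_1\text{ if }\theta_3>\theta_1,
$$
while in the critical case the sum picks up an extra factor of $n$ and $(n\theta_1^{n-1})^{-1}A_n\to u_3v_3R_1u_1v_1$; of course $\theta_3^{-n}Q_3^n\to u_3v_3$. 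The five regimes of~(\ref{eq:Yaglom}) are then read off by comparing the exponential (and polynomial) growth rates of $A_n,B_n,Q_3^n$. In case~(i), $A_n$ alone dominates, its right factor being $v_1$, which yields $v_1\otimes\delta_0$ after normalisation using $v_1\mathbf{1}=1$; case~(ii) is symmetric. In case~(iii) the three terms all grow at rate $\theta_3^n$, share a common left factor $(u_3)_{(i,j)}$ that cancels in the ratio, and their right factors add to $v_3+w_1\otimes\delta_0+\delta_0\otimes w_2$ with normalisation $1+w_1\mathbf{1}+w_2\mathbf{1}$. In case~(iv), $Q_3^n$ is negligible and $A_n,B_n$ both contribute rate-$\theta_1^n$ terms with right factors $v_1$ and $v_2$ weighted by the $(i,j)$-dependent scalars $\delta_{(i,j)}(\theta_1 I-Q_3)^{-1}R_\ell u_\ell$, producing $p_{i,j}$. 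In case~(v), the $n\theta_1^{n-1}$ scale kills $Q_3^n$, while $A_n,B_n$ carry the common left factor $(u_3)_{(i,j)}$ and the scalars $v_3R_\ell u_\ell$; after cancellation of $(u_3)_{(i,j)}$ this produces the state-independent $q$.

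The main technical obstacle is the uniform control of the convolution error: pointwise Perron convergence would not suffice since the sum contains $O(n)$ terms, so one needs the exponential bound $\|E_i^{(n)}\|=O(\alpha_i^n)$ granted by the spectral gap under irreducibility and aperiodicity. With that in hand, all non-leading contributions are of strictly smaller polynomial-times-exponential order than the leading one in each of the five regimes, and the five limits follow by elementary manipulation together with the normalisations $v_i\mathbf{1}=1$.
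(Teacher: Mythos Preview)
Your proof is correct and follows essentially the same route as the paper's own argument: the block-triangular formula for $Q^n$, the Perron--Frobenius expansion $Q_i^n=\theta_i^n u_iv_i+O((\alpha\theta_i)^n)$, and the trichotomy for the convolution $A_n=\sum_k Q_3^kR_1Q_1^{n-1-k}$ (your $A_n,B_n$ are the paper's $R_1^{(n)},R_2^{(n)}$, and your asymptotics match Lemma~\ref{lem:Q^n} verbatim). The only point the paper states a bit more explicitly is that the inverses $(\theta_1 I-Q_3)^{-1}$ and $(\theta_3 I-Q_i)^{-1}$ have strictly positive entries, which together with $R_i\not=0$ guarantees that the leading coefficients (e.g.\ $\delta_{(i,j)}(\theta_1 I-Q_3)^{-1}R_1u_1$) are actually positive, so the identified ``dominant'' term really does dominate; you use this implicitly but it is worth making explicit.
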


To give an interpretation of this result, we will say that there is \textbf{extinction} of type $i$
conditionally on non-extinction if the QLD~(\ref{eq:Yaglom}) gives mass 0 to all states with positive $i$-th
coordinate. Conversely, we say that there is \textbf{coexistence} conditionally on non-extinction if the
QLD~(\ref{eq:Yaglom}) gives positive mass to the set ${\cal S}^*$. We also say that type 1 is
\textbf{stronger} than type 2 (or type 2 is \textbf{weaker} than type 1) if $\theta_1>\theta_2$, and
conversely if $\theta_2>\theta_1$.

Theorem~\ref{thm:Yaglom} says that the limit behaviour of the population conditionally on non-extinction
essentially depends on whether the largest eigenvalue of $Q$ is $\theta_1$, $\theta_2$ or $\theta_3$. If
either $\theta_1>\theta_2$ and $\theta_1\geq\theta_3$ or $\theta_2>\theta_1$ and $\theta_2\geq\theta_3$, the
QLD is the same as if there were no individual of the weaker type in the initial population, and there is
extinction of the weaker type conditionally on non-extinction. If $\theta_3>\theta_1,\theta_2$, there is
\textbf{coexistence} of both types conditionally on non-extinction. Finally, when $\theta_1=\theta_2$, both
types can survive under the QLD, so none of the types go extinct (according to the previous terminology), but
there is no coexistence, as one (random) type eventually goes extinct. Observe also that the case
$\theta_1=\theta_2>\theta_3$ is the only one where the QLD depends on the initial condition.

Note also that, in the case where $\theta_3\leq\max\{\theta_1,\theta_2\}$ and $\theta_1\not=\theta_2$, the QLD
does not depend on any further information about the matrix $Q_3$. In other words, if one knows \emph{a
  priori} that $\theta_3\leq\max\{\theta_1,\theta_2\}$ and $\theta_1\not=\theta_2$, the precise transition
probabilities of the Markov chain from any state in ${\cal S}^*$ have no influence on the QLD. The QLD is only
determined by the monotype chains of types 1 and 2.

Our next result says that, for any values of $\theta_1$, $\theta_2$ and $\theta_3$, all the QSDs of the Markov
chain are those given in the r.h.s.\ of~(\ref{eq:Yaglom}), when they exist and are nonnegative.

\begin{prop}
  \label{prop:QSD}  
  Under the same assumptions and notation as in Theorem~\ref{thm:Yaglom}, the set of QSDs of the Markov chain
  is composed of the probability measures $p\:v_1\otimes\delta_0+(1-p)\:\delta_0\otimes v_2$ for all
  $p\in[0,1]$, with the additional QSD
  \begin{equation}
    \label{eq:only-other-QSD}
    \frac{v_3 +w_1\otimes\delta_0
      +\delta_0\otimes w_2}{1+w_1\mathbf{1}+w_2\mathbf{1}}
  \end{equation}
  in the case where $\theta_3>\max\{\theta_1,\theta_2\}$.
\end{prop}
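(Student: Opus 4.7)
The plan is to characterise every QSD as a nonnegative probability left eigenvector of $Q$ and then exploit the block triangular structure of~(\ref{eq:Q-2-types}) together with Perron--Fr\"obenius applied to the three irreducible diagonal blocks. Recall that $\nu$ is a QSD if and only if there exists $\theta\in(0,1]$ such that $\nu Q=\theta\nu$. Writing $\nu=(\nu^{(1)},\nu^{(2)},\nu^{(3)})$ according to the partition of ${\cal S}\setminus\{(0,0)\}$ into $\{1,\ldots,N\}\times\{0\}$, $\{0\}\times\{1,\ldots,N\}$, and ${\cal S}^*$, this splits into
\begin{align*}
  \nu^{(1)}Q_1+\nu^{(3)}R_1 & =\theta\nu^{(1)},\\
  \nu^{(2)}Q_2+\nu^{(3)}R_2 & =\theta\nu^{(2)},\\
  \nu^{(3)}Q_3 & =\theta\nu^{(3)}.
\end{align*}
Applying Perron--Fr\"obenius to the irreducible matrix $Q_3$, either $\nu^{(3)}=0$, or $\theta=\theta_3$ and $\nu^{(3)}=c\,v_3$ for some $c>0$.

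In the first branch, the top two equations reduce to $\nu^{(i)}Q_i=\theta\nu^{(i)}$ for $i=1,2$, and Perron--Fr\"obenius forces $\nu^{(i)}=\alpha_i v_i$ with $\alpha_i\geq 0$, and $\theta=\theta_i$ whenever $\alpha_i>0$. Either $\theta_1\neq\theta_2$, in which case exactly one of $\alpha_1,\alpha_2$ is nonzero and $\nu$ equals $v_1\otimes\delta_0$ or $\delta_0\otimes v_2$, or $\theta_1=\theta_2$ and every convex combination $p\,v_1\otimes\delta_0+(1-p)\,\delta_0\otimes v_2$ with $p\in[0,1]$ is a QSD. In the second branch, with $\theta=\theta_3$ and $\nu^{(3)}=c\,v_3$ ($c>0$), the remaining equations become $\nu^{(i)}(\theta_3 I-Q_i)=c\,v_3 R_i$. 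When $\theta_3>\theta_i$ the Neumann series $(\theta_3 I-Q_i)^{-1}=\theta_3^{-1}\sum_{n\geq 0}(Q_i/\theta_3)^n$ converges and has nonnegative entries, so $\nu^{(i)}=c\,w_i$ with $w_i$ as in the statement; normalising total mass to~$1$ produces exactly the additional QSD~(\ref{eq:only-other-QSD}).

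The step I expect to require the most care is ruling out any further QSD when $\theta_3\leq\theta_i$ for some $i\in\{1,2\}$, since the naive inversion of $\theta_3 I-Q_i$ is no longer legitimate. For this I would pair the equation $\nu^{(i)}(\theta_3 I-Q_i)=c\,v_3 R_i$ on the right with the positive right Perron eigenvector $u_i$ of $Q_i$, obtaining
$$(\theta_3-\theta_i)\,\nu^{(i)}u_i=c\,v_3 R_i u_i.$$
The right-hand side is strictly positive, since $c>0$, $v_3>0$, $R_i\geq 0$ is nonzero, and $u_i>0$; combined with $\nu^{(i)}\geq 0$ and $u_i>0$, this forces $\theta_3>\theta_i$. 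Hence the nontrivial branch only produces a QSD in the regime $\theta_3>\max\{\theta_1,\theta_2\}$, and the enumeration matches the claim.
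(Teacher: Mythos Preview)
Your proposal is correct and follows essentially the same route as the paper: decompose $\nu Q=\theta\nu$ along the block structure, apply Perron--Fr\"obenius to $Q_3$ to force $\nu^{(3)}\in\{0,c\,v_3\}$, invert $\theta_3 I-Q_i$ via the Neumann series when $\theta_3>\theta_i$, and rule out $\theta_3\leq\theta_i$ by pairing with the right Perron vector $u_i$. The paper packages the invertibility step as a reference to its Lemma~\ref{lem:Q^n}, but the content is identical to your Neumann-series argument; your treatment of the $\nu^{(3)}=0$ branch is in fact slightly more careful than the paper's, since you note that genuine convex combinations arise only when $\theta_1=\theta_2$.
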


\begin{proof}
The fact that all the QSDs giving no mass to the set ${\cal S}^*$ are of the form
$p\:v_1\otimes\delta_0+(1-p)\:\delta_0\otimes v_2$ for some $p\in[0,1]$ is an immediate consequence of the
facts that the sets $\{1,\ldots,N\}\times\{0\}$ and $\{0\}\times\{1,\ldots,N\}$ do not communicate and the
only QSD of an irreducible and aperiodic Markov chain on a finite set is given by the only positive
normalized left eigenvector of the transition matrix of the Markov chain (cf.~\cite{darroch-seneta-65}).

Assume now that $\mu$ is a QSD for the Markov chain $(X_n,Y_n)_{n\geq 0}$ such that $\mu({\cal S}^*)>0$, and
write $\mu=(\mu_1,\mu_2,\mu_3)$, where $\mu_1$ (resp.\ $\mu_2$, resp.\ $\mu_3$) is the restriction of $\mu$ to
the set $\{1,\ldots,N\}\times\{0\}$ (resp.\ $\{0\}\times\{1,\ldots,N\}$, resp.\ ${\cal S}^*$). The equation
$\mu Q=\theta Q$ for some $\theta>0$, which characterizes QSDs, implies that $\mu_3$ is a nonnegative left
eigenvector for $Q_3$. Thus, by the Perron-Frobenius theorem, $\mu_3=a v_3$ for some $a>0$ and
$\theta=\theta_3$. Using again the formula $\mu Q=\theta_3 Q$, one necessarily has
\begin{equation}
  \label{eq:pf-only-QSD}
  \mu_i(\theta_3\mbox{Id}-Q_i)=a v_3 R_i,\quad i=1,2.  
\end{equation}

In the case where $\theta_3>\max\{\theta_1,\theta_2\}$, the matrices $\theta_3\mbox{Id}-Q_1$ and
$\theta_3\mbox{Id}-Q_1$ are invertible, as shown in Lemma~\ref{lem:Q^n} below. Thus $\mu$ is given
by~(\ref{eq:only-other-QSD}).

In the case where $\theta_3\leq\theta_i$ for $i=1$ or $2$, we deduce from~(\ref{eq:pf-only-QSD}) that
$(\theta_3-\theta_i)\mu_iu_i=av_3R_iu_i$. This is impossible since the l.h.s.\ of this formula is non-positive
and the r.h.s.\ is positive as $R_i\not=0$, $v_3>0$ and $u_i>0$.
\end{proof}

\paragraph{Proof of Theorem~\ref{thm:Yaglom}}
For all $(k,l)\in{\cal S}\setminus\{(0,0)\}$, we want to compute the limit of
\begin{equation}
  \label{eq:condit}
  \mathbb{P}_{(i,j)}[(X_n,Y_n)=(k,l)\mid(X_n,Y_n)\not=(0,0)]
  =\frac{Q^{(n)}_{(i,j),(k,l)}}{\sum_{(k',l')\not=(0,0)}Q^{(n)}_{(i,j),(k',l')}}
\end{equation}
as $n\rightarrow+\infty$, where $Q^{(n)}_{(i,j),(k,l)}$ denotes the element of $Q^n$ on the line corresponding
to state $(i,j)\in{\cal S}$ and the column corresponding to state $(k,l)\in{\cal S}$.

Therefore, we need to analyse the behaviour of $Q^n$ as $n\rightarrow+\infty$. We have by
induction
\begin{equation}
  \label{eq:Q^n}
  Q^n=\begin{pmatrix} Q_1^n & 0 & 0 \\ 0 & Q_2^n & 0 \\ R_1^{(n)} &
    R_2^{(n)} & Q_3^n \end{pmatrix}
\end{equation}
where
\begin{equation}
  \label{eq:def-R^(n)}
  R_i^{(n)}=\sum_{k=0}^{n-1}Q_3^kR_iQ_i^{n-1-k}, \quad i=1,2.
\end{equation}
By the Perron-Frobenius Theorem (see e.g.~\cite{gantmacher-86}),
\begin{equation}
  \label{eq:Perron-Frobenius}
  Q_i^n=\theta_i^nu_iv_i+O((\theta_i\alpha)^n\mathbf{1})
\end{equation}
for some $\alpha<1$ as $n\rightarrow+\infty$, where $\mathbf{1}$ denotes the square matrix of appropriate
dimension, whose entries are all equal to 1. We need the following result. Its proof is postponed at the end
of the subsection.
\begin{lem}
  \label{lem:Q^n}
  If $\theta_1>\theta_3$, the matrix $\theta_1\text{Id}-Q_3$ is invertible, its inverse has positive entries
  and
  \begin{equation}
    \label{eq:R^(n)-1}
    R_1^{(n)}\sim\theta_1^n(\theta_1\text{Id}-Q_3)^{-1}R_1u_1v_1
  \end{equation}
  as $n\rightarrow+\infty$. If $\theta_1<\theta_3$, the matrix $\theta_3\text{Id}-Q_1$ is invertible, its
  inverse has positive entries and
  \begin{equation}
    \label{eq:R^(n)-2}
    R_1^{(n)}\sim\theta_3^nu_3v_3R_1(\theta_3\text{Id}-Q_1)^{-1}
  \end{equation}
  as $n\rightarrow+\infty$. If $\theta_1=\theta_3$, as $n\rightarrow+\infty$,
  \begin{equation}
    \label{eq:R^(n)-3}
    R_1^{(n)}\sim n\theta_1^{n-1}u_3v_3R_1u_1v_1.
  \end{equation}
\end{lem}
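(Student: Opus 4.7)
}

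The plan is to treat the three regimes separately, after first handling the invertibility claims. For invertibility in case~1, I would note that $\theta_3$ is the spectral radius of $Q_3$ by Perron-Frobenius, so $\theta_1>\theta_3$ makes $\theta_1 I-Q_3$ invertible, with Neumann expansion
$$
(\theta_1 I-Q_3)^{-1}=\sum_{k\geq 0}\frac{Q_3^k}{\theta_1^{k+1}}.
$$
Each entry of this series is a nonnegative sum; irreducibility of $Q_3$ guarantees that for each pair of indices $(i,j)$, some power $Q_3^k$ has a positive $(i,j)$-entry, so the sum is strictly positive. The same argument, with the roles of $Q_1$ and $Q_3$ swapped, handles $\theta_3 I-Q_1$ in case~2.

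For the asymptotics, I would feed the Perron-Frobenius expansion~(\ref{eq:Perron-Frobenius}) into the definition~(\ref{eq:def-R^(n)}) of $R_1^{(n)}$ and rearrange geometric sums. In case~1 ($\theta_1>\theta_3$), expand only $Q_1^{n-1-k}$:
$$
R_1^{(n)}=\theta_1^{n-1}\left(\sum_{k=0}^{n-1}(Q_3/\theta_1)^k\right)R_1\,u_1v_1
+\sum_{k=0}^{n-1}Q_3^k R_1\,O\bigl((\theta_1\alpha)^{n-1-k}\bigr).
$$
The partial sums $\sum_{k=0}^{n-1}(Q_3/\theta_1)^k$ converge to $(I-Q_3/\theta_1)^{-1}=\theta_1(\theta_1 I-Q_3)^{-1}$, giving the claimed leading term $\theta_1^n(\theta_1 I-Q_3)^{-1}R_1 u_1 v_1$. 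The error is bounded by a geometric sum of the form $\sum_k \theta_3^k(\theta_1\alpha)^{n-1-k}$; choosing $\alpha<1$ close enough to $1$ so that $\theta_1\alpha>\theta_3$ makes the error $O((\theta_1\alpha)^{n-1})=o(\theta_1^n)$. Case~2 is symmetric: expand $Q_3^k$ instead, use $\sum_{j=0}^{n-1}(Q_1/\theta_3)^j\to\theta_3(\theta_3 I-Q_1)^{-1}$, and bound the error by $\sum_k(\theta_3\alpha)^k\theta_1^{n-1-k}$ with $\alpha$ chosen so $\theta_3\alpha>\theta_1$.

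Case~3 ($\theta_1=\theta_3=:\theta$) is the most interesting one. Here I would substitute the Perron-Frobenius expansion on \emph{both} sides of $R_1$, writing $Q_3^k R_1 Q_1^{n-1-k}$ as a sum of four contributions: the leading-leading term $\theta^{n-1}u_3v_3R_1u_1v_1$, two mixed terms (leading $\times$ error), and one error-error term. Summing the leading-leading term over $k=0,\dots,n-1$ yields exactly $n\theta^{n-1}u_3v_3R_1u_1v_1$. The mixed terms contribute sums of the form $\sum_{k=0}^{n-1}\theta^k(\theta\alpha)^{n-1-k}$, which equal $\theta^{n-1}\cdot\frac{1-\alpha^n}{1-\alpha}=O(\theta^{n-1})$, and the error-error term is $O(n(\theta\alpha)^{n-1})$; both are $o(n\theta^{n-1})$.

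The main obstacle I expect is bookkeeping for the error terms in the borderline case $\theta_1=\theta_3$, where the naive bound on each factor gives only $\theta^{n-1}$ and one must verify that the sum of these errors over $k$ grows at most like $\theta^{n-1}$, strictly slower than the $n\theta^{n-1}$ of the leading term. Choosing $\alpha<1$ (which is available since $\theta$ has multiplicity one for both $Q_1$ and $Q_3$ by the irreducibility and aperiodicity hypotheses) is what gives the geometric decay that makes the mixed sums $O(\theta^{n-1})$ rather than $O(n\theta^{n-1})$.
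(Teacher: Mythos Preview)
Your proposal is correct and follows essentially the same route as the paper: Neumann series for the invertibility and positivity, Perron--Frobenius expansion of one factor in the asymmetric cases (with a geometric-sum error estimate and a harmless adjustment of $\alpha$), and expansion of both factors in the critical case $\theta_1=\theta_3$, yielding the $n\theta^{n-1}$ leading term with $O(\theta^{n-1})$ mixed errors. The only cosmetic difference is that you force $\theta_1\alpha>\theta_3$ whereas the paper merely takes $\theta_1\alpha\neq\theta_3$ and keeps both terms $\theta_3^n$ and $(\theta_1\alpha)^n$ in the error; both are $o(\theta_1^n)$ so this changes nothing.
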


Theorem~\ref{thm:Yaglom} can be proved from this result as follows. Let $D$ denote the denominator
of~(\ref{eq:condit}). If $\theta_1,\theta_2>\theta_3$,~(\ref{eq:R^(n)-1})
and~(\ref{eq:Perron-Frobenius}) yield for all $(i,j)\in{\cal S}^*$
$$
D\sim\theta_1^n\delta_{(i,j)}(\theta_1\text{Id}-Q_3)^{-1}R_1u_1v_1\mathbf{1}
+\theta_2^n\delta_{(i,j)}(\theta_2\text{Id}-Q_3)^{-1}R_2u_2v_2\mathbf{1}
+\theta_3^n\delta_{(i,j)}u_3v_3\mathbf{1}
$$
as $n\rightarrow+\infty$. In the case when $\theta_1>\theta_2$, since $(\theta_1\text{Id}-Q_3)^{-1}$ has
positive entries, we have $D\sim\theta_1^n\delta_{(i,j)}(\theta_1\text{Id}-Q_3)^{-1}R_1u_1$. The limit
of~(\ref{eq:condit}) when $n\rightarrow+\infty$ then follows from~(\ref{eq:R^(n)-1}). The case
$\theta_2>\theta_1$ is treated similarly. In the case when $\theta_1=\theta_2$,
$$
D\sim\theta_1^n\delta_{(i,j)}\big[(\theta_1\text{Id}-Q_3)^{-1}R_1u_1+(\theta_2\text{Id}-Q_3)^{-1}R_2u_2\big],
$$
and the fourth line of~(\ref{eq:Yaglom}) follows from Lemma~\ref{lem:Q^n}.

In the case when $\theta_3>\theta_1,\theta_2$, we obtain
$$
D\sim\theta_3^n\delta_{(i,j)}u_3v_3\big[R_1(\theta_3\text{Id}-Q_1)^{-1}\mathbf{1}
+R_2(\theta_3\text{Id}-Q_2)^{-1}\mathbf{1}+\mathbf{1}\big],
$$
which implies the third line of~(\ref{eq:Yaglom}).

Similarly, it follows from~(\ref{eq:Perron-Frobenius}) and Lemma~\ref{lem:Q^n} that
\begin{equation*}
  \begin{array}{ll}
    D\sim\theta_1^n\delta_{(i,j)}(\theta_1\text{Id}-Q_3)^{-1}R_1u_1 & \text{if\ }\theta_1>\theta_3>\theta_2,
    \\ D\sim n\theta_1^{n-1}\delta_{(i,j)}u_3v_3R_1u_1 & \text{if\ }\theta_1=\theta_3>\theta_2, \\
    D\sim n\theta_1^{n-1}\delta_{(i,j)}u_3v_3(R_1u_1+R_2u_2) & \text{if\ }\theta_1=\theta_2=\theta_3.
  \end{array}
\end{equation*}
The proof is easily completed in each of these cases.\hfill$\Box$\bigskip

\paragraph{Proof of Lemma~\ref{lem:Q^n}}
Assume first that $\theta_1>\theta_3$. One easily checks that
\begin{equation}
  \label{eq:step-1}
  (\theta_1\text{Id}-Q_3)\sum_{k=0}^{n-1}\theta_1^{-k}Q_3^{k}=\theta_1\text{Id}-\theta_1^{-n+1}Q_3^n.  
\end{equation}
Because of~(\ref{eq:Perron-Frobenius}), the series in the previous equation converges when
$n\rightarrow+\infty$. Therefore, $\theta_1\text{Id}-Q_3$ is invertible and
$$
(\theta_1\text{Id}-Q_3)^{-1}=\sum_{n\geq 0}\theta_1^{-n-1}Q_3^n,
$$
which has positive entries since $Q_3$ is irreducible. Therefore, it follows from~(\ref{eq:Perron-Frobenius})
and~(\ref{eq:step-1}) that
\begin{align*}
  R_1^{(n)} & =\sum_{k=0}^{n-1}Q_3^kR_1\big[\theta_1^{n-1-k}u_1v_1+O((\theta_1\alpha)^{n-1-k}\mathbf{1})\big]
  \\ & =\theta_1^{n-1}(\theta_1\text{Id}-\theta_1^{-n+1}Q_3^n)(\theta_1\text{Id}-Q_3)^{-1}R_1u_1v_1
  +O\left(u_3v_3R_1\mathbf{1}\sum_{k=0}^{n-1}\theta_3^k(\theta_1\alpha)^{n-1-k}\right) \\ 
  & =\theta_1^{n}(\theta_1\text{Id}-Q_3)^{-1}R_1u_1v_1+O(\theta_3^n\mathbf{1})
  +O\left(\left(\theta_3^n+(\theta_1\alpha)^n\right)\mathbf{1}\right),
\end{align*}
where we used the fact that $\alpha$ may be increased without loss of generality so that
$\theta_3\not=\theta_1\alpha$, in which case
$$
\sum_{k=0}^{n-1}\theta_3^k(\theta_1\alpha)^{n-1-k}=\frac{\theta_3^n-(\theta_1\alpha)^n}{\theta_3-\theta_1\alpha}.
$$
Since $R_1\not=0$ has nonnegative entries and $(\theta_1\text{Id}-Q_3)^{-1}$ and $u_1v_1$ have positive
entries, the matrix $(\theta_1\text{Id}-Q_3)^{-1}R_1u_1v_1$ also has positive entries, and~(\ref{eq:R^(n)-1})
follows. The case $\theta_3>\theta_1$ can be handled similarly.

Assume finally that $\theta_1=\theta_3$. By~(\ref{eq:Perron-Frobenius}),
\begin{align*}
  R^{(n)}_1 & =\sum_{k=0}^{n-1}\left(\theta_3^ku_3v_3+O((\theta_3\alpha)^k\mathbf{1})\right)R_1
  \left(\theta_1^{n-k-1}u_1v_1+O((\theta_1\alpha)^{n-k-1}\mathbf{1})\right) \\
  & =n\theta_1^{n-1}u_3v_3R_1u_1v_1+O\left(\frac{\theta_1^{n-1}}{1-\alpha}
    \big(\mathbf{1}R_1u_1v_1+u_3v_3R_1\mathbf{1}\big)\right)+O(n(\alpha\theta_1)^{n-1}\mathbf{1}),
\end{align*}
which ends the proof of Lemma~\ref{lem:Q^n}.\hfill$\Box$\bigskip

\subsection{The nearly neutral case}
\label{sec:Yaglom-neutral}

Since $\Pi$ in~(\ref{eq:trans}) is a block triangular matrix, we have
$$
\text{Sp}'(\Pi)=\{1\}\cup\text{Sp}'(Q_1)\cup\text{Sp}'(Q_2)\cup\text{Sp}'(Q_3),
$$
where $\text{Sp}'(A)$ denotes the spectrum of the matrix $A$, where eigenvalues are counted with their
multiplicity.

In the case of a N2dMC satisfying the assumptions of Theorem~\ref{thm:Yaglom}, with the notation of
Section~\ref{sec:diago}, we have $Q_1=Q_2=\wi{\Pi}_0$. By Theorem~\ref{thm:diago-finite}~(b) and
Remark~\ref{rem:P_1}, $\{1\}\cup\text{Sp}'(Q_1)\cup\text{Sp}'(Q_2)$ is the set of eigenvalues corresponding to
right eigenvectors of $\Pi$ of the form $P_0(i,j)u_{i+j}$ and $P_1(i,j)u_{i+j}$, counted with their
multiplicity. More precisely, $\text{Sp}'(Q_1)$ corresponds to eigenvectors of the form
$P^{(1)}_1(i,j)u_{i+j}$, and $\text{Sp}'(Q_2)$ to eigenvectors of the form $P^{(2)}_1(i,j)u_{i+j}$. In
particular, $\theta_1=\theta_2=\theta^{(1)}=\theta^D_1$, with the notation of
Theorem~\ref{thm:Dirichlet-finite}. Moreover, since $Q_3=\Pi^{(2)}$, Theorem~\ref{thm:Dirichlet-finite} shows
that $\theta_3=\theta^{(2)}=\theta^D_2<\theta_1=\theta_2$ and $\text{Sp}'(Q_3)$ is the set of eigenvalues
corresponding to right eigenvectors of $\Pi$ of the form $P_d(i,j)u_{i+j}$ for $d\geq 2$, counted with their
multiplicity.

In other words, with the terminology defined after Theorem~\ref{thm:Yaglom}, \textbf{coexistence is impossible
  in the neutral case}. Since the eigenvalues of $Q_1,\ Q_2$ and $Q_3$ depend continuously on the entries of
these matrices, we deduce that \textbf{coexistence is impossible in the neighborhood of neutrality}:

\begin{cor}
  \label{cor:nearly-neutral}
  Let $\Pi$ be the transition matrix of some fixed N2dMC in ${\cal T}_N$ such that $\wi{\Pi}_0$ and
  $\wi{\Pi}_1$ are both irreducible and there exists $i\in\{1,\ldots,N\}$ such that $p_{i,0}>0$. For any
  A2dMC $(X,Y)$ in ${\cal T}_N$ with transition matrix $\Pi'$ sufficiently close to $\Pi$, coexistence is
  impossible in the QLD of $(X,Y)$. Let $\theta'_1,\theta'_2,\theta'_3$ denote the eigenvalues
  $\theta_1,\theta_2,\theta_3$ of Theorem~\ref{thm:Yaglom} corresponding to the matrix $\Pi'$. If
  $\theta'_1\not=\theta'_2$, the QLD of $(X,Y)$ is the trivial QSD corresponding to the stronger type: if
  $\theta'_1>\theta'_2$, the QLD of $(X,Y)$ is $v'_1\otimes\delta_0$, where $v'_1$ is the QLD of $(X,0)$, and
  if $\theta'_2>\theta'_1$, the QLD of $(X,Y)$ is $\delta_0\otimes v'_2$, where $v'_2$ is the QLD of $(0,Y)$.
\end{cor}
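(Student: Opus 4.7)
The plan is to combine the spectral description of the neutral case (established just before the corollary) with the Yaglom-type classification in Theorem~\ref{thm:Yaglom}, via a continuity-of-eigenvalues argument. The key structural fact is the strict gap $\theta_1=\theta_2>\theta_3$ in the neutral case; I then show that this strict inequality persists under small perturbations, so that one is always in one of the three ``non-coexistence'' cases of~(\ref{eq:Yaglom}).

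First, recall from the discussion preceding the corollary that, for the fixed neutral matrix $\Pi$, the blocks satisfy $Q_1=Q_2=\wi{\Pi}_0$ and $Q_3=\wit{\Pi}_2=\wi{\Pi}$. Hence $\theta_1=\theta_2=\theta^{(1)}=\theta^D_1$ and $\theta_3=\theta^{(2)}=\theta^D_2$. Under the irreducibility hypotheses on $\wi{\Pi}_0$ and $\wi{\Pi}_1$, Theorem~\ref{thm:Dirichlet-finite}~(b) applied with $k=1$ yields the strict inequality $\theta^{(1)}>\theta^{(2)}$, i.e.\ $\theta_1=\theta_2>\theta_3$. The assumption that $p_{i,0}>0$ for some $i$ gives $\theta^{(1)}<1$, which is not used in what follows but secures consistency with the Yaglom theorem hypotheses.

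Second, I would argue that for $\Pi'$ close enough to $\Pi$ the hypotheses of Theorem~\ref{thm:Yaglom} still hold for $\Pi'$. The A2dMC block form~(\ref{eq:trans}--\ref{eq:Q-2-types}) is assumed, so only the irreducibility and aperiodicity of $Q'_1,Q'_2,Q'_3$ and the non-vanishing of $R'_1,R'_2$ need be verified; these are open conditions in the entries of $\Pi'$ (positive entries remain positive under small perturbations, and both irreducibility and aperiodicity are stable in this sense), so they are inherited from $\Pi$. By continuity of the spectral radius of a non-negative matrix with respect to its entries (an elementary consequence of the Perron--Frobenius characterization), $\theta'_i\to\theta_i$ as $\Pi'\to\Pi$ for $i=1,2,3$. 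Combined with the strict gap $\theta_1=\theta_2>\theta_3$, this gives $\max(\theta'_1,\theta'_2)>\theta'_3$ for $\Pi'$ sufficiently close to $\Pi$.

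Third, I would conclude by running through the cases of~(\ref{eq:Yaglom}). Since $\theta'_3<\max(\theta'_1,\theta'_2)$, only three cases can occur: if $\theta'_1>\theta'_2$ (which forces $\theta'_1>\theta'_3$) the QLD is $v'_1\otimes\delta_0$; if $\theta'_2>\theta'_1$ it is $\delta_0\otimes v'_2$; and if $\theta'_1=\theta'_2$ we are in the fourth line of~(\ref{eq:Yaglom}), giving $p_{i,j}\,v'_1\otimes\delta_0+(1-p_{i,j})\,\delta_0\otimes v'_2$. In all three cases the limiting distribution charges only $(\{1,\ldots,N\}\times\{0\})\cup(\{0\}\times\{1,\ldots,N\})$, so coexistence is impossible; moreover in the first two cases the QLD is independent of the initial state and equals the corresponding trivial QSD, which is exactly the statement of the corollary. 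The only ``delicate'' point in the whole argument is the transfer of irreducibility/aperiodicity to $\Pi'$; the rest is just continuity of eigenvalues plus a direct appeal to Theorem~\ref{thm:Yaglom}.
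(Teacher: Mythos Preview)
Your proof is correct and follows essentially the same approach as the paper: establish the strict gap $\theta_1=\theta_2>\theta_3$ for the neutral chain via Theorem~\ref{thm:Dirichlet-finite}, invoke continuity of the Perron--Frobenius eigenvalues in the matrix entries to propagate the inequality $\max(\theta'_1,\theta'_2)>\theta'_3$ to nearby A2dMC, and read off the conclusion from the case analysis in Theorem~\ref{thm:Yaglom}. The paper's argument is in fact just the one-sentence remark preceding the corollary; you have simply made the steps explicit, including the (routine) verification that the hypotheses of Theorem~\ref{thm:Yaglom} persist under perturbation.
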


\appendix

\section{Notations}
\label{sec:nota}

We gather here all the notations used at several places in the paper. Most of these notations are introduced
in Sections~\ref{sec:diago} and~\ref{sec:Dirichlet}.

\subsection{General definitions}

\begin{itemize}
\item For any measurable subset $\Gamma$ of $\RR^d$ and any $\sigma$-finite positive measure $\mu$ on
  $\Gamma$, $\LL^2(\Gamma,\mu)$ is the set of Borel functions $f$ on $\Gamma$ defined up to $\mu$-negligible
  set such that $\int_{\Gamma}f^2d\mu<+\infty$. We denote by $\langle\cdot,\cdot\rangle_\mu$ the canonical
  inner product on $\LL^2(\Gamma,\mu)$ and $\|\cdot\|_\mu$ the associated norm. In the case when $\Gamma$ is
  discrete, we make the usual abuse of notation to identify the measure $\mu$ and the corresponding function
  on $\Gamma$.
\item For all $I\times I$ square matrix $M$, where $I$ is a finite or denumerable set of indices, and for all
  $J\subset I$, we call ``restriction of the matrix $M$ to $J$'' the matrix obtained from $M$ by removing all
  rows and columns corresponding to indices in $I\setminus J$.
\end{itemize}

\subsection{Polynomials}

$H_2(X),H_3(X),\ldots$ are defined in Proposition~\ref{prop:poly-ortho}. \\
$P_0(X,Y)=1$.\\
$P_1^{(1)}(X,Y)=P_1(X,Y)=X$.\\
$P_1^{(2)}(X,Y)=Y$.\\
$P_2(X,Y),P_3(X,Y),\ldots$ are defined in Theorem~\ref{thm:poly}.

\subsection{Sets}

$\ZZ_+=\{0,1,\ldots\}$.\\
$\NN=\{1,2,\ldots\}$.\\
$\NN_d=\{d,d+1,\ldots\}$.\\
${\cal T}_N=\{(i,j)\in\ZZ^2_+:i+j\leq N\}$, where $N\geq 0$ is fixed below.\\
${\cal T}_N^*={\cal T}_N\cap\NN^2$.

\bigskip
\noindent\begin{minipage}[h]{0.5\linewidth}
  \begin{center}
    {\bf Finite case}
  \end{center}
  ${\cal S}_Z=\{0,1,\ldots,N\}$.\\
  ${\cal S}_Z^*=\{1,2,\ldots,N\}$.\\
  ${\cal S}={\cal T}_N$.\\
  ${\cal S}^*={\cal T}_N^*$.\\
  ${\cal S}^*_k=\{(i,j)\in\ZZ^2_+:k\leq i+j\leq N\}$, for all $k\geq 2$.\\
  ${\cal S}^*_1={\cal T}_N\cap(\NN\times\ZZ_+)$.
\end{minipage}\hspace{5mm}
\begin{minipage}[h]{0.5\linewidth}
  \begin{center}
    {\bf Infinite case}
  \end{center}
  ${\cal S}_Z=\ZZ_+$.\\
  ${\cal S}_Z^*=\NN$.\\
  ${\cal S}=\ZZ_+^2$.\\
  ${\cal S}^*=\NN^2$.\\
  ${\cal S}^*_k=\{(i,j)\in\ZZ^2_+:k\leq i+j\}$, for all $k\geq 2$.\\
  ${\cal S}^*_1=\NN\times\ZZ_+$.
\end{minipage}

\subsection{Matrices}

$\Pi_0=(p_{n,m})_{n,m\in{\cal S}_Z}$ is a stochastic matrix such that $p_{0,0}=1$.\\
$\wi\Pi_{k}$ is the restriction of $\Pi_0$ to the set of indices ${\cal S}_Z\cap\NN_{k+1}$, for all $k\geq
0$.\\
$\text{Pr}_N$ is the projection operator on $\RR^\NN$ defined by
$\text{Pr}_N(u_1,u_2,\ldots)=(u_1,\ldots,u_N,0,\ldots)$.\\
$\wi\Pi_0^{(N)}=\text{Pr}_N\wi\Pi_0\text{Pr}_N$, in the infinite case (i.e.\ when ${\cal S}_Z=\ZZ_+$).\\
$\Pi_0^{(N)}$ is the Markovian kernel on $\ZZ_+$ whose restriction to $\NN$ is $\wi\Pi_0^{(N)}$.\\
$\wit\Pi_0^{(N)}$ is the restriction of $\wi\Pi_0^{(N)}$ to $\{1,\ldots,N\}$.\\
\medskip
$\Pi=(\pi_{(i,j),(k,l)})_{(i,j),(k,l)\in{\cal S}}$, where\\
$\displaystyle{
\pi_{(i,j),(i+k,j+l)}=
\begin{cases}
  \displaystyle{\frac{\binom{i+k-1}{k}\binom{j+l-1}{l}}{\binom{i+j+k+l-1}{k+l}}\:p_{i+j,\:i+j+k+l}} &
  \text{if\ }(k,l)\in\ZZ_+^2\setminus\{0\}, \\
  \displaystyle{\frac{\binom{i}{k}\binom{j}{l}}{\binom{i+j}{k+l}}\:p_{i+j,\:i+j-k-l}} & \text{if\
  }(-k,-l)\in\ZZ_+^2,\\
  0 & \text{otherwise,}
\end{cases}
}$\\
with the convention that $\binom{i}{j}=0$ if $i<0$, $j<0$ or $j>i$.\\
$\wi\Pi$ is the restriction of $\Pi$ to ${\cal S}^*$.\\
$\wit\Pi_k$ is the restriction of the matrix $\Pi$ to ${\cal S}_k^*$, for all $k\geq 1$.\\
$\wi\Pi^{(N)}$ is constructed from $\Pi_0^{(N)}$ exactly as $\wi\Pi$ is defined from $\Pi_0$.\\
$\wit\Pi^{(N)}$ is the restriction of $\wi\Pi^{(N)}$ to ${\cal T}_N^*$.\\
$\Pi_d=(p^{(d)}_{n,m})_{(n,m)\in{\cal S},\,n,m\geq d}$ for all $d\geq 0$, where for all $(n,m)\in{\cal S}$,
$n,m\geq d$,\\[2mm]
$\displaystyle{p^{(d)}_{n,m}=
  \begin{cases}
    \displaystyle{\frac{\binom{m+d-1}{m-n}}{\binom{m-1}{m-n}}\:p_{n,m}} & \text{if\ }m>n, \\
    \displaystyle{\frac{\binom{n-d}{n-m}}{\binom{n}{n-m}}\:p_{n,m}} & \text{if\ }m<n, \\
    p_{n,n} & \text{if\ }m=n.
  \end{cases}
}$\bigskip\\
{\bf For A2dMC} (see Section~\ref{sec:Yaglom-general})\\
$\displaystyle{\Pi=\begin{pmatrix} 1 & 0 \\ r & Q \end{pmatrix}}$, where
$\displaystyle{Q=\begin{pmatrix} Q_1 & 0 & 0 \\ 0 & Q_2 & 0 \\ R_1 & R_2 & Q_3 \end{pmatrix}}$.

\subsection{Measures and vectors}

$\mu=(\mu_i)_{i\in{\cal S}^*_Z}$ is a reversible measure for the matrix $\wi\Pi_0$.\\
$\nu=(\nu_{(i,j)})_{(i,j)\in{\cal S}^*}$, where $\displaystyle{\nu_{(i,j)}=\frac{(i+j)\mu_{i+j}}{ij}}$.\\
$\mu^{(d)}=(\mu^{(d)}_n)_{n\in{\cal S}_Z\cap\NN_d}$ for all $d\geq 1$, where\\
$\displaystyle{\mu_n^{(d)}= 2\,n\,\binom{n+d-1}{2d-1}\,\mu_n}$ for all $n\in{\cal
  S}_Z\cap\NN_d$.\\
$g^{(d)}=(g^{(d)}_i)_{i\in{\cal S}_Z\cap\NN_{d+1}}$ for all $d\geq 0$, where\\
$\displaystyle{g^{(0)}_i=2i^2}$ for all $i\in{\cal S}_Z^*$ and\\
$\displaystyle{g^{(d)}_i=\frac{(i-d)(i+d)}{2d(2d+1)}}$ for all $d\geq 1$ and $i\in{\cal S}_Z\cap\NN_{d+1}$.\\
$h^{(d)}=(h^{(d)}_{i,j})_{(i,j)\in{\cal S},\,i,j\geq d+1}$ for all $d\geq 0$, where\\
$\displaystyle{h^{(0)}_{i,j}=\frac{j}{i}}$ for all $(i,j)\in{\cal S}^*$ and\\[2mm]
$\displaystyle{h^{(d)}_{i,j}=
  \begin{cases}
    \displaystyle{\frac{j+d}{i+d} }& \text{if\ }j>i, \\
    \displaystyle{\frac{j-d}{i-d}} & \text{if\ }j<i, \\
    1 & \text{if\ }i=j.
  \end{cases}
}$ $\ $ for all $d\geq 1$ and $(i,j)\in{\cal S}$, $i,j\geq d+1$.

\subsection{Operator norms (in the infinite, reversible case)}

$\vvv\cdot\vvv_0$ is the natural operator norm on the set of bounded operators on $\LL^2(\NN,\mu)$.\\
$\vvv\cdot\vvv$ is the natural operator norm on the set of bounded operators on $\LL^2(\NN^2,\nu)$.\\
$\vvv\cdot\vvv_d$ is the natural operator norm on the set of bounded operators on $\LL^2(\NN_d,\mu^{(d)})$,
for all $d\geq 1$.

\subsection{Eigenvalues}

We refer to Sections~\ref{sec:finite-2} and~\ref{sec:infinite-2} for precise definitions in the finite and
infinite cases.
\bigskip

\noindent $\theta^D_k$ is the biggest Dirichlet eigenvalue of $\Pi$ in ${\cal S}^*_k$, for all $k\geq 1$.\\
$\theta^{(d)}$ is the biggest eigenvalue of $\Pi$ corresponding to right eigenvectors of the form
$P_d(i,j)u_{i+j}$, for all $d\geq 0$.\bigskip\\
{\bf For A2dMC} (see Section~\ref{sec:Yaglom-general})\\
$\theta_1$, $\theta_2$ and $\theta_3$ are the Perron-Fr\"obenius eigenvalues of $Q_1$, $Q_2$ and $Q_3$,
respectively.

\subsection{Vector spaces}

${\cal V}_d:=\{v\in\RR^{\cal S}:v_{i,j}=P_d(i,j)u_{i+j}\text{\ with\ }u\in\RR^{{\cal S}_Z}\}$.\bigskip\\
{\bf Finite case}\\
${\cal U}_k=\{v\in\RR^{{\cal T}_N}:v=0\text{\ on\ }{\cal T}_N\setminus{\cal S}^*_k\}$ for all $k\geq 1$.\bigskip\\
{\bf Infinite, reversible case}\\
${\cal U}_k=\{v\in\LL^2(\ZZ_+^2,\nu):v=0\text{\ on\ }\ZZ_+^2\setminus{\cal S}^*_k\}$ for all $k\geq 2$.\\
${\cal U}_1=\Big\{v\in\RR^{\ZZ_+^2}:v_{i,j}=\frac{i}{i+j}v^{(1)}_{i+j}+v^{(3)}_{i,j},\
v^{(1)}\in\LL^2(\NN,\mu),\ v^{(3)}\in\LL^2(\NN^2,\nu)\Big\}$.\\
${\cal V}'_d=\left\{v\in\LL^2(\NN^2,\nu):v_{i,j}=P_d(i,j)u_{i+j}\text{\ with\
  }u\in\LL^2(\NN_d,\mu^{(d)})\right\}$ for all $d\geq 2$.

\bigskip

\noindent{\bf Acknowledgments:} We are grateful to Pierre-Emmanuel Jabin for helpful
discussions on the continuous case of Section~\ref{sec:cont} and to Bob Griffiths for his
comments on a preliminary version of this work and for pointing out the
reference~\cite{karlin-mcgregor-75}.\\ Nicolas Champagnat's research was partly supported
by project MANEGE `Mod\`eles Al\'eatoires en \'Ecologie, G\'en\'etique et \'Evolution'
09-BLAN-0215 of ANR (French national research agency).

\bibliographystyle{plain}
\bibliography{biblio-bio,biblio-math}

\def\cprime{$'$}
\begin{thebibliography}{10}

\bibitem{brezis}
Ha{\"{\i}}m Brezis.
\newblock {\em Analyse fonctionnelle}.
\newblock Collection Math\'ematiques Appliqu\'ees pour la Ma\^\i trise.
  [Collection of Applied Mathematics for the Master's Degree]. Masson, Paris,
  1983.
\newblock Th{\'e}orie et applications. [Theory and applications].

\bibitem{cattiaux-collet-al-09}
Patrick Cattiaux, Pierre Collet, Amaury Lambert, Servet Mart{\'{\i}}nez, Sylvie
  M{\'e}l{\'e}ard, and Jaime San~Mart{\'{\i}}n.
\newblock Quasi-stationary distributions and diffusion models in population
  dynamics.
\newblock {\em Ann. Probab.}, 37(5):1926--1969, 2009.

\bibitem{cattiaux-meleard-10}
Patrick Cattiaux and Sylvie M{\'e}l{\'e}ard.
\newblock Competitive or weak cooperative stochastic lotka-volterra systems
  conditioned on non-extinction.
\newblock {\em J. Math. Biol.}, 2010.
\newblock To appear.

\bibitem{cavender-78}
James~A. Cavender.
\newblock Quasi-stationary distributions of birth-and-death processes.
\newblock {\em Adv. Appl. Probab.}, 10(3):570--586, 1978.

\bibitem{champagnat-lambert-07}
Nicolas Champagnat and Amaury Lambert.
\newblock Evolution of discrete populations and the canonical diffusion of
  adaptive dynamics.
\newblock {\em Ann. Appl. Probab.}, 17(1):102--155, 2007.

\bibitem{darroch-seneta-65}
J.~N. Darroch and E.~Seneta.
\newblock On quasi-stationary distributions in absorbing discrete-time finite
  {M}arkov chains.
\newblock {\em J. Appl. Probab.}, 2:88--100, 1965.

\bibitem{darroch-seneta-67}
J.~N. Darroch and E.~Seneta.
\newblock On quasi-stationary distributions in absorbing continuous-time finite
  {M}arkov chains.
\newblock {\em J. Appl. Probab.}, 4:192--196, 1967.

\bibitem{dunkl-xu-01}
Charles~F. Dunkl and Yuan Xu.
\newblock {\em Orthogonal polynomials of several variables}, volume~81 of {\em
  Encyclopedia of Mathematics and its Applications}.
\newblock Cambridge University Press, Cambridge, 2001.

\bibitem{ferrari-martinez-al-92}
Pablo~A. Ferrari, Servet Mart{\'{\i}}nez, and Pierre Picco.
\newblock Existence of nontrivial quasi-stationary distributions in the
  birth-death chain.
\newblock {\em Adv. Appl. Probab.}, 24(4):795--813, 1992.

\bibitem{flaspohler-74}
David~C. Flaspohler.
\newblock Quasi-stationary distributions for absorbing continuous-time
  denumerable {M}arkov chains.
\newblock {\em Ann. Inst. Statist. Math.}, 26:351--356, 1974.

\bibitem{gantmacher-86}
F.~R. Gantmacher.
\newblock {\em Matrizentheorie}.
\newblock Hochschulb\"ucher f\"ur Mathematik [University Books for
  Mathematics], 86. VEB Deutscher Verlag der Wissenschaften, Berlin, 1986.
\newblock With a foreword by D.\ P.\ \v Zelobenko, Translated from the Russian
  by Helmut Boseck, Dietmar Soyka and Klaus Stengert.

\bibitem{gosselin-01}
Fr{\'e}d{\'e}ric Gosselin.
\newblock Asymptotic behavior of absorbing {M}arkov chains conditional on
  nonabsorption for applications in conservation biology.
\newblock {\em Ann. Appl. Probab.}, 11(1):261--284, 2001.

\bibitem{hognas-97}
G{\"o}ran H{\"o}gn{\"a}s.
\newblock On the quasi-stationary distribution of a stochastic {R}icker model.
\newblock {\em Stoch. Process. Appl.}, 70(2):243--263, 1997.

\bibitem{karlin-mcgregor-75}
S.~Karlin and J.~McGregor.
\newblock Linear growth models with many types and multidimensional {H}ahn
  polynomials.
\newblock In {\em Theory and application of special functions ({P}roc.
  {A}dvanced {S}em., {M}ath. {R}es. {C}enter, {U}niv. {W}isconsin, {M}adison,
  {W}is., 1975)}, pages 261--288. Math. Res. Center, Univ. Wisconsin, Publ. No.
  35. Academic Press, New York, 1975.

\bibitem{kesten-95}
Harry Kesten.
\newblock A ratio limit theorem for (sub) {M}arkov chains on {$\{1,2,\cdots\}$}
  with bounded jumps.
\newblock {\em Adv. in Appl. Probab.}, 27(3):652--691, 1995.

\bibitem{kijima-seneta-91}
Masaaki Kijima and E.~Seneta.
\newblock Some results for quasi-stationary distributions of birth-death
  processes.
\newblock {\em J. Appl. Probab.}, 28(3):503--511, 1991.

\bibitem{kimura-83}
M.~Kimura.
\newblock {\em The Neutral Theory of Molecular Evolution}.
\newblock Cambridge University Press, 1983.

\bibitem{murray-93}
J.~D. Murray.
\newblock {\em Mathematical Biology}.
\newblock Springer-Verlag, Berlin, second edition edition, 1993.

\bibitem{nasell-99}
Ingemar N{\aa}sell.
\newblock On the quasi-stationary distribution of the stochastic logistic
  epidemic.
\newblock {\em Math. Biosci.}, 156(1-2):21--40, 1999.
\newblock Epidemiology, cellular automata, and evolution (Sofia, 1997).

\bibitem{pollett-11}
P.~K. Pollett.
\newblock Quasi-stationary distributions: A bibliography.
\newblock available on http://www.maths.uq.edu.au/~pkp/papers/qsds/, 2011.

\bibitem{seneta-verejones-66}
E.~Seneta and D.~Vere-Jones.
\newblock On quasi-stationary distributions in discrete-time {M}arkov chains
  with a denumerable infinity of states.
\newblock {\em J. Appl. Probab.}, 3:403--434, 1966.

\bibitem{vandoorn-91}
Erik~A. van Doorn.
\newblock Quasi-stationary distributions and convergence to quasi-stationarity
  of birth-death processes.
\newblock {\em Adv. Appl. Probab.}, 23(4):683--700, 1991.

\bibitem{zettl-05}
Anton Zettl.
\newblock {\em Sturm-{L}iouville theory}, volume 121 of {\em Mathematical
  Surveys and Monographs}.
\newblock American Mathematical Society, Providence, RI, 2005.

\end{thebibliography}

\end{document}